\def\thetitle{On the trace of random walks on random graphs}
\definecolor{CombinatoricaAqua}{HTML}{00698C}
\definecolor{CombinatoricaBlue}{HTML}{3A3293}
\definecolor{CombinatoricaBrown}{HTML}{66220C}
\definecolor{CombinatoricaRed}{HTML}{DF2A27}
\definecolor{HarvardCrimson}{rgb}{0.6471, 0.1098, 0.1882}
\declaretheorem[name=Theorem]{ourthm}
\declaretheorem[name=Corollary,sibling=ourthm]{ourcor}
\declaretheorem[parent=section]{theorem}
\declaretheorem[sibling=theorem]{lemma}
\declaretheorem[sibling=theorem]{corollary}
\declaretheorem[sibling=theorem]{claim}
\declaretheorem[sibling=theorem]{definition}
\declaretheorem[style=definition,numbered=no]{acknowledgement}
\renewcommand{\PrintNames@a}[4]{%
  \PrintSeries{\name}
    {#1}
    {}{ and \set@othername}
    {,}{ \set@othername}
    {}{ and \set@othername}
    {#2}{#4}{#3}%
}
\title{\thetitle}
\author{Alan Frieze
  \thanks{Department of Mathematical Sciences, Carnegie Mellon University,
    Pittsburgh PA15213, USA.  Email: {\tt alan@random.math.cmu.edu}.
    Research supported in part by NSF Grant ccf0502793.}
\and Michael Krivelevich
\thanks{School of Mathematical Sciences, Raymond and Beverly Sackler Faculty
  of Exact Sciences, Tel Aviv University, Tel Aviv 6997801, Israel. E-mail:
  {\tt krivelev@post.tau.ac.il}. Research supported in part by a USA-Israel
BSF Grant and by a grant from Israel Science Foundation.}
\and Peleg Michaeli
\thanks{School of Mathematical Sciences, Raymond and Beverly Sackler Faculty
  of Exact Sciences, Tel Aviv University, Tel Aviv 6997801, Israel. E-mail:
  {\tt peleg.michaeli@math.tau.ac.il}.}
\and Ron Peled
\thanks{School of Mathematical Sciences, Raymond and Beverly Sackler Faculty
  of Exact Sciences, Tel Aviv University, Tel Aviv 6997801, Israel. E-mail:
  {\tt peledron@post.tau.ac.il}.  Research supported in part by an IRG grant
and by a grant from Israel Science Foundation.}}
\newcommand{\lln}[1]{\ln{\ln{#1}}}
\newcommand{\floor}[1]{\left\lfloor{#1}\right\rfloor}
\newcommand{\ceil}[1]{\left\lceil{#1}\right\rceil}
\newcommand{\gnm}[0]{G(n,m)}
\newcommand{\gnp}[0]{G(n,p)}
\newcommand{\Small}[0]{\mathrm{SMALL}}
\DeclareMathOperator{\bin}{Bin}
\DeclareMathOperator{\hypg}{Hypergeometric}
\DeclareMathOperator{\unif}{Unif}
\newcommand{\whp}[0]{\textbf{whp}}
\newcommand{\dtv}[2]{d_{\mathrm{TV}}(#1,#2)}
\newcommand{\bollobas}[0]{{Bollob\'as}}
\newcommand{\cerny}[0]{{{\v{C}}ern{\'y}}}
\newcommand{\erdos}[0]{{Erd\H{o}s}}
\newcommand{\lovasz}[0]{{Lov{\'a}sz}}
\newcommand{\komlos}[0]{{Koml\'os}}
\newcommand{\renyi}[0]{{R\'enyi}}
\newcommand{\szemeredi}[0]{{Szemer\'edi}}
\newcommand{\midd}[0]{\ \big\vert\ }
\newcommand{\odd}[1]{#1^{\mathrm{o}}}
\newcommand{\even}[1]{#1^{\mathrm{e}}}
\newcommand*{\E}{%
  \def\E@sub{}%
  \def\E@sup{}%
  \E@scripts
}
\newcommand*{\E@scripts}{%
  \@ifnextchar_\E@subscript{%
    \@ifnextchar^\E@supscript\E@finish
  }%
}
\def\E@subscript_#1{%
  \ifx\E@sub\@empty
    \def\E@sub{#1}%
  \else
    \errmessage{E already has a subscript}%
  \fi
  \E@scripts
}
\def\E@supscript^#1{%
  \ifx\E@sup\@empty
    \def\E@sup{#1}%
  \else
    \errmessage{E already has a superscript}%
  \fi
  \E@scripts
}
\newcommand*{\E@finish}[1]{%
  \mathbb{E}%
  \ifx\E@sub\@empty\else _{\E@sub}\fi
  \ifx\E@sup\@empty\else ^{\E@sup}\fi
  \left(#1\right)%
}
\newcommand*{\pr}{%
  \def\pr@sub{}%
  \def\pr@sup{}%
  \pr@scripts
}
\newcommand*{\pr@scripts}{%
  \@ifnextchar_\pr@subscript{%
    \@ifnextchar^\pr@supscript\pr@finish
  }%
}
\def\pr@subscript_#1{%
  \ifx\pr@sub\@empty
    \def\pr@sub{#1}%
  \else
    \errmessage{E already has a subscript}%
  \fi
  \pr@scripts
}
\def\pr@supscript^#1{%
  \ifx\pr@sup\@empty
    \def\pr@sup{#1}%
  \else
    \errmessage{E already has a superscript}%
  \fi
  \pr@scripts
}
\newcommand*{\pr@finish}[1]{%
  \mathbb{P}%
  \ifx\pr@sub\@empty\else _{\pr@sub}\fi
  \ifx\pr@sup\@empty\else ^{\pr@sup}\fi
  \left(#1\right)%
}
\def\namedlabel#1#2{\begingroup
    #2%
    \def\@currentlabel{#2}%
    \phantomsection\label{#1}\endgroup
}
\begin{document}
\maketitle

\begin{abstract}
  We study graph-theoretic properties of the trace of a random walk on a
  random graph. We show that for any $\varepsilon>0$ there exists $C>1$ such
  that the trace of the simple random walk of length $(1+\varepsilon)n\ln{n}$
  on the random graph $G\sim\gnp$ for $p>C\ln{n}/n$ is, with high
  probability, Hamiltonian and $\Theta(\ln{n})$-connected.  In the special
  case $p=1$ (i.e.\ when $G=K_n$), we show a hitting time result according to
  which, with high probability, exactly one step after the last vertex has
  been visited, the trace becomes Hamiltonian, and one step after the last
  vertex has been visited for the $k$'th time, the trace becomes
  $2k$-connected.
\end{abstract}

\section{Introduction}
Since the seminal study of \erdos{} and \renyi{}~\cite{ER_evo}, random graphs
have become an important branch of modern combinatorics.  It is an interesting
and natural concept to study for its own sake, but it has also proven to have
numerous applications both in combinatorics and in computer science.  Indeed,
random graphs have been a subject of intensive study during the last 50 years:
thousands of papers and at least three books~\cites{JLR,B01,FK} are devoted
to the subject.  The term \emph{random graph}  is used to refer to several
quite different models, each of which is essentially a distribution over
all graphs on $n$ labelled vertices.  Perhaps the two most famous models are
the classical models $\gnm$, obtained by choosing $m$ edges uniformly at
random among the $\binom{n}{2}$ possible edges, and $\gnp$, obtained by
selecting each edge independently with probability $p$.  Other models are
discussed in~\cite{FK}.

In this paper, we study a different model of random graphs -- the (random)
graph formed by the trace of a simple random walk on a finite graph.  Given a
base graph and a starting vertex, we select a vertex uniformly at random from
its neighbours and move to this neighbour, then independently select a vertex
uniformly at random from that vertex's neighbours and move to it, and so on.
The sequence of vertices this process yields is a \emph{simple random walk}
on that graph.  The set of vertices in this sequence is called the
\emph{range} of the walk, and the set of edges traversed by this walk is
called the \emph{trace} of the walk.  The literature on the topic of random
walks is vast; however, most effort was put into answering questions about
the range of the walk, or about the distribution of the position of the walk
at a fixed time.  Examples include estimating the \emph{cover time} (the time
it takes a walk to visit all vertices of the graph) and the \emph{mixing
time} of graphs (see \lovasz{}~\cite{LOV96} for a survey).  On the other
hand, to the best of our knowledge, there are almost no works addressing
explicitly questions about the trace of the walk.  One paper of this type is
that of Barber and Long~\cite{BL13}, discussing the quasirandomness of the
trace of long random walks on dense quasirandom graphs.  We mention here that
there are several papers studying the subgraph induced by vertices that are
\emph{not} visited by the random walk (see for example Cooper and
Frieze~\cite{CF13}, and \cerny{}, Teixeira and Windisch~\cite{CTW11}).  We also
mention that on infinite graphs, several properties of the trace have been
studied (for an example see~\cite{BGL07}).

Our study focuses on the case where the base graph $G$ is random and
distributed as $\gnp$.  We consider the graph $\Gamma$ on the same vertex set
($[n]=\left\{1,\ldots,n\right\}$), whose edges are the edges traversed by the
random walk on $G$.  A natural graph-theoretic question about $\Gamma$ is
whether it is connected.  A basic requirement for that to happen is that the
base graph is itself connected.  It is a well-known result (see~\cite{ER_rgI})
that in order to guarantee that $G$ is connected, we must take
$p>(\ln{n}+\omega(1))/n$.  Given that our base graph is indeed connected, for
the trace to be connected, the walk must visit all vertices.  An important
result by Feige~\cite{F95} states that for connected graphs on $n$ vertices,
this happens on average after at least $(1-o(1))n\ln{n}$ steps.  Cooper and
Frieze~\cite{CF07_srg} later gave a precise estimation for the average cover
time of (connected) random graphs, directly related to how large $p$ is, in
comparison to the connectivity threshold.  In fact, it can be derived from
their proof that if $p=\Theta(\ln{n}/n)$ and the length of the walk is at
most $n\ln{n}$, then the trace is typically not connected.

It is thus natural to execute a random walk of length
$(1+\varepsilon)n\ln{n}$ on a random graph which is above the connectivity
threshold by at least a large constant factor (which may depend on
$\varepsilon$), and to ask what other graph-theoretic properties the trace
has.  For example, is it highly connected?  Is it Hamiltonian?  The set of
visited vertices does not reveal much information about the global structure
of the graph, so the challenge here is to gain an understanding of that
structure by keeping track of the traversed edges.  What we essentially show
is that the trace is typically Hamiltonian and
$\Theta(\ln{n})$-vertex-connected.  Our method of proof will be to show that
the set of traversed edges typically forms an expander.

In the boundary case where $p=1$, i.e.\ when the base graph is $K_n$, we
prove a much more precise result.  As the trace becomes connected exactly
when the last vertex has been visited, and at least one more step is required
for that last visited vertex to have degree $2$ in the trace, one cannot hope
that the trace would contain a Hamilton cycle beforehand.  It is reasonable
to expect however that this degree requirement is in fact the bottleneck for
a typical trace to be Hamiltonian, as is the case in other random graph
models.  In this paper, we show a hitting time result according to which,
with high probability, one step after the walk connects the subgraph (that
is, one time step after the cover time), the subgraph becomes Hamiltonian.
This result implies that the bottleneck to Hamiltonicity of the trace lies
indeed in the minimum degree, and in that sense, the result is similar in
spirit to the results of \bollobas{}~\cite{Bol84}, and of Ajtai, \komlos{}
and \szemeredi{}~\cite{AKS85}.  We also extend this result for $k$-cover-time
vs.\ minimum degree $2k$ vs.\ $2k$-vertex-connectivity, obtaining a result
similar in spirit to the result of \bollobas{} and Thomason~\cite{BT85}.

Let us try to put our result in an appropriate context. The accumulated
research experience in several models of random graphs allows to predict
rather naturally that in our model (of the trace of a random walk on a random
graph, or on the complete graph) the essential bottleneck for the
Hamiltonicity would be existence of vertices of degree less than two. It is
not hard to prove that typically the next step after entering the last
unvisited vertex creates a graph of minimum degree two, and thus the
appearance of a Hamilton cycle should be tightly bundled with the cover time
of the random walk. Thus our results, confirming the above paradigm, are
perhaps expected and not entirely surprising. Recall however that it took a
fairly long and quite intense effort of the random graphs community, to go
from the very easy, in modern terms at least, threshold for minimum degree
two to the much more sophisticated result on Hamiltonicity, still considered
one of the pinnacles of research in the subject. Here too we face similar
difficulties; our proof, while having certain similarities to the prior
proofs of Hamiltonicity in random graphs, still has its twists and turns to
achieve the goal. In particular, for the case of a random walk on a random
graph, we cast our argument in a pseudo-random setting and invoke a
sufficient deterministic criterion for Hamiltonicity due to Hefetz,
Krivelevich and Szab\'o \cite{HKS09}. The hitting time result about a random
walk on the complete graph is still a delicate argument, as typical for
hitting time results, due to the necessity to pinpoint the exact point of the
appearance of a Hamilton cycle. There we look at the initial odd steps of the
walk, forming  a random multigraph with a given number of edges, and then add
just the right amount of some further odd and even steps to hit the target
exactly when needed.

\subsection{Notation and terminology}
Let $G$ be a (multi)graph on the vertex set $[n]$. For two vertex sets
$A,B\subseteq[n]$, we let $E_G(A,B)$ be the set of edges having one endpoint
in $A$ and the other in $B$. If $v\in[n]$ is a vertex, we may write
$E_G(v,B)$ when we mean $E_G(\left\{v\right\},B)$. We denote by $N_G(A)$ the
\emph{external} neighbourhood of $A$, i.e., the set of all vertices in
$[n]\smallsetminus A$ that have a neighbour in $A$. Again, we may write
$N_G(v)$ when we mean $N_G(\left\{v\right\})$. We also write
$N^+_G(A)=N_G(A)\cup A$. The degree of a vertex $v\in[n]$, denoted by
$d_G(v)$, is its number of incident edges, where a loop at $u$ contributes $2$
to its degree. The \emph{simplified graph} of $G$ is the simple graph $G'$
obtained by replacing each multiedge with a single edge having the same
endpoints, and removing all loops. The \emph{simple degree} of a vertex is
its degree in the simplified graph; it is denoted by $d'_G(v)=d_{G'}(v)$. We
let $\delta(G)$ and $\Delta(G)$ be the minimum and maximum \emph{simple}
degrees ($d'$) of $G$.  Let the \emph{edge boundary} of a vertex set $S$ be
the set of edges of $G$ with exactly one endpoint in $S$, and denote it by
$\partial_G{S}$ (thus $\partial_G(S)$ is the set of edge of the cut
$(S,S^c)$ in $G$).  If $v,u$ are distinct vertices of a graph $G$, the
\emph{distance from $v$ to $u$} is defined to be the minimum length (measured
in edges) of a path from $v$ to $u$ (or $\infty$ if there is no such path); it
is denoted by $d_G(v, u)$.  If $v$ is a vertex, the \emph{ball of radius $r$
around $v$} is the set of vertices of distance at most $r$ from $v$; it is
denoted by $B_G(v,r)$. In symbols:
\begin{equation*}
  B_G(v,r) = \left\{u\in [n]\mid d_G(v, u)\le r\right\}.
\end{equation*}
We also write $N_G(v,r)=B_G(v,r)\smallsetminus B_G(v,r-1)$ for the (inner)
\emph{vertex boundary} of that ball.  We will sometimes omit the subscript $G$
in the above notations if the graph $G$ is clear from the context.

A \emph{simple random walk} of length $t$ on a graph $G$, starting at a vertex
$v$, is denoted $(X_i^{v}(G))_{i=0}^t$. When the graph is clear from the
context, we may omit it and simply write $(X_i^{v})_{i=0}^t$.  When the
starting vertex is irrelevant, we may omit it as well, writing $(X_i)_{i=0}^t$.
In \cref{sec:preliminaries,sec:pseudo,sec:hittingtime} we shall define ``lazy''
versions of a simple random walk, for which we will use the same notation.  The
\emph{trace} of a simple random walk on a graph $G$ of length $t$, starting at
a vertex $v$, is the subgraph of $G$ having the same vertex set as $G$, whose
edges are all edges traversed by the walk (including loops), counted with
multiplicity (so it is in general a multigraph). We denote it by
$\Gamma_t^{v}(G)$, $\Gamma_t^{v}$ or $\Gamma_t$, depending on the context.

For a positive integer $n$ and a real $p\in[0,1]$, we denote by $\gnp$ the
probability space of all (simple) labelled graphs on the vertex set $[n]$
where the probability of each such a graph, $G=([n],E)$, to be chosen is
$p^{|E|}(1-p)^{\binom{n}{2}-|E|}$.
If $f,g$ are functions of $n$ we use the notation $f\sim g$ to denote
asymptotic equality.  That is, $f\sim g$ if and only if
$\lim_{n\to\infty} f/g=1$.

For the sake of simplicity and clarity of presentation, we do not make a
particular effort to optimize the constants obtained in our proofs.
We also omit floor and ceiling signs whenever these are not crucial. Most of
our results are asymptotic in nature and whenever necessary we assume that $n$
is sufficiently large.  We say that an event holds \emph{with high
probability} (\whp{}) if its probability tends to $1$ as $n$ tends to
infinity.

\subsection{Our results}
Our first theorem states that if $G\sim\gnp{}$ with $p$ above the
connectivity threshold by at least some large constant factor, and the walk
on $G$ is long enough to traverse the expected number of edges required to
make a random graph connected, then its trace is with high probability
Hamiltonian and highly connected.

\begin{ourthm}\label{thm:gnp}
  For every $\varepsilon>0$ there exist $C=C(\varepsilon)>0$ and
  $\beta=\beta(\varepsilon)>0$ such that for
  every edge probability $p=p(n)\ge C\cdot \frac{\ln{n}}{n}$ and for every
  $v\in[n]$, a random graph $G\sim\gnp$ is \whp{} such that for
  $L=(1+\varepsilon)n\ln{n}$, the trace $\Gamma^v_L(G)$ of a simple random
  walk of length $L$ on $G$, starting at $v$, is \whp{} Hamiltonian and
  $(\beta\ln{n})$-vertex-connected.
\end{ourthm}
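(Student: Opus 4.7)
The plan is to reduce to a deterministic problem: first condition on $G\sim\gnp$ having typical pseudo-random properties, and then show that the trace of any simple random walk on such a pseudo-random graph satisfies expansion conditions strong enough to invoke the deterministic Hamiltonicity criterion of Hefetz, Krivelevich and Szab\'o~\cite{HKS09}. For $p\ge C\ln{n}/n$ with $C=C(\varepsilon)$ sufficiently large, standard Chernoff and union-bound arguments give that \whp{} $G\sim\gnp$ satisfies: minimum and maximum degree $(1\pm o(1))np$; $|E_G(A,B)|=(1\pm o(1))p|A||B|$ for all disjoint $A,B\subseteq[n]$ of size at least $n/\ln{n}$; and $|N_G^+(S)|\ge c_1\min\{|S|np,n/2\}$ for every $S\subseteq[n]$ with $|S|\le n/\ln{n}$. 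I would condition on $G$ satisfying all of these.

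On such a host graph the spectral gap is $\Omega(1)$, hence the mixing time in total variation is $t_{\mathrm{mix}}=O(\ln{n})$ and the stationary distribution $\pi(v)=d_G(v)/2|E(G)|$ is uniform up to a $(1\pm o(1))$ factor. The core technical claim is that for every $S\subseteq[n]$ with $|S|\le n/2$, the walk traverses a constant fraction of the edges of $\partial_G S$, and in particular $|N_{\Gamma_L^v}(S)|\ge\min\{c_2|S|\ln{n}, n/4\}$. To prove this, I would split the length-$L$ walk into $\Theta(\ln{n})$ blocks, each consisting of a short mixing segment of length $t_{\mathrm{mix}}$ followed by a ``collection'' segment of length $\Theta(n)$. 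Within the collection segment the walk is essentially at stationarity, so the expected number of $\partial_G S$-crossings in a single block is $\Theta(n\cdot|\partial_G S|/|E(G)|)=\Omega(|S|)$ by pseudo-randomness of $G$. An Azuma-type martingale estimate yields concentration within a block, and conditioning on the block endpoints decouples different blocks enough to give an exponential tail that absorbs a union bound over $S$; a second-moment/coupon-collector step then upgrades the traversal count to a count of \emph{distinct} crossed edges, and hence to the stated lower bound on $|N_{\Gamma_L^v}(S)|$ via the pseudo-random structure of $G$.

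Once the expansion bound is established, both conclusions of the theorem follow quickly. The criterion of~\cite{HKS09} states that a graph in which every $S$ with $|S|\le n/4$ satisfies $|N(S)|\ge 2|S|+1$ and every pair of disjoint sets of size at least $n/4$ is joined by an edge, is Hamiltonian; the expansion bound of the previous paragraph delivers both hypotheses. For $(\beta\ln{n})$-vertex-connectivity I would fix a candidate separator $T$ of size $\beta\ln{n}$ and rerun the expansion argument for $\Gamma_L^v\setminus T$: since each vertex is incident to $\Omega(\ln{n})$ distinct trace-edges, deleting $\beta\ln{n}$ vertices with $\beta=\beta(\varepsilon)$ small erodes expansion by only a constant factor, so the residual graph remains connected. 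The union bound over $\binom{n}{\beta\ln{n}}$ separators and over all $S$ is comfortably absorbed by the $e^{-\Omega(|S|\ln{n})}$ tail obtained in the expansion step.

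The main obstacle is clearly the trace-expansion estimate: successive steps of a single random walk share a vertex and successive $\partial_G S$-crossings are correlated through the walk's current location, so no direct independence is available. The right choice of conditioning (on the block endpoints and mixing segments), the right quantity to count (raw traversals versus distinct edges versus vertex coverage in $N_G(S)$), and the concentration inequality used for each block must be set up carefully, and the resulting bound must remain quantitatively strong uniformly over all $S$ of size between $1$ and $n/2$. This is where I expect the bulk of the technical work to lie.
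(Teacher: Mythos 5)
Your high-level plan coincides with the paper's: condition on pseudo-random properties of $G\sim\gnp$, prove an expansion statement for the trace, and feed it to the Hefetz--Krivelevich--Szab\'o criterion. However, your core technical claim -- that for every $S$ with $|S|\le n/2$ the walk traverses a constant fraction of the edges of $\partial_G S$ -- is simply false outside the sparsest regime. The walk takes only $L=(1+\varepsilon)n\ln n$ steps, whereas $|\partial_G S|=\Theta(|S|(n-|S|)p)$, which, for $p\gg\ln n/n$, already exceeds $L$ for quite small $S$; even for $|S|=1$ the walk exits that vertex only $\Theta(\ln n)$ times but the vertex has $\Theta(np)\gg\ln n$ incident edges. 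The bound you actually need, $|N_{\Gamma_L}(S)|\gtrsim\min\{|S|\ln n,\,n/4\}$, cannot be derived from the "constant fraction of $\partial_G S$" claim, and passing from raw traversal counts to \emph{distinct} neighbours is exactly where the real work lies: a small set $A$ may have a handful of vertices in $N_G(A)$ attracting almost all crossings, and one must rule this out. The paper does so via a per-vertex statement -- every vertex is exited at least $\rho\ln n$ times (\cref{visits}) -- and then, given a candidate small target $B$, controls the probability that these $\rho\ln n$ independent-ish exits all fall in $A\cup B$, using the pseudo-random properties \ref{P:expansion} and \ref{P:bad} to exclude "heavy" vertices and "bad" subsets of $A$.

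Two further steps of your proposal are also unlikely to survive scrutiny as stated. First, Azuma over a block of length $\Theta(n)$ with bounded differences gives deviations of order $\sqrt{n}$, which drowns the signal of $\Theta(|S|)$ expected crossings whenever $|S|\ll\sqrt{n}$; a variance-sensitive inequality and careful handling of Markovian dependence would be needed, and you do not set this up. The paper sidesteps the issue by using segments of length $\ln^2 n$ separated by mixing buffers of length $O(\ln n)$, bounding within each segment the probability that a \emph{fixed} vertex is exited via a supermartingale estimate (\cref{superm}), and summing over $\Theta(n)$ segments -- this is a per-vertex Bernoulli structure, not a per-cut count. Second, your connectivity step unions over $\binom{n}{\beta\ln n}=e^{\Theta(\ln^2 n)}$ separators $T$, but the expansion-step tail for $|S|=\Theta(1)$ is only $n^{-\Theta(1)}$ and cannot absorb that factor. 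The paper instead deduces $(\beta\ln n)$-connectivity deterministically from the two expansion properties \ref{E:small} and \ref{E:large} (via an $(R,c)$-expander argument, \cref{rc_lemma}), with no additional union bound.
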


Our proof strategy will be as follows.  First we prove that \whp{}
$G\sim\gnp$ satisfies some pseudo-random properties.  Then we show that
\whp{} the trace of a simple random walk on \emph{any} given graph, which
satisfies these pseudo-random properties, has good expansion properties.
Namely, it has two properties, one ensures expansion of small sets, the
other guarantees the existence of an edge between any two disjoint large
sets.

In the next two theorems we address the case of a random walk $X$ executed on
the complete graph $K_n$, and we assume that the walk starts at a uniformly
chosen vertex.  Denote the number of visits of the random walk $X$ to a
vertex $v$ by time $t$ (including the starting vertex) by $\mu_{t}(v)$.  For
a natural number $k$, denote by $\tau_C^k$ the \emph{$k$-cover time} of the
graph by
the random walk; that is, the first time $t$ for which each vertex in $G$ has
been visited at least $k$ times. In symbols,
\begin{equation}\label{tauCk}
  \tau_C^k = \min\left\{t\mid \forall v\in [n],\ \mu_t(v)\ge k\right\}.
\end{equation}
When $k=1$ we simply write $\tau_C$ and call it the \emph{cover time} of
the graph. The objective of the following theorems is to prove that the
minimal requirements for Hamiltonicity and $k$-vertex-connectivity
are in fact the bottleneck for a typical trace to have these properties.

\begin{ourthm}\label{kn_ham}
  For a simple random walk on $K_n$, denote by $\tau_\mathcal{H}$ the hitting
  time of the property of being Hamiltonian. Then \whp{}
  $\tau_\mathcal{H}=\tau_C+1$.
\end{ourthm}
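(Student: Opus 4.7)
The lower bound $\tau_\mathcal{H} \ge \tau_C + 1$ is immediate: writing $v^* := X_{\tau_C}$ and $u := X_{\tau_C - 1}$, at time $\tau_C$ the vertex $v^*$ is visited for the first time, so its unique incident edge in $\Gamma_{\tau_C}$ is $u v^*$ and it has simple degree $1$ there, precluding Hamiltonicity. For the matching upper bound, set $w := X_{\tau_C + 1}$. Since $v^*$ is absent from $\Gamma_{\tau_C - 1}$, the trace $\Gamma_{\tau_C + 1}$ is obtained from $\Gamma_{\tau_C - 1}$ by attaching $v^*$ via the two edges $u v^*$ and $v^* w$; any Hamilton cycle in $\Gamma_{\tau_C + 1}$ must use both of them, so Hamiltonicity of $\Gamma_{\tau_C + 1}$ is equivalent to the conjunction of (A) $w \ne u$ \emph{and} (B) $\Gamma_{\tau_C - 1}$ (which spans $[n] \setminus \{v^*\}$) containing a Hamilton path from $u$ to $w$. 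Claim (A) follows immediately from the Markov property: given $X_{\tau_C} = v^*$, the step $X_{\tau_C + 1}$ is uniform on $[n] \setminus \{v^*\}$, so $\pr{w = u} = 1/(n-1) = o(1)$.

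The substance is (B). Conditional on $\tau_C = t$ and $X_t = v$ for fixed $t$ in a concentration window for the cover time on $K_n$ and fixed $v \in [n]$, the segment $(X_0, \ldots, X_{t-1})$ is distributed as a simple random walk on $K_{[n] \setminus \{v\}}$ conditioned on its own cover time being at most $t-1$, terminating at the vertex $u = X_{t-1}$. Following the sketch in the introduction, we split this walk into its odd- and even-indexed edges and focus on the odd submultigraph. Using the rapid mixing of the simple random walk on $K_{n-1}$, this odd submultigraph is, up to $o(1)$ total variation, distributed as a uniform random multigraph on $[n] \setminus \{v\}$ with the same number of edges. Since $\tau_C \sim n \ln n$, this amounts to $\sim n \ln n / 2$ edges --- well above the threshold at which a $\gnm$-type random multigraph is \whp{} \emph{Hamiltonian-connected} (contains a Hamilton path between every prescribed ordered pair of distinct vertices), by the classical hitting-time techniques in the spirit of \bollobas{} and of \komlos{}-\szemeredi{}. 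In particular a Hamilton path between the specific endpoints $u$ and $w$ exists \whp, establishing (B). A union bound over $v \in [n]$ and $t$ in the concentration window completes the proof.

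The main obstacle is the control in the previous paragraph of the conditional law of the odd-edge submultigraph under the complex stopping-time event $\{\tau_C = t, X_t = v\}$, finely enough both to preserve the uniform-multigraph coupling after the $O(n \cdot n \ln n)$-term union bound and to ensure that Hamiltonian-connectedness --- a genuine strengthening of Hamiltonicity --- holds between the \emph{specific} random endpoints $u, w$ rather than just some pair. As the authors indicate, this is handled by splitting the walk into a ``bulk'' of odd steps whose joint distribution is uniformly close to i.i.d.\ random edges, and a short tail of additional odd and even steps used to land exactly at time $\tau_C + 1$ so that the endpoint structure matches (A).
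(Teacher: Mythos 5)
Your reduction of the problem to ``$w\ne u$ and $\Gamma_{\tau_C-1}$ has a Hamilton path from $u$ to $w$'' is correct and clean, and your observation that, conditioned on avoiding $v$, the prefix is an honest SRW on $K_{n-1}$ is also right. However, there is a quantitative error at the heart of the argument that is not merely a technical obstacle to be swept under ``splitting the walk into a bulk and a short tail.'' You assert that $\sim n\ln n/2$ edges is ``well above the threshold'' at which a $G(n,m)$-type random multigraph is \whp{} Hamiltonian-connected. It is not. Since $\tau_C = n\ln n + O_P(n)$ on $K_n$, the odd submultigraph has $m = \tfrac{n}{2}\ln n + O_P(n)$ edges, and a vertex of a random multigraph with this many edges has degree roughly Poisson with mean $\ln n$; the expected number of vertices of simple degree at most $2$ is then of order $\ln^{2}n \to\infty$, and there is even a non-vanishing probability of having isolated vertices. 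So the odd submultigraph is \whp{} \emph{not} Hamiltonian-connected --- indeed \whp{} not Hamiltonian and with positive probability not even connected. The entire point of the hitting-time difficulty, which your proposal does not surmount, is precisely that the odd half of the trace alone is \emph{below} every relevant degree threshold, and the deficiency must be repaired using edges from the even steps.

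The paper addresses this head-on. It introduces the set $\Small$ of low-degree vertices of the odd trace $\odd{\Gamma_-}$ and extends to the graph $\Gamma_* = \odd{\Gamma_-} + \{\,e_i : i\le\tau_C+1,\ e_i\cap\Small\ne\varnothing\,\}$, which deliberately borrows \emph{both} odd and even edges incident to the problematic vertices, up to exactly the hitting time $\tau_C+1$. It then shows $\Gamma_*$ contains a sparse $\bigl(\tfrac{n}{4},2\bigr)$-expander $\Gamma_0$ (\cref{gamma0}) and runs a P\'osa rotation--extension / booster argument (\cref{posa,final}): each non-Hamiltonian expander has $\Omega(n^2)$ boosters, and the unused portion of $\odd{\Gamma_-}$ behaves sufficiently like i.i.d.\ uniform edges to hit one of them, so one can iterate to Hamiltonicity. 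This is a constructive route that never needs Hamiltonian-connectedness or control over the specific random endpoints $u,w$; it sidesteps your conditioning difficulties entirely. Your last paragraph candidly flags the gap --- but the fix is not a finer mixing-time estimate on the odd submultigraph alone; it is structurally the need for even-step edges at small-degree vertices, followed by a rotation--extension argument rather than a black-box threshold theorem.
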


\begin{ourcor}\label{kn_pm}
  Assume $n$ is even.
  For a simple random walk on $K_n$, denote by $\tau_\mathcal{PM}$ the hitting
  time of the property of admitting a perfect matching.  Then \whp{}
  $\tau_\mathcal{PM} = \tau_C$.
\end{ourcor}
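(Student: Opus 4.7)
The plan is to deduce \cref{kn_pm} directly from \cref{kn_ham}. First I would note the deterministic lower bound $\tau_\mathcal{PM}\ge\tau_C$: for every $t<\tau_C$ there is a vertex $v$ that the walk has not yet visited, so $v$ is isolated in $\Gamma_t$ and $\Gamma_t$ cannot admit a perfect matching. It therefore suffices to show that whp $\tau_\mathcal{PM}\le\tau_C$, i.e.\ that whp $\Gamma_{\tau_C}$ already contains a perfect matching.

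To obtain this, I would invoke \cref{kn_ham}, which gives that whp $\Gamma_{\tau_C+1}$ contains a Hamilton cycle $H$. Since $n$ is even, $H$ has even length, and its edge set decomposes as $E(H)=M_1\sqcup M_2$ into the two alternating perfect matchings of the cycle. Now the underlying simple graph of $\Gamma_{\tau_C+1}$ differs from that of $\Gamma_{\tau_C}$ by at most one edge, namely $e=\{X_{\tau_C},X_{\tau_C+1}\}$, and only if $e$ had not been traversed in the first $\tau_C$ steps. Since $e$ lies in at most one of the two matchings $M_1,M_2$, the other is a perfect matching contained entirely in $\Gamma_{\tau_C}$, yielding the desired bound.

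The only conceptual ingredient beyond quoting \cref{kn_ham} is the standard observation that an even Hamilton cycle splits into two edge-disjoint perfect matchings, which allows us to discard the single (possibly) new edge at step $\tau_C+1$ without losing the matching property. I do not anticipate any real obstacle: once \cref{kn_ham} is available, the corollary should follow in a few lines along exactly these lines.
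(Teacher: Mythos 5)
Your proposal is correct and is essentially the same argument as the paper's. The paper deduces from its Hamiltonicity proof that $\Gamma_{\tau_C}$ contains a Hamilton path and takes every second edge of that path; you instead take the Hamilton cycle in $\Gamma_{\tau_C+1}$ guaranteed by Theorem~2 as a black box, split it into its two alternating perfect matchings, and note that one of them avoids the at-most-one edge $e_{\tau_C+1}$ added at the last step — these are the same observation presented from slightly different angles, and yours has the mild advantage of not needing to peek inside the Hamiltonicity proof to extract the Hamilton-path statement.
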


\begin{samepage}
\begin{ourthm}\label{kn_conn}
  For every $k\ge 1$, for a simple random walk on $K_n$, denote by
  $\tau_\delta^k$ the hitting time of the property of being spanning with
  minimum simple degree $k$, and denote by $\tau_\kappa^k$ the hitting time of
  the property of being spanning $k$-vertex-connected. Then \whp{}
  \begin{equation*}
    \begin{array}{rcccl}
      \tau_C^{k}    &=& \tau_\delta^{2k-1} &=& \tau_\kappa^{2k-1},\\
      \tau_C^{k} +1 &=& \tau_\delta^{2k} &=& \tau_\kappa^{2k}.
    \end{array}
  \end{equation*}
\end{ourthm}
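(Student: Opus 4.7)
The plan is to prove the two chains of equalities by establishing easy lower bounds and harder upper bounds on each quantity. For the lower bounds, $\tau_\kappa^j \ge \tau_\delta^j$ is immediate from the definition of $j$-vertex-connectivity. The bound $\tau_\delta^{2k-1} \ge \tau_C^k$ follows from a simple edge-counting argument: each visit of the walk to a vertex $u$ contributes at most two distinct traversed edges incident to $u$ (one entry-edge and one exit-edge), so a simple degree of $2k-1$ at $u$ forces at least $k$ visits. The bound $\tau_\delta^{2k} \ge \tau_C^k + 1$ is similarly direct: at time $\tau_C^k$ the current vertex $v = X_{\tau_C^k}$ has just completed its $k$-th visit, so it has made only $k-1$ exits together with at most $k$ entries, giving at most $2k-1$ distinct incident traversed edges and hence simple degree strictly less than $2k$.

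For the upper bounds I would proceed in two stages. First, a ``no-coincidence'' argument shows that whp the minimum simple degree in the trace at time $\tau_C^k$ is at least $2k-1$, and at time $\tau_C^k+1$ at least $2k$. On $K_n$, each step of the walk is uniform on the $n-1$ non-current vertices, so conditional on the visit times of a fixed vertex $u$, the neighbours involved in its entries and exits are nearly i.i.d.\ uniform; hence with high probability they are all distinct, so no edge-coincidence reduces the simple degree below the visit-count-based lower bound. Since whp $\tau_C^k = O(n \ln n)$ and every vertex is visited $O(\ln^2 n)$ times, a birthday-type estimate combined with a union bound over the $n$ vertices suffices.

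The main obstacle is establishing the upper bounds on vertex-connectivity, namely showing that whp the trace at time $\tau_C^k$ is $(2k-1)$-vertex-connected and at time $\tau_C^k+1$ is $2k$-vertex-connected. I would rule out, for every separating set $S$ with $|S| \le 2k-2$ (respectively $|S| \le 2k-1$) and every partition of $[n]\setminus S$ into two nonempty parts $A, B$, the existence of no trace edge between $A$ and $B$. The strategy is to adapt the classical Bollob\'as--Thomason hitting-time argument for $G(n,m)$ to the trace model: first establish pseudo-random expansion properties of the trace up to time $\tau_C^k$ (in the spirit of the expansion arguments used in the proof of Theorem~\ref{thm:gnp}), then deduce $(2k-1)$-vertex-connectivity from expansion together with the matching minimum-degree condition, paying particular attention to the tight cuts ``around'' the critical vertex whose simple degree is the minimal $2k-1$. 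A further technical subtlety is that $\tau_C^k$ is itself a stopping time depending on the walk, so I would condition on $\tau_C^k$ being close to its typical value (using concentration estimates) and then follow the style of the proof of Theorem~\ref{kn_ham}, decomposing the walk into its odd and even steps to decouple short-range dependence from the long-range near-uniformity of the walk on $K_n$.
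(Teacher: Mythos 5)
Your easy-direction bounds are all correct and match the paper, but the harder direction has a genuine gap. The proposed ``no-coincidence'' argument — that whp the entry/exit neighbours at every vertex are all distinct, so the simple degree equals the visit count — is false as a uniform statement over all vertices. On $K_n$ the lazy walk makes roughly $n\ln n$ steps, of which about $\ln n$ are immediate backtracks $u\to v\to u$, each of which produces a double edge; and a birthday-type calculation at a fixed vertex visited $\Theta(\ln n)$ times gives collision probability $\Theta(\ln^2 n/n)$, so the union bound over $n$ vertices yields $\Theta(\ln^2 n)$, not $o(1)$. Hence whp the trace \emph{does} contain loops and multi-edges; your estimate cannot be pushed through. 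The crucial observation you are missing (and which the paper makes central) is that coincidences only matter at vertices whose degree is close to $2k$ anyway: the paper defines the set $\Small$ of vertices with small degree in the odd half of the trace, shows $|\Small|\le n^{0.2}$ whp, and then proves that \emph{no vertex of $\Small$} is incident to a loop or multiple edge. That restricted union bound (over $|\Small|\cdot O(n\ln n)$ events each of probability $O(n^{-2.9})$) works; your unrestricted one does not.

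For the connectivity upper bound your sketch correctly names the difficulty but is too vague to carry it out. The paper's route is not a generic Bollob\'as--Thomason cut analysis applied to the trace: it builds a concrete auxiliary graph $\Gamma_* = \odd{\Gamma_-} + \{e_i \mid i\le \tau_C^k+1,\ e_i\cap\Small\ne\varnothing\}$, establishes $\delta(\Gamma_*)\ge 2k$, proves structural facts about $\Small$ (no two $\Small$ vertices joined by a short path, bounded edge counts in small sets), sparsifies $\Gamma_*$ to a subgraph $\Gamma_0$ with $O(n\ln n)$ edges that is an $(n/(2k+2),2k)$-expander, and then invokes the deterministic $(R,c)$-expander-implies-$k$-connectivity criterion (\cref{rc_lemma}). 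None of this is recoverable from the words ``pay attention to tight cuts around the critical vertex''; you would need to produce something like the $\Small$/$\Gamma_*$/$\Gamma_0$ construction, or an explicit alternative of comparable precision, before the argument is a proof.
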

\end{samepage}

\subsection{Organization}
The organization of the paper is as follows. In the next section we present
some auxiliary results, definitions and technical preliminaries.  In
\cref{sec:pseudo} we explore important properties of the random walk on
a pseudo-random graph.  In \cref{sec:hamcon} we prove the Hamiltonicity and
vertex-connectivity results for the trace of the walk on $\gnp$.  In
\cref{sec:hittingtime} we prove the hitting time results of the walk on $K_n$.
We end by concluding remarks and proposals for future work in
\cref{sec:concluding}.

\section{Preliminaries}\label{sec:preliminaries}
In this section we provide tools to be used by us in the succeeding sections.
We start by stating two versions of known bounds on large deviations of
random variables, due to Chernoff~\cite{Chernoff} and
Hoeffding~\cite{Hoeffding}, whose proofs can be found, e.g., in Chapter 2
of~\cite{JLR}.  Define
\begin{equation*}
\varphi(x)=\begin{cases}
  (1+x)\ln(1+x)-x & x \ge -1\\
  \infty & \text{otherwise}.
  \end{cases}
\end{equation*}

\begin{theorem}[\cite{JLR}*{Theorem 2.1}]\label{chernoff}
  Let $X\sim\bin(n,p)$, $\mu=np$, $a\ge 0$. Then the following inequalities
  hold:
  \begin{align}
    \label{chernoff_ineq_low}
    \pr{X\le \mu - a}
    &\le \exp\left(-\mu\varphi\left(\frac{-a}{\mu}\right)\right)
    \le \exp\left(-\frac{a^2}{2\mu}\right),\\
    \label{chernoff_ineq_high}
    \pr{X\ge \mu + a}
    &\le \exp\left(-\mu\varphi\left(\frac{a}{\mu}\right)\right)
    \le \exp\left(-\frac{a^2}{2(\mu+a/3)}\right).
  \end{align}
\end{theorem}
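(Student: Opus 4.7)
The plan is to use the classical exponential-moment (Chernoff) method, which yields the sharp bounds involving $\varphi$, and then to descend to the Gaussian/Bernstein-style right-hand sides via two elementary real-variable inequalities for $\varphi$.

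For the upper tail, I would fix $t>0$ and apply Markov's inequality to $e^{tX}$, giving $\pr{X\ge \mu+a}\le e^{-t(\mu+a)}\,\E{e^{tX}}$. Since $X$ is a sum of $n$ independent $\mathrm{Bernoulli}(p)$ variables, the moment generating function factorises as $\E{e^{tX}}=(1-p+pe^t)^n$, and the elementary bound $1+y\le e^y$ upgrades this to $\E{e^{tX}}\le \exp(\mu(e^t-1))$. Optimising $\exp(\mu(e^t-1)-t(\mu+a))$ in $t$ --- the minimiser is $t^{\ast}=\ln(1+a/\mu)$ --- the exponent collapses to
\begin{equation*}
  a-(\mu+a)\ln(1+a/\mu)=-\mu\varphi(a/\mu),
\end{equation*}
which is the first inequality of \eqref{chernoff_ineq_high}. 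The lower tail \eqref{chernoff_ineq_low} is completely symmetric: apply Markov to $e^{-sX}$ for $s>0$, repeat the MGF computation with $e^{-s}$ in place of $e^{t}$, and optimise at $s^{\ast}=-\ln(1-a/\mu)$ (meaningful for $a<\mu$; otherwise the inequality is trivial since $X\ge 0$), yielding $\pr{X\le \mu-a}\le \exp(-\mu\varphi(-a/\mu))$.

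It remains to deduce the second inequality in each of \eqref{chernoff_ineq_low} and \eqref{chernoff_ineq_high} from the $\varphi$-form. These reduce to the two real-variable estimates $\varphi(-x)\ge x^{2}/2$ for $x\in[0,1]$ and $\varphi(x)\ge x^{2}/\bigl(2(1+x/3)\bigr)$ for $x\ge 0$. The former is immediate from the nonnegative-coefficient Taylor expansion $\varphi(-x)=\sum_{k\ge 2}x^{k}/(k(k-1))$. The latter is the only genuinely analytic step --- and the one place I would expect to spend any care at all --- and it can be settled by setting $h(x):=(1+x/3)\varphi(x)-x^{2}/2$ and verifying that $h(0)=h'(0)=0$ and $h''(x)\ge 0$ on $[0,\infty)$, using $\varphi'(x)=\ln(1+x)$ and $\varphi''(x)=1/(1+x)$; the $h''\ge 0$ inequality in turn reduces to $\ln(1+x)\ge x/(1+x)$, which is standard (differentiate once more to see the difference is nondecreasing from value $0$ at the origin).
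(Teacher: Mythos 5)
Your proof is correct and is exactly the standard exponential-moment argument used in the cited reference [JLR, Theorem~2.1] (which the paper quotes without proof): Markov on $e^{\pm tX}$, the bound $1+y\le e^y$ on the factored MGF, optimisation at $t^\ast=\ln(1+a/\mu)$, and then the two elementary estimates $\varphi(-x)\ge x^2/2$ and $\varphi(x)\ge x^2/(2(1+x/3))$. The analytic step you flag does go through, since $h''(x)=\tfrac{2}{3}\bigl(\ln(1+x)-\tfrac{x}{1+x}\bigr)\ge 0$.
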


\begin{theorem}[\cite{JLR}*{Theorem 2.10}]\label{chernoff:hypg}
  Let $N\ge 0$, and let $0\le K,n\le N$ be natural numbers. Let
  $X\sim\hypg(N,K,n)$, $\mu=\E{X}=nKN^{-1}$. Then inequalities
  \eqref{chernoff_ineq_low} and \eqref{chernoff_ineq_high} hold.
\end{theorem}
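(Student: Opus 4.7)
The plan is to reduce the hypergeometric tail bound to the binomial one (Theorem \ref{chernoff}) by showing that sampling without replacement is at least as concentrated as sampling with replacement. Write $X=\sum_{i=1}^{n} Y_i$, where $Y_i$ is the indicator that the $i$th item drawn (without replacement) from an urn of $N$ items with $K$ marked is marked, and let $X'=\sum_{i=1}^n Y_i'$ with $Y_1',\ldots,Y_n'$ i.i.d.\ Bernoulli$(K/N)$, so that $X'\sim\bin(n,K/N)$ and $\E(X')=\E(X)=\mu$. The core claim I would establish is the Hoeffding MGF comparison: for every $t\in\mathbb{R}$,
\begin{equation*}
  \E\left(e^{tX}\right)\le \E\left(e^{tX'}\right).
\end{equation*}
Given this, the usual Chernoff derivation, which takes the binomial bounds in \eqref{chernoff_ineq_low} and \eqref{chernoff_ineq_high} purely from bounds on $\E(e^{tX'})$ via Markov's inequality applied to $e^{tX}$, carries over verbatim, yielding the same two inequalities for $X$.

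To prove the MGF comparison, I would note that the function $\phi(x)=e^{tx}$ is convex, and establish the more general statement that $\E\phi(X)\le \E\phi(X')$ for every convex $\phi\colon\mathbb{R}\to\mathbb{R}$. The standard route is the elegant averaging argument due to Hoeffding: the distribution of $X$ equals the distribution of a random convex combination of i.i.d.\ Bernoulli draws, in the following precise sense. Conditionally on the multiset of ``success positions'' one samples an $n$-subset of $[N]$ uniformly, and $X$ has the same distribution as the average over all orderings; comparing to $X'$, one writes
\begin{equation*}
  X \stackrel{d}{=} \E\left(X'\,\bigm|\,\mathcal{F}\right)
\end{equation*}
for an appropriate sub-$\sigma$-algebra $\mathcal{F}$, or equivalently exhibits a martingale coupling between the two. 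Jensen's inequality applied inside the conditional expectation then yields $\E\phi(X)\le\E\phi(X')$ for every convex $\phi$.

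An equivalent route, which I find cleaner to execute, is to invoke negative association of the indicators $Y_1,\ldots,Y_n$: for sampling without replacement these indicators are well known to be negatively associated, and for negatively associated $\{0,1\}$-valued summands the MGF satisfies
\begin{equation*}
  \E\left(e^{t\sum_i Y_i}\right)\le \prod_{i=1}^n \E\left(e^{tY_i}\right) = \E\left(e^{tX'}\right),
\end{equation*}
since each $Y_i$ has the same marginal Bernoulli$(K/N)$ distribution. Either way, once the MGF bound is secured, the proof is finished by repeating the optimization over $t>0$ (for the upper tail) and $t<0$ (for the lower tail) that produces the exponent $\mu\varphi(\pm a/\mu)$, together with the elementary inequality $\varphi(x)\ge x^2/(2+2x/3)$ used to pass to the simpler quadratic forms in \eqref{chernoff_ineq_low} and \eqref{chernoff_ineq_high}. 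The main technical obstacle is exactly the MGF comparison; once it is in place, nothing about the binomial proof needs to change.
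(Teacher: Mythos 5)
The paper does not prove \cref{chernoff:hypg}; it is quoted verbatim from \cite{JLR}*{Theorem 2.10}, whose proof in turn rests on Hoeffding's 1963 observation that a without-replacement sum is dominated in the convex order by the corresponding with-replacement (binomial) sum, so that the moment-generating-function bound and hence the entire Chernoff optimization carry over unchanged. Your proposal reproduces exactly this argument, and both of the routes you sketch for the MGF comparison---the convex-ordering/martingale coupling and the negative-association bound $\E(e^{t\sum_i Y_i})\le\prod_i\E(e^{tY_i})$---are standard and correct, so there is nothing to add.
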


Observe that for every $c>0$, letting $\ell_c(x) = -cx+1-c-e^{-c}$ we have that
$\varphi(x)\ge \ell_c(x)$ for every $x$, as $\ell_c(x)$ is the tangent line to
$\varphi(x)$ at $x=e^{-c}-1$, and $\varphi(x)$ is convex.
We thus obtain the following bound:
\begin{corollary}\label{chernoff:cor}
  Let $X\sim\bin(n,p)$, $\mu=np$, $0<\alpha<1<\beta$. Then the following
  inequalities hold for every $c>0$:
  \begin{align*}
    \pr{X\le \alpha\mu} &\le \exp\left(-\mu(1-e^{-c}-\alpha c)\right),\\
    \pr{X\ge \beta\mu} &\le \exp\left(-\mu(1-e^{-c}-\beta c)\right).
  \end{align*}
\end{corollary}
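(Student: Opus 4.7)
The plan is entirely straightforward: this is a direct corollary of \cref{chernoff} combined with the tangent-line bound $\varphi(x)\ge \ell_c(x)=-cx+1-c-e^{-c}$ that was already recorded just before the corollary statement. Since $\varphi$ is convex on $[-1,\infty)$ (one-line check: $\varphi''(x)=1/(1+x)>0$) and $\ell_c$ is the tangent at $x_0=e^{-c}-1$, the inequality holds globally. So once I have this, the rest is arithmetic.

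For the lower tail, I set $a=(1-\alpha)\mu$, which is nonnegative since $\alpha<1$, so that $\mu-a=\alpha\mu$. Applying the first inequality in \cref{chernoff} gives $\pr{X\le\alpha\mu}\le \exp(-\mu\varphi(\alpha-1))$, and then replacing $\varphi(\alpha-1)$ by the lower bound $\ell_c(\alpha-1)=-c(\alpha-1)+1-c-e^{-c}=1-e^{-c}-c\alpha$ yields exactly the claimed bound.

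For the upper tail, symmetrically, I take $a=(\beta-1)\mu\ge 0$ since $\beta>1$, so that $\mu+a=\beta\mu$. Applying the second inequality of \cref{chernoff} gives $\pr{X\ge\beta\mu}\le\exp(-\mu\varphi(\beta-1))$, and the same substitution $\varphi(\beta-1)\ge\ell_c(\beta-1)=1-e^{-c}-c\beta$ produces the second inequality of the corollary.

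There is no genuine obstacle here; the only thing to remark on is the role of the parameter $c$. The point of the corollary is precisely that, by freely choosing $c>0$, the user can linearize the Chernoff bound in a form that makes it easy to combine with union bounds over sets of size $\binom{n}{k}$ and the like (as will be used in the sequel, where one typically chooses $c$ depending on the quantitative regime of $\alpha$ or $\beta$). So the whole contribution of the corollary is really to package the tangent-line trick in a reusable form, and the proof is just the two substitutions above.
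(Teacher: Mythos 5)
Your proof is correct and follows exactly the route the paper intends: the corollary is stated immediately after the observation that $\varphi(x)\ge\ell_c(x)$ via the tangent at $x_0=e^{-c}-1$, and the paper leaves the two substitutions ($a=(1-\alpha)\mu$ and $a=(\beta-1)\mu$ into \cref{chernoff}) implicit, which is precisely what you spell out. Nothing is missing or different in approach.
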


\begin{samepage}
The following is a trivial yet useful bound:
\begin{claim}\label{chernoff:trivial}
  Suppose $X\sim\bin(n,p)$. The following bound holds:
  \begin{equation*}
    \pr{X\ge k} \le \binom{n}{k}p^k \le \left(\frac{enp}{k}\right)^k.
  \end{equation*}
\end{claim}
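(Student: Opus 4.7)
The plan is to prove the two inequalities separately, both being classical and short. For the first inequality $\pr{X\ge k}\le \binom{n}{k}p^k$, I would use a straightforward union bound. Writing $X = \sum_{i=1}^n Y_i$ where the $Y_i$ are i.i.d.\ $\bin(1,p)$, the event $\{X\ge k\}$ is contained in the union, over all $k$-subsets $S\subseteq[n]$, of the events $\{Y_i=1\text{ for all }i\in S\}$. Each such event has probability exactly $p^k$ by independence, and there are $\binom{n}{k}$ of them, so the union bound yields the desired inequality.

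For the second inequality $\binom{n}{k}p^k \le \left(\frac{enp}{k}\right)^k$, it suffices to show the standard combinatorial bound $\binom{n}{k}\le \left(\frac{en}{k}\right)^k$. I would argue in two steps: first, $\binom{n}{k}\le \frac{n^k}{k!}$ since $\binom{n}{k} = \frac{n(n-1)\cdots(n-k+1)}{k!}\le \frac{n^k}{k!}$; second, $k!\ge \left(\frac{k}{e}\right)^k$, which follows from the Taylor expansion $e^k = \sum_{j\ge 0}\frac{k^j}{j!}\ge \frac{k^k}{k!}$. Multiplying both estimates gives $\binom{n}{k}p^k \le \frac{(np)^k}{k!}\le \left(\frac{enp}{k}\right)^k$.

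There is no real obstacle here; both steps are textbook and the overall argument is a few lines. The only mild subtlety is the implicit assumption $k\le n$ (otherwise $\binom{n}{k}=0$ and the inequality is trivial), and the convention that the bound is vacuous or trivially true at the boundary $k=0$. I would simply remark on this at the start and then present the two displays above.
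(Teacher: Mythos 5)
Your proof is correct and takes essentially the same approach as the paper: both deduce $\pr{X\ge k}\le\binom{n}{k}p^k$ by writing $X$ as a sum of indicators and applying a union bound over $k$-subsets. The only difference is that you additionally spell out the standard estimate $\binom{n}{k}\le (en/k)^k$ via $k!\ge(k/e)^k$, which the paper leaves implicit.
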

\end{samepage}

\begin{proof}
  Think of $X$ as $X=\sum_{i=1}^n X_i$, where $X_i$ are i.i.d.\ Bernoulli
  tests with probability $p$. For any set $A\subseteq[n]$ with $|A|=k$, let
  $E_A$ be the event ``$X_i$ have succeeded for all $i\in A$''. Clearly,
  $\pr{E_A}=p^k$. If $X\ge k$, there exists $A\subseteq[n]$ for which $E_A$.
  Thus, the union bound gives
  \begin{equation*}
    \pr{X\ge k} \le\binom{n}{k}\pr{E_A} = \binom{n}{k}p^k
    \le \left(\frac{enp}{k}\right)^k.\qedhere
  \end{equation*}
\end{proof}

\subsection{\texorpdfstring{$(R,c)$}{(R,c)}-expanders}\label{sec:RCexpanders}
Let us first define the type of expanders we intend to use.
\begin{definition}
  For every $c>0$ and every positive integer $R$ we say that a graph
  $G=(V,E)$ is an \emph{$(R,c)$-expander} if every subset of vertices
  $U\subseteq V$ of cardinality $|U|\le R$ satisfies $|N_G(U)|\ge c|U|$.
\end{definition}

Next, we state some properties of $(R,c)$-expanders.

\begin{claim}\label{rc1}
  Let $G=(V,E)$ be an $(R,c)$-expander, and let $S\subseteq V$ of cardinality
  $k< c$. Denote the connected components of $G\smallsetminus S$ by
  $S_1,\ldots,S_t$, so that $1\le|S_1|\le\ldots\le|S_t|$. It follows that
  $|S_1|>R$.
\end{claim}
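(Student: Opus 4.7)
The plan is to argue by contradiction: assume the smallest component $S_1$ has $|S_1|\le R$ and derive that its external neighborhood is too large to fit inside $S$.

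First I would observe the crucial structural fact that, because $S_1$ is a connected component of $G\smallsetminus S$, every neighbour in $G$ of a vertex of $S_1$ that lies outside $S_1$ must lie in $S$. In other words, $N_G(S_1)\subseteq S$, so $|N_G(S_1)|\le |S|=k$.

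Next, I would invoke the $(R,c)$-expander hypothesis applied to the set $U=S_1$, which is allowed since $|S_1|\le R$. This yields $|N_G(S_1)|\ge c|S_1|\ge c$, using only that $|S_1|\ge 1$.

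Combining the two bounds gives $c\le |N_G(S_1)|\le k$, contradicting the assumption $k<c$. Hence $|S_1|>R$, as claimed. There is no real obstacle here; the content is simply that a small component could not have enough room in $S$ to host its expanded neighborhood.
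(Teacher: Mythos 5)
Your proof is correct and follows essentially the same contradiction argument as the paper: assume $|S_1|\le R$, note $N_G(S_1)\subseteq S$ so $|N_G(S_1)|\le k$, apply the expander property to $S_1$ to get $|N_G(S_1)|\ge c|S_1|\ge c$, and contradict $k<c$.
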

\begin{proof}
  Assume otherwise. Since any external neighbour of a vertex from $S_1$ must
  be in $S$, we have that
  \begin{equation*}
    c > k = |S| \ge |N(S_1)| \ge c|S_1| \ge c,
  \end{equation*}
  which is a contradiction.
\end{proof}

The following is a slight improvement of~\cite{BFHK}*{Lemma 5.1}.
\begin{lemma}\label{rc_lemma}
  For every positive integer $k$, if $G=([n],E)$ is an $(R,c)$-expander such
  that $c\ge k$ and $R(c+1)\ge\frac{1}{2}\left(n+k\right)$, then $G$ is
  $k$-vertex-connected.
\end{lemma}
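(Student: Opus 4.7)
The plan is to argue by contradiction. I assume there exists a vertex cut $S\subseteq[n]$ of size $s\le k-1$, so that $G\smallsetminus S$ decomposes into components $S_1,\ldots,S_t$ (with $t\ge 2$), indexed so that $|S_1|\le\cdots\le|S_t|$. Since $s\le k-1<k\le c$, \cref{rc1} immediately gives $|S_1|>R$. Moreover $|S_j|\ge|S_1|>R$ for every $j$, and in particular $|V\smallsetminus(S\cup S_1)|=\sum_{j\ge 2}|S_j|>R$, so I can pick two ``probe'' subsets of size exactly $R$: a set $A\subseteq S_1$ and a set $A'\subseteq V\smallsetminus(S\cup S_1)$.

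Next I will use the $(R,c)$-expansion on both probes. For $A$, every external neighbour lies in $S\cup(S_1\smallsetminus A)$, because there are no edges between $S_1$ and any other component once $S$ is removed; hence
\begin{equation*}
  cR\le|N(A)|\le s+|S_1|-R,\qquad\text{i.e.}\qquad |S_1|\ge(c+1)R-s.
\end{equation*}
For $A'$, similarly $N(A')\subseteq V\smallsetminus(S_1\cup A')$ (the set $S_1$ is insulated from $A'$ by the cut $S$), and so
\begin{equation*}
  cR\le|N(A')|\le n-|S_1|-R,\qquad\text{i.e.}\qquad |S_1|\le n-(c+1)R.
\end{equation*}

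Chaining the two inequalities yields $2(c+1)R\le n+s\le n+k-1$, which contradicts the hypothesis $R(c+1)\ge\tfrac{1}{2}(n+k)$. Hence no such separator $S$ exists and $G$ is $k$-vertex-connected. The argument is entirely elementary once \cref{rc1} is in hand; the only mildly delicate point to check carefully is that the second probe $A'$ has enough room to live in (guaranteed by $|S_j|>R$ for all $j$, not merely $j=1$), and that neighbours of the two probes cannot escape into the ``other side'' of the cut. No step looks like a real obstacle; this should be a short, clean proof.
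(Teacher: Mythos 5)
Your proof is correct and follows essentially the same approach as the paper: after invoking \cref{rc1} to get $|S_1|>R$, both arguments plant two $R$-sized probe sets on opposite sides of the cut and use the expansion of each to reach $2R(c+1)\le n+|S|$, contradicting the hypothesis. The only cosmetic differences are that you place the second probe in $V\smallsetminus(S\cup S_1)$ rather than in the single component $S_2$, and you chain two bounds on $|S_1|$ instead of doing inclusion–exclusion on $N^+(A_1)\cup N^+(A_2)$.
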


\begin{proof}
  Assume otherwise; let $S\subseteq[n]$ with $|S|=k-1$ be a disconnecting set
  of vertices.  Denote the connected components of $G\smallsetminus S$ by
  $S_1,\ldots,S_t$, so that $1\le|S_1|\le\ldots\le|S_t|$ and $t\ge 2$.  It
  follows from \cref{rc1} that $|S_1|>R$.

  Take $A_i\subseteq S_i$ for $i\in[2]$ with $|A_i|=R$. Since any common
  neighbour of $A_1$ and $A_2$ must lie in $S$, it follows that
  \begin{align*}
    n &\ge |S_1\cup S_2\cup N(S_1) \cup N(S_2)|\\
      &\ge |N^+(A_1) \cup N^+(A_2)|\\
      &= |N^+(A_1)| + |N^+(A_2)| - |N(A_1)\cap N(A_2)|\\
      &\ge 2R(c+1) - |S| \ge n + 1,
  \end{align*}
  which is a contradiction.
\end{proof}

The reason we study $(R,c)$-expanders is the fact that they entail some
pseudo-random properties, from which we will infer the properties that are
considered in this paper, namely, being Hamiltonian, admitting a perfect
matching, and being $k$-vertex-connected.

\subsection{Properties of random graphs}
In the following technical lemma we establish properties of random
graphs to be used later to prove \cref{thm:gnp}.

\begin{theorem}\label{p16}
  Let $C=150$ and let $C\le\alpha=\alpha(n)\le\frac{n}{\ln{n}}$.  Let
  $p=p(n) = \alpha\cdot\frac{\ln{n}}{n}$, and let $G\sim\gnp$. Then, \whp{},
  \begin{description}
    \item[\namedlabel{P:connected}{(P1)}] $G$ is connected,
    \item[\namedlabel{P:regular}{(P2)}] For every $v\in[n]$,
      $|d(v)-\alpha\ln{n}|\le 2\sqrt{\alpha}\ln{n}$; in particular,
      $\frac{5\alpha}{6}\ln{n}\le d(v) \le \frac{4\alpha}{3}\ln{n}$,
    \item[\namedlabel{P:boundary}{(P3)}] For every non-empty set $S\subseteq
      [n]$ with at most $0.8n$ vertices, $|\partial S| >
      \frac{|S||S^c|\alpha\ln{n}}{2n}$,
    \item[\namedlabel{P:expansion}{(P4)}] For every large enough constant
    $K>0$ (which does not depend on $\alpha$) and for every
    non-empty set $A\subseteq[n]$ with $|A|=a$, the following holds:
      \begin{itemize}
        \item If $\frac{n}{\alpha\ln{n}}\le a \le \frac{n}{\ln{n}}$ then
          \begin{equation*}
            \left|E\left(A,\left\{u\in N(A)\mid |E(u,A)|\ge
              Ka\alpha\ln{n}/n\right\}\right)\right| \le
              a\alpha\ln{n}/K;
          \end{equation*}
        \item If
            $a<\frac{n}{\alpha\ln{n}}$ then
          \begin{equation*}
            \left|E\left((A,\left\{u\in N(A)\mid |E(u,A)|\ge
            K\right\}\right)\right|
            \le a\alpha\ln{n}/\ln{K}.
          \end{equation*}
      \end{itemize}
    \item[\namedlabel{P:bad}{(P5)}] For every set $A$ with
      $|A|=n\left(\ln{\ln{n}}\right)^{1.5}/\ln{n}$ there exist at
      most $|A|/2$ vertices $v$ not in $A$ for which $\left|E(v,A)\right|\le
      \alpha\left(\ln{\ln{n}}\right)^{1.5}/2$,
    \item[\namedlabel{P:spread}{(P6)}] If $\alpha<\ln^2{n}$ then for every
      $v\in[n]$, $0\le r\le\frac{\ln{n}}{15\ln{\ln{n}}}$, and $w\in N(v,r)$
      we have that $|E(w,B(v,r))|\le 5$.
  \end{description}
\end{theorem}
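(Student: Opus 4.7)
The plan is to verify each of (P1)--(P6) via the same recipe: bound the failure probability for a fixed instance of the relevant parameters using the Chernoff estimates of \cref{chernoff,chernoff:cor,chernoff:trivial}, then absorb the union bound using the generous slack provided by $\alpha \ge C = 150$.

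The first three properties are standard. Connectivity (P1) is immediate from $p \gg \ln n / n$. For (P2), $d(v) \sim \bin(n-1, p)$ has mean $\sim \alpha \ln n$, so \cref{chernoff} applied with deviation $2\sqrt{\alpha}\ln n$ yields failure probability $o(1/n)$ per vertex. For (P3), for a fixed $S$ with $|S| = s \le 0.8 n$, $|\partial S| \sim \bin(s(n-s), p)$ has mean $\mu \ge s\alpha\ln n/5$, and \cref{chernoff_ineq_low} with $a = \mu/2$ gives failure probability $\le \exp(-\mu/8) \le \exp(-s\alpha\ln n/40)$; the union bound $\binom{n}{s}\exp(-\Omega(\alpha s \ln n))$ is beaten term-by-term since $\alpha \ge 150$.

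Property (P4) is the crux. Fix $A$ of size $a$ and write $\mu = ap$. I would stratify the ``heavy'' vertices dyadically, letting $W_j = \{u \notin A : |E(u, A)| \in [2^j T, 2^{j+1} T)\}$, with $T = K\mu$ in the first regime (where $\mu \ge 1$) and $T = K$ in the second (where $\mu < 1$). Observing that $|W_j| \ge w$ forces some $w$-subset to receive at least $w \cdot 2^j T$ edges from $A$, \cref{chernoff:trivial} gives
\begin{equation*}
  \pr{|W_j| \ge w} \le \binom{n}{w} \left(\frac{e\mu}{2^j T}\right)^{w \cdot 2^j T}.
\end{equation*}
For $K$ a sufficiently large absolute constant, this decays super-exponentially in both $w$ and $j$, forcing $|W_j|$ to be small. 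The contribution of layer $j$ to the total heavy-edge count is at most $2^{j+1} T \cdot |W_j|$, and the geometric series sums to within the target budgets $a\alpha\ln n / K$ and $a\alpha\ln n / \ln K$ respectively. A final union bound over $A \in \binom{[n]}{a}$ is absorbed by the factor $(e/K)^{K\mu}$ (first regime) or $(e\mu/K)^K$ (second regime) left over from the tail. The subtle case is the border of the first regime where $\mu = \Theta(1)$ and $K\mu$ is only a moderately large constant; this is what forces $K$ to be chosen sufficiently large, and is the step I expect to demand the most care.

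For (P5), fix $A$ of size $a = n(\ln\ln n)^{1.5}/\ln n$, so $|E(v, A)|$ has mean $\mu = \alpha(\ln\ln n)^{1.5}$ for each $v \notin A$. Each such $v$ is ``bad'' independently with probability $q \le \exp(-c\alpha(\ln\ln n)^{1.5})$ by \cref{chernoff_ineq_low} (with $c = \varphi(-1/2)$), so by \cref{chernoff:trivial} the probability that more than $a/2$ are bad is at most $(2enq/a)^{a/2}$; since $nq/a = n^{-\omega(1)}$ for $\alpha \ge 150$, this beats $\binom{n}{a}$ with room to spare. For (P6), I would first apply (P2) to deduce $|B(v, r)| \le (1.5\alpha\ln n)^r \le n^{1/4}$ whp for all $v$ and all $r \le \ln n/(15\ln\ln n)$ under $\alpha \le \ln^2 n$. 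For any such $v, r$ and any $w \in N(v, r)$, I would decompose $E(w, B(v, r))$ into edges to $B(v, r-1)$ (exposed during the BFS that constructs $B(v, r)$) and edges to $N(v, r) \setminus \{w\}$ (independent of that exposure); each count is stochastically dominated by $\bin(n^{1/4}, p)$, so \cref{chernoff:trivial} yields $\pr{|E(w, B(v, r))| \ge 6} \le (n^{1/4} p)^6 = n^{-O(1)}$, leaving ample room to union over $v$, $r$, and $w$.
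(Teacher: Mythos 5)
Your verification of \ref{P:connected}, \ref{P:regular}, \ref{P:boundary} and \ref{P:bad} follows the same route as the paper --- Chernoff plus union bound with the $\alpha\ge 150$ slack absorbing the entropy --- though in \ref{P:bad} your aside that $nq/a=n^{-\omega(1)}$ is not literally true for constant $\alpha$ (there $q=\exp\bigl(-\Theta((\ln\ln n)^{1.5})\bigr)$ is only quasi-polynomially small); what actually closes the union bound is that the exponent $a\cdot\alpha(\ln\ln n)^{1.5}$ beats $a\ln(n/a)\asymp a\ln\ln n$, which your structure delivers even though the stated reason is off. For \ref{P:spread} you use the same BFS-exposure idea as the paper, but your phrasing ``edges to $B(v,r-1)$ (exposed during the BFS) \ldots stochastically dominated by $\bin(n^{1/4},p)$'' conflates two incompatible claims: under a full BFS exposure those edges are already determined (not random), and the conditioning $w\in N(v,r)$ positively biases their count. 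The paper's version implicitly uses a lazy exposure in which only one edge per discovered vertex is revealed, after which the remaining $\le |B(v,r)|$ pairs are fresh --- giving $|E(w,B(v,r))|\preceq 1+\bin(|B(v,r)|,p)$; you need to say this, though the resulting polynomial decay is unchanged.

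The genuinely different step is \ref{P:expansion}. The paper uses a single-threshold scheme: it defines $B_0$ to be the heavy vertices, shows $|B_0|\preceq\bin(n,e^{-K})$ so $|B_0|\le 3e^{-K}n$ with probability $1-c^n$, and then union-bounds over choices of a superset of $B_0$ together with a forced set of $anp/K$ edges into $A$, multiplying by $p^{anp/K}$. You instead do a dyadic stratification $W_j=\{u:|E(u,A)|\in[2^jT,2^{j+1}T)\}$, bound each $|W_j|$ and sum a geometric series over $j$. Both are valid; your version avoids the paper's somewhat delicate conditional event ``$|B_0|\le 3e^{-K}n$'' at the cost of tracking $O(\log n)$ scales. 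One point you slightly mislocate: the truly delicate regime is not so much $\mu=\Theta(1)$ per se, but rather the large-$j$ tail (large $\alpha$, $a$ near $n/(\alpha\ln n)$), where the scheduled $w_j$ drops below $1$ and you must instead show $W_j=\varnothing$; there the degree bound of \ref{P:regular} ($d(u)\le\frac43\alpha\ln n$) truncates $j$ at $2^j\lesssim n/(Ka)$ and the bound $\binom{n}{a}\,n\,(e/(2^jK))^{2^jK\mu}$ does stay $o(1)$ once $K$ is large, but you should verify that explicitly rather than leaving it to the phrase ``absorbed by the leftover factor.'' With that caveat, the dyadic route is sound and arguably cleaner than the paper's two-stage conditioning.
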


Property \ref{P:connected} is well-known (see, e.g., \cite{FK}), so we omit the
proof here.

\begin{proof}[Proof of \ref{P:regular}]
  We note that $d(v)\sim\bin(n-1,p)$. Denote $\mu=\E{d(v)}=(n-1)p$.  Fix
  a vertex $v\in[n]$. Using Chernoff bounds (\cref{chernoff}) we have
  that
  \begin{align*}
    \pr{d(v) \le \alpha\ln{n} - 2\sqrt{\alpha}\ln{n}}
    &\le \pr{d(v) \le \mu - \sqrt{3\alpha}\ln{n}}\\
    &\le \exp\left(-\frac{3\alpha\ln^2{n}}{2\mu}\right)
    = o\left(n^{-1}\right),
  \end{align*}
  and that
  \begin{align*}
    \pr{d(v) \ge \alpha\ln{n} + 2\sqrt{\alpha}\ln{n}}
    &\le \pr{d(v) \ge \mu + 2\sqrt{\alpha}\ln{n}}\\
    &\le \exp\left(-\frac{4\alpha\ln^2{n}}
      {2\left(\mu+\frac{2}{3}\sqrt{\alpha}\ln{n}\right)}\right)
    \le \exp\left(-\frac{6}{5}\ln{n}\right)
    = o\left(n^{-1}\right).
  \end{align*}
  The union bound over all vertices $v\in [n]$ yields the desired result.
  Since $\alpha\ge 150$ we also ensure that for every $v\in[n]$,
  \begin{equation*}
    \frac{5\alpha}{6}\ln{n} \le d(v) \le \frac{4\alpha}{3}\ln{n}.\qedhere
  \end{equation*}
\end{proof}

\begin{proof}[Proof of \ref{P:boundary}]
  Fix a set $S\subseteq [n]$ with $1 \le |S|=s \le 0.8n$. We note that
  $|\partial S|\sim\bin(s(n-s),p)$, thus by \cref{chernoff} we have that
  \begin{equation*}
    \pr{|\partial S| \le \frac{1}{2}s(n-s)p}
    \le \exp\left(-\frac{1}{8}s(n-s)p\right).
  \end{equation*}
  Let $F$ be the event ``$\exists S$ such that
  $|\partial S|\le\frac{1}{2}s(n-s)p$''.  The union bound gives
  \begin{align*}
    \pr{F} &\le \sum_{s=1}^{0.8n}\binom{n}{s}
    \exp\left(-\frac{1}{8}s(n-s)p\right)\\
    &\le \sum_{s=1}^{0.8n}
    \exp\left(s\left( 1+\ln{n}-\ln{s}-\frac{1}{8}(n-s)p \right)\right)\\
    &\le \sum_{s=1}^{0.8n}
    \exp\left(s\left( 1+\ln{n}-\ln{s}-\frac{\alpha}{40}\ln{n} \right)\right)
    = o(1),
  \end{align*}
  since $\alpha>80$.
\end{proof}

\begin{proof}[Proof of \ref{P:expansion}]
  Fix $A$ with $|A|=a$, and suppose first that $\frac{n}{\alpha\ln{n}}\le
  a\le\frac{n}{\ln{n}}$. Let
  \begin{equation*}
    B_0 = \left\{u\in N(A)\mid |E(b,A)|\ge Kap\right\},
  \end{equation*}
  for large $K$ to be determined later.  For a vertex $u\notin A$, the random
  variable $|E(u,A)|$ is binomially distributed with $a$ trials and success
  probability $p$, and these random variables are independent for different
  vertices $u$.  Thus, using \cref{chernoff:trivial} we have that for large
  enough $K$,
  \begin{equation*}
    \pr{|E(b,A)|\ge Kap}
    \le \left(\frac{e}{K}\right)^{Kap}\le e^{-K}.
  \end{equation*}
  Thus $|B_0|$ is stochastically dominated by a
  binomial random variable with $n$ trials and success probability $e^{-K}$.
  It follows again by \cref{chernoff:trivial} that $\pr{|B_0|>3e^{-K}n} \le
  c^n$ for some $0<c<1$.  Since $a\le n/\ln{n}$,
  $n\binom{n}{a}=o\left(c^{-n}\right)$.  Thus by the union bound,
  \begin{align*}
    \pr{\exists A,\ |A|=a:\ |E(A,B_0)| > \frac{anp}{K}}
    &\le \binom{n}{a}
    \left(c^n + \pr{|E(A,B_0)| > \frac{anp}{K}\mid |B_0| \le 3e^{-K}n}\right)\\
    &\le o\left(n^{-1}\right)
    + \binom{n}{a}\binom{n}{3e^{-K}n}\binom{3ae^{-K}n}{anp/K}p^{anp/K}\\
    &\le o\left(n^{-1}\right)
    + \binom{n}{4e^{-K}n}\binom{3ae^{-K}n}{anp/K}p^{anp/K}\\
    &\le o\left(n^{-1}\right) + e^{4Ke^{-K}n} \cdot \left(9Ke^{-K}\right)^{anp/K}\\
    &= o\left(n^{-1}\right)
    + \left(e^{4Ke^{-K}} \cdot \left(9Ke^{-K}\right)^{ap/K}\right)^n
    = o\left(n^{-1}\right),
  \end{align*}
  for large enough $K$.
  Now suppose $a\le \frac{n}{\alpha\ln{n}}$. Let
  \begin{equation*}
    B_0 = \left\{u\in N(A)\mid |E(b,A)|\ge K\right\}.
  \end{equation*}
  From \ref{P:regular} we know that the number of edges going out from
  $A$ is \whp{} at most $4anp/3$. Given that, $|B_0|\le 2anp/K$. Let $F_a$ be
  the event ``there exists $A$, $|A|=a$, such that
  $|E(A,B_0)|>\frac{anp}{\ln{K}}$''.  Thus,
  \begin{align*}
    \pr{F_a \mid \Delta(G)\le 4np/3}
    &\le\binom{n}{a}
    \binom{n}{2anp/K}\binom{2a^2np/K}{anp/\ln{K}}p^{anp/\ln{K}}\\
    &\le \left[
    n^{1/(np)}
    \left(\frac{eK}{2ap}\right)^{2/K}
    \left(\frac{2eap\ln{K}}{K}\right)^{1/\ln{K}}
    \right]^{anp}
    \\
    &\le \left[
    e^{1/\alpha}
    \left(\frac{eK}{2}\right)^{2/K}
    \left(\frac{2e\ln{K}}{K}\right)^{1/\ln{K}}
    (ap)^{1/\ln{K}-2/K}
    \right]^{anp} = o\left(n^{-1}\right),
  \end{align*}
  for large enough $K$.
  Taking the union bound over all cardinalities $1\le a\le n/\ln{n}$ implies
  that the claim holds \whp{} in both cases.
\end{proof}

\begin{proof}[Proof of \ref{P:bad}]
  Fix a set $A\subseteq[n]$ with
  $|A|=\Lambda=n\left(\ln{\ln{n}}\right)^{1.5}/\ln{n}$. We say that a vertex
  $v\notin A$ is \emph{bad} if $\left|E(v,A)\right|\le\Lambda p/2$.  Since
  $|E(v,A)|\sim\bin(\Lambda,p)$, Chernoff bounds (\cref{chernoff}) give that
  the probability that $v$ is bad with respect to $A$ is at most
  $\exp(-\Lambda p/8)$.

  Let $U_A$ be the set of bad vertices with respect to $A$.  We now show that
  $U_A$ is typically not too large. To this end, note that $|U_A|$ is
  stochastically dominated by a binomial random variable with $n$ trials and
  success probability $\exp(-\Lambda p/8)$. Thus, using \cref{chernoff:trivial},
  we have that
  \begin{equation*}
    \pr{\left|U_A\right|\ge \Lambda/2}
    \le \binom{n}{\Lambda/2}\exp\left(-\Lambda^2 p/16\right).
  \end{equation*}
  The probability that there exists $A$ of cardinality $\Lambda$  whose set
  of bad vertices is of cardinality at least $\Lambda/2$ is thus at most
  \begin{align*}
    \pr{\exists A:\ |A|=\Lambda,\ \left|U_A\right|\ge\Lambda/2}
    &\le \binom{n}{\Lambda}\binom{n}{\Lambda/2}
      \exp\left(-\Lambda^2 p/16\right)\\
    &\le \left(\frac{en}{\Lambda}\right)^{2\Lambda}
      \exp\left(-\Lambda^2 p/16\right)\\
    &\le \exp\left(3\Lambda\ln{(n/\Lambda)}-\Lambda^2p/16\right)\\
    &\le \exp\left(3\cdot\frac{n}{\ln{n}}\left(\ln{\ln{n}}\right)^{2.5}
    -\frac{n}{\ln{n}}\left(\ln{\ln{n}}\right)^3\right)\\
    &\le \exp\left(-\frac{n}{\ln{n}}\left(\ln{\ln{n}}\right)^{2.9}\right)
      = o(1),
  \end{align*}
  here we used $\alpha>16$.
  Noting that $\Lambda p = \alpha\left(\ln{\ln{n}}\right)^{1.5}$, the claim
  follows.
\end{proof}

\begin{proof}[Proof of \ref{P:spread}]
  Assume $\alpha<\ln^2{n}$ and let $\lambda=\frac{\ln{n}}{15\ln{\ln{n}}}$. Fix
  $v\in[n]$, $0\le r\le\lambda$, expose a BFS tree $T$, rooted at $v$, of
  depth $r$, and fix a vertex $w\in N(v,r)$.  Note that in $T$, the vertex $w$
  is a leaf, and thus has a single neighbour in $B(v,r)$.  We now expose the
  rest of the edges between $w$ and $B(v,r)$.  Note that by definition the
  neighbours of $w$ must be in $N(v,r-1)\cup N(v,r)$, and by \ref{P:regular}
  the cardinality of that set is at most $(c\ln^3{n})^{r}$ for some $c>1$, hence
  by \cref{chernoff:trivial},
  \begin{equation*}
    \pr{|E(w,B(v,r))|\ge 6}
    \le \left(\frac{e(c\ln^3{n})^rp}{6}\right)^6
    \le \left(\frac{(c\ln^3{n})^{\lambda+1}}{2n}\right)^6
    = o\left(n^{-3}\right).
  \end{equation*}
  It follows by the union bound that
  \begin{equation*}
    \pr{\exists v\in[n], 0\le r\le\lambda, w\in N(v,r):\
    |E(w,B(v,r))|\ge 6}\\
    \le n\cdot \lambda \cdot n \cdot o\left(n^{-3}\right) = o(1).
    \qedhere
  \end{equation*}
\end{proof}

For $\alpha=\alpha(n)>0$, a graph for which
\ref{P:connected},\ldots,\ref{P:spread} hold will be called
\emph{$\alpha$-pseudo-random}.

\subsection{Properties of random walks}
Throughout this section, $G$ is a graph with vertex set $[n]$, having
properties \ref{P:connected}, \ref{P:regular} and \ref{P:boundary}
for some $\alpha>0$, and $X$ is a $\frac{1}{2}$-\emph{lazy} simple random walk
on $G$, starting at some arbitrary vertex $v_0$.  By $\frac{1}{2}$-lazy we mean
that it stays put with probability $\frac{1}{2}$ at each time step, and moves
to a uniformly chosen random neighbour otherwise.
Our purpose in this section is to show that
$X$ mixes well, in a sense that will be further clarified below. To this
end, we shall need some preliminary definitions and notations.

The \emph{transition probability} of $X$ from $u$ to $v$ is the probability
\begin{equation*}
  p_{uv} = \pr{X_{t+1}=v\mid X_t=u} =
  \pr{X_1=v\mid X_0=u}.
\end{equation*}
For $k\in\mathbb{N}$ we similarly denote
\begin{equation*}
  p_{uv}^{(k)} = \pr{X_{t+k}=v\mid X_t=u} =
  \pr{X_k = v\mid X_0 = u}.
\end{equation*}
We note that the stationary distribution of $X$ is given by
\begin{equation*}
  \pi_v = \frac{d(v)}{\sum_{u\in [n]}d(u)} = \frac{d(v)}{2\left|E\right|},
\end{equation*}
and for every subset $S\subseteq[n]$,
\begin{equation*}
  \pi_S = \sum_{v\in S}\pi_v.
\end{equation*}
The \emph{total variation distance} between $X_t$ and the stationary
distribution is
\begin{equation*}
  \dtv{X_t}{\pi} = \sup_{S\subseteq [n]}\left|\pr{X_t\in S} - \pi_S\right|,
\end{equation*}
and as is well-known, we have that
\begin{equation*}
  \dtv{X_t}{\pi} = \frac{1}{2}\sum_{v\in[n]}\left|\pr{X_t=v}-\pi_v\right|.
\end{equation*}

Now, let $\left(Y_t\right)_{t\ge 0}$ be the \emph{stationary walk} on $G$; that
is, the $\frac{1}{2}$-lazy simple random walk for which for every $v\in[n]$,
$\pr{Y_0=v}=\pi_v$. We note for later use that there exists a standard coupling
of $X,Y$ under which for every $t$,
\begin{equation*}
  \pr{\exists s\ge t\mid X_s\ne Y_s} \le \dtv{X_t}{\pi}.
\end{equation*}
Our goal is therefore to find not too large $t$'s for which the total
variation distance is very small. That is, we wish to bound the
\emph{$\xi$-mixing time} of $X$, which is given by
\begin{equation*}
  \tau(\xi)
  = \min\left\{t\ge 0\mid \forall s\ge t,\
    \dtv{X_s}{\pi}<\xi\right\}.
\end{equation*}
A theorem of Jerrum and Sinclair~\cite{JS89} will imply that the
$\xi$-mixing time of $X$ is indeed small. Their bound uses the notion
of \emph{conductance}: the conductance of a cut $(S,S^c)$ with respect to
$X$ is defined as
\begin{equation*}
  \phi_X(S) = \frac{\sum_{v\in S,\ w\in S^c}\pi_v p_{vw}}
  {\min\left(\pi_S,\pi_{S^c}\right)},
\end{equation*}
which can be equivalently written in our case as
\begin{equation*}
  \phi_X(S) = \frac{\left|\partial{S}\right|}
  {2\min\left(\sum_{v\in S}d(v),\sum_{w\in S^c}d(w)\right)}.
\end{equation*}
The conductance of $G$ with respect to $X$ is defined as
\begin{equation*}
  \Phi_X(G) = \min_{\substack{S\subseteq [n]\\0<\pi_S\le1/2}} \phi_X(S).
\end{equation*}

\begin{claim}\label{jerrum}
  Let $\pi_{\min} = \min_{v}\pi_v$. For every $\xi>0$,
  \begin{equation*}
    \tau(\xi) \le\frac{2}{\Phi_X(G)^2}
    \left(\ln\left(\frac{1}{\pi_{\min}}\right)
    +\ln\left(\frac{1}{\xi}\right)\right).
  \end{equation*}
\end{claim}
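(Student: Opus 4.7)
The plan is to obtain the bound as an essentially immediate corollary of the Cheeger-type inequality for Markov chains proved by Jerrum and Sinclair \cite{JS89}. Since $X$ is a $\frac{1}{2}$-lazy random walk on a connected graph, its transition matrix $P$ is reversible with respect to $\pi$ and, thanks to the laziness, has all eigenvalues in $[0,1]$ with $1$ a simple eigenvalue; let $\lambda_\star$ denote the second-largest eigenvalue. The key input I would quote from \cite{JS89} is the conductance bound $1-\lambda_\star \ge \Phi_X(G)^2/2$, which converts the combinatorial quantity $\Phi_X(G)$ into a spectral gap.

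From there the remainder is standard spectral mixing. I would expand $p_{v_0\cdot}^{(t)}/\pi_\cdot - 1$ in an $\ell^2(\pi)$-orthonormal basis of eigenfunctions of $P$ to get the pointwise estimate $|p_{v_0 v}^{(t)}-\pi_v| \le \sqrt{\pi_v/\pi_{v_0}}\,\lambda_\star^t$. Summing over $v$ via Cauchy--Schwarz and plugging into the identity $\dtv{X_t}{\pi} = \tfrac{1}{2}\sum_v |p_{v_0 v}^{(t)}-\pi_v|$ yields
\begin{equation*}
  \dtv{X_t}{\pi}
  \le \frac{\lambda_\star^t}{2\sqrt{\pi_{v_0}}}
  \le \frac{1}{2\sqrt{\pi_{\min}}}\exp\left(-\frac{t\,\Phi_X(G)^2}{2}\right),
\end{equation*}
where the last inequality uses $1-x\le e^{-x}$ applied to $x=\Phi_X(G)^2/2$.

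To finish, I would solve for the smallest $t$ making the right-hand side at most $\xi$. Taking logarithms gives that
\begin{equation*}
  t \ge \frac{2}{\Phi_X(G)^2}\left(\tfrac{1}{2}\ln(1/\pi_{\min})+\ln(1/\xi)-\ln 2\right)
\end{equation*}
suffices, and this is strictly dominated by the claimed bound. Since $\dtv{X_s}{\pi}$ is non-increasing in $s$ for a reversible chain, the bound then persists for all $s\ge t$, giving the stated estimate on $\tau(\xi)$. I do not anticipate any real difficulty: the only substantive step is invoking the Jerrum--Sinclair Cheeger inequality, and everything after that is routine unwinding of definitions.
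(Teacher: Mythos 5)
Your argument is correct and reaches the same bound, but it does so by a genuinely different route than the paper: the paper invokes \cite{JS89}*{Corollary 2.3} as a black box, which directly bounds the $\xi$-\emph{uniform} mixing time $\tau'(\xi)$ by $\frac{2}{\Phi_X(G)^2}(\ln(1/\pi_{\min})+\ln(1/\xi))$, and then observes that $\tau(\xi)\le\tau(\xi/2)\le\tau'(\xi)$; you instead quote only the Cheeger-type inequality $1-\lambda_\star\ge\Phi_X(G)^2/2$ from \cite{JS89} and re-derive the total-variation mixing bound from the spectral gap via the standard $\ell^2(\pi)$ decomposition. Your approach is more self-contained (it re-proves what the paper treats as a citation) and is in fact slightly sharper, since it avoids the factor-of-two slack in the paper's conversion from uniform to total-variation distance. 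One small caveat in the writeup: the pointwise estimate $|p^{(t)}_{v_0 v}-\pi_v|\le\sqrt{\pi_v/\pi_{v_0}}\,\lambda_\star^t$ does not, by direct summation, give $\dtv{X_t}{\pi}\le\frac{\lambda_\star^t}{2\sqrt{\pi_{v_0}}}$ — summing it naively introduces an extra $\sum_v\sqrt{\pi_v}\le\sqrt{n}$; the factor-free bound comes from applying Cauchy--Schwarz once in $\ell^2(\pi)$, i.e.\ $4\,\dtv{X_t}{\pi}^2\le\sum_v\pi_v\bigl(p^{(t)}_{v_0 v}/\pi_v-1\bigr)^2\le\lambda_\star^{2t}/\pi_{v_0}$, bypassing the pointwise estimate entirely. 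Since $\pi_{\min}\le 1/n$, even the $\sqrt{n}$-lossy version would still yield the claimed bound on $\tau(\xi)$, so this is a cosmetic rather than substantive issue, but the pointwise display is a detour from the clean $\ell^2$ argument your final inequality actually uses.
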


\begin{proof}
  Let
  \begin{equation*}
    \tau'(\xi) = \min\left\{t\ge 0\mid \forall s\ge t,\
      u,v\in[n],\ \frac{\left|p_{uv}^{(s)}
      -\pi_v\right|}{\pi_v}<\xi\right\}
  \end{equation*}
  be the $\xi$-uniform mixing time of $X$.
  Noting that the laziness of the walk implies $p_{uu}\ge\frac{1}{2}$ for every
  vertex $u$, Corollary 2.3 in~\cite{JS89} states that
  \begin{equation*}
    \tau'(\xi) \le\frac{2}{\Phi_X(G)^2}
    \left(\ln\left(\frac{1}{\pi_{\min}}\right)
    +\ln\left(\frac{1}{\xi}\right)\right).
  \end{equation*}
  Let $t=\tau'(\xi)$; thus, for all $s\ge t$, $u,v\in[n]$,
  $\left|p_{uv}^{(s)}-\pi_v\right|<\xi\pi_v$. Fix $s\ge t$. We have
  that
  \begin{equation*}
    \dtv{X_s}{\pi}
    = \frac{1}{2}\sum_{v\in[n]}\left|\pr{X_s=v}-\pi_v\right|\\
    \le \frac{\xi}{2}\sum_{v\in[n]}\pi_v = \frac{\xi}{2},
  \end{equation*}
  thus $\tau(\xi)\le \tau(\xi/2)\le t=\tau'(\xi)$ and the claim follows.
\end{proof}

\begin{corollary}\label{jerrum_cor}
  For $\xi>0$, $\tau(\xi)\le 1800\ln(2n/\xi)$.
\end{corollary}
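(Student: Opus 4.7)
The plan is to apply Claim \ref{jerrum} directly, so the task reduces to lower bounds on $\pi_{\min}$ and on the conductance $\Phi_X(G)$ using the assumed properties \ref{P:regular} and \ref{P:boundary}. Both bounds should be absolute constants (independent of $\alpha$, $n$), after which plugging in will give the stated $1800\ln(2n/\xi)$ estimate with a little room to spare.

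First I would bound $\pi_{\min}$. By \ref{P:regular} every degree satisfies $\frac{5\alpha}{6}\ln n\le d(v)\le\frac{4\alpha}{3}\ln n$, so $2|E|=\sum_u d(u)\le n\cdot\frac{4\alpha}{3}\ln n$, and hence
\begin{equation*}
  \pi_v=\frac{d(v)}{2|E|}\ge\frac{5\alpha\ln n/6}{4\alpha n\ln n/3}=\frac{5}{8n}\ge\frac{1}{2n}.
\end{equation*}
In particular $\ln(1/\pi_{\min})\le\ln(2n)$.

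Next I would bound $\Phi_X(G)$ from below. Fix $S\subseteq[n]$ with $0<\pi_S\le 1/2$; then $\sum_{v\in S}d(v)\le|E|\le\frac{2n\alpha\ln n}{3}$, which combined with the uniform lower bound $d(v)\ge\frac{5\alpha}{6}\ln n$ from \ref{P:regular} forces $|S|\le 4n/5=0.8n$. This is exactly the regime in which \ref{P:boundary} applies and gives
\begin{equation*}
  |\partial S|>\frac{|S||S^c|\alpha\ln n}{2n}\ge\frac{|S|\cdot 0.2 n\cdot\alpha\ln n}{2n}=\frac{|S|\alpha\ln n}{10},
\end{equation*}
since $|S^c|\ge 0.2n$. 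Using the upper bound from \ref{P:regular} one step later, $\sum_{v\in S}d(v)\le|S|\cdot\frac{4\alpha\ln n}{3}$, which yields
\begin{equation*}
  \phi_X(S)=\frac{|\partial S|}{2\sum_{v\in S}d(v)}>\frac{|S|\alpha\ln n/10}{2|S|\cdot 4\alpha\ln n/3}=\frac{3}{80}.
\end{equation*}
Taking the minimum over $S$ gives $\Phi_X(G)\ge 3/80$.

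Finally, I would plug the two bounds into Claim \ref{jerrum}:
\begin{equation*}
  \tau(\xi)\le\frac{2}{\Phi_X(G)^2}\bigl(\ln(1/\pi_{\min})+\ln(1/\xi)\bigr)\le\frac{2\cdot 6400}{9}\ln(2n/\xi)<1800\ln(2n/\xi).
\end{equation*}
I do not expect a real obstacle here: the argument is a short calculation once one notices that the condition $\pi_S\le 1/2$ combined with \ref{P:regular} automatically puts $|S|$ in the range where \ref{P:boundary} can be invoked. The only mild subtlety is making sure the two bounds in \ref{P:regular} are used on the correct sides (lower bound to control $|S|$ and $\pi_{\min}$, upper bound to control $\sum_{v\in S}d(v)$), and that the constants leave enough slack to absorb the final factor into $1800$.
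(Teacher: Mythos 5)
Your proposal is correct and matches the paper's proof essentially step for step: both bound $\pi_{\min}\ge 5/(8n)$ via \ref{P:regular}, both deduce $|S|\le 0.8n$ from $\pi_S\le 1/2$ so that \ref{P:boundary} applies, both lower-bound the conductance by a constant (you keep $3/80$ where the paper rounds to $1/30$), and both plug into \cref{jerrum}. The minor difference in how the constant is tracked does not change the argument.
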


\begin{proof}
  We note that due to \ref{P:regular}, for every $v\in[n]$,
  \begin{equation*}
    \pi_v\ge \frac{5}{8n},
  \end{equation*}
  and thus for every $S\subseteq[n]$ with $0<\pi_S\le 1/2$ we have that
  \begin{equation*}
    \frac{1}{2} \ge \pi_S \ge |S|\cdot\frac{5}{8n},
  \end{equation*}
  hence $0<|S|\le\frac{4}{5}n$. Thus, according to
  \ref{P:regular},\ref{P:boundary},
  \begin{align*}
    \Phi_X(G) &= \min_{\substack{S\subseteq [n]\\0<\pi_S\le1/2}} \phi_X(S)\\
    &\ge \min_{\substack{S\subseteq [n]\\0<|S|\le4n/5}}
    \frac{|\partial S|}{2\sum_{v\in S}d(v)}\\
    &\ge \min_{\substack{S\subseteq [n]\\0<|S|\le4n/5}}
    \frac{\frac{|S||S^c|\alpha\ln{n}}{2n}}
    {2|S|\cdot \frac{4}{3}\alpha\ln{n}}\ge\frac{1}{30}.
  \end{align*}
  Plugging this into \cref{jerrum} we have
  \begin{equation*}
    \tau(\xi) \le 1800\left(\ln\left(\frac{8n}{5}\right)
    +\ln\left(\frac{1}{\xi}\right)\right)
    \le 1800\ln\left(\frac{2n}{\xi}\right).\qedhere
  \end{equation*}
\end{proof}

The following is an immediate corollary of the above discussion:
\begin{corollary}\label{buffer}
  Let $b = \tau(1/n) \le 3601\ln{n}$.  Conditioned on $X_0,\ldots,X_t$,
  there exists a coupling of
  $\left(X_{t+b+s}\right)_{s\ge 0}$ and $\left(Y_s\right)_{s\ge 0}$ under
  which
  \begin{equation*}
    \pr{\exists s\ge 0\mid X_{t+b+s}\ne Y_{s}} \le \frac{1}{n}.
  \end{equation*}
\end{corollary}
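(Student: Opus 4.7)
The plan is to combine three ingredients already in place: the explicit bound on $\tau(1/n)$ furnished by \cref{jerrum_cor}, the Markov property of the lazy walk $X$, and the standard optimal coupling of a chain with its stationary counterpart recalled immediately before the definition of the $\xi$-mixing time.

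First I would verify the numerical estimate on $b$: setting $\xi = 1/n$ in \cref{jerrum_cor} gives $\tau(1/n) \le 1800\ln(2n^2) = 3600\ln n + 1800\ln 2 \le 3601\ln n$ for all sufficiently large $n$, which is exactly the claimed bound.

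Next I would invoke the Markov property: conditional on $X_0,\ldots,X_t$, the shifted process $(X_{t+s})_{s\ge 0}$ is itself a $\tfrac{1}{2}$-lazy simple random walk on $G$, now starting from the (deterministic) vertex $X_t$. Consequently, by the definition of the $\xi$-mixing time and the choice $b = \tau(1/n)$, the conditional distribution of $X_{t+b}$ given $X_0,\ldots,X_t$ lies within total variation distance at most $1/n$ of the stationary distribution $\pi$.

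Finally, I would apply the standard coupling recalled in this section, which produces, for any Markov chain with a given initial distribution, a joint construction with a stationary copy $(Y_s)_{s\ge 0}$ under which the event of ever disagreeing in the future has probability at most the total variation distance between the two initial distributions. Applied to $(X_{t+b+s})_{s\ge 0}$ in the role of the non-stationary walk, this yields a coupling with $(Y_s)_{s\ge 0}$ that fails with probability at most $1/n$, as required. There is no substantive obstacle in this argument; the only point requiring attention is the bookkeeping of the conditioning on $X_0,\ldots,X_t$, which is handled cleanly by the Markov property, and the harmless relabelling of time coordinates between the shifted walk $(X_{t+b+s})_{s\ge 0}$ and the fresh stationary walk $(Y_s)_{s\ge 0}$.
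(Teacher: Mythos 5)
Your proposal is correct and follows exactly the route the paper intends when it calls this an ``immediate corollary of the above discussion'': read off the numerical bound on $\tau(1/n)$ from \cref{jerrum_cor}, use the Markov property so that conditioning on $X_0,\ldots,X_t$ reduces to a fresh lazy walk started at $X_t$, and then invoke the standard coupling with the stationary walk recalled just before the definition of the $\xi$-mixing time. The only points you flagged that merit care — the harmless time-reindexing of the stationary copy, and the fact that the uniform estimate $1800\ln(2n/\xi)$ applies regardless of the (conditioned) starting vertex — are indeed the only ones, and you handled both.
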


\section{Walking on a pseudo-random graph}\label{sec:pseudo}
In order to prove \cref{thm:gnp}, we will prove that the trace
$\Gamma=\Gamma^v_L(G)$ is \whp{} a good expander, in the sense that it
satisfies the following two properties:
\begin{description}
  \item[\namedlabel{E:small}{(E1)}]
    There exists $\beta>0$ such that every set $A\subseteq[n]$ of
    cardinality $|A|\le\frac{n}{\ln{n}}$ satisfies $|N_\Gamma(A)|\ge
    |A|\cdot \beta\ln{n}$;
  \item[\namedlabel{E:large}{(E2)}]
    There is an edge of $\Gamma$ between every pair of disjoint subsets
    $A,B\subseteq[n]$ satisfying $|A|,|B|\ge
    \frac{n\left(\ln{\ln{n}}\right)^{1.5}}{\ln{n}}$.
\end{description}

\begin{theorem}\label{blackbox}
  For every $\varepsilon>0$ there exist $C=C(\varepsilon)>0$
  and $\beta=\beta(\varepsilon)>0$ such that for
  every edge probability $p=p(n)\ge C\cdot \frac{\ln{n}}{n}$ and for every
  $v_0\in[n]$, a random graph $G\sim\gnp$ is \whp{} such that for
  $L=(1+\varepsilon)n\ln{n}$, the trace $\Gamma^{v_0}_L(G)$ of a simple random
  walk of length $L$ on $G$, starting at $v_0$, has the properties
  \ref{E:small} and \ref{E:large} \whp{}.
\end{theorem}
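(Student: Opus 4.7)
The plan is to first replace $G$ by a deterministic pseudo-random host graph. Setting $\alpha = pn/\ln n\ge C$, \cref{p16} guarantees that $G\sim\gnp$ is $\alpha$-pseudo-random \whp{}, so it suffices to prove that for any fixed $\alpha$-pseudo-random graph $G$, the trace of a length-$L$ simple random walk on $G$ satisfies \ref{E:small} and \ref{E:large} \whp{}. Since both properties are monotone increasing in the edge set, it is enough to establish them for a $\tfrac{1}{2}$-lazy walk of length $L'=(2-o(1))L$: the number of genuine moves in this lazy walk is at most $L$ \whp{} by a Chernoff bound, so its trace is contained in that of a simple walk of length $L$. Having reduced to the lazy setting we invoke \cref{jerrum_cor,buffer}: after a burn-in of $b = O(\ln n)$ steps the walk couples to the stationary walk $Y$ with probability $\ge 1-1/n$. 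Chopping the remaining $L'-b$ steps into $\Theta(n)$ consecutive blocks of length $b$ provides the basic technical device: at each block boundary the walk is within total variation $1/n$ of stationary, so block-wise contributions can be treated as essentially independent stationary samples, supplying the exponential concentration we will need.

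\textbf{Proof of \ref{E:small}.} Fix $A\subseteq [n]$ with $|A|=a\le n/\ln n$. By stationarity the expected number of visits of the lazy walk to $A$ is $L'\pi_A = \Theta(a\ln n)$, and the block coupling plus Chernoff per block yields
\begin{equation*}
  \pr{\left|\left\{t : X_t\in A\right\}\right| < c_1\, a\ln n} \le \exp(-c_2\, a\ln n)
\end{equation*}
for absolute constants $c_1,c_2>0$. A constant fraction of visits are followed by moves, each selecting a uniformly random neighbour; hence the walk generates $\Theta(a\ln n)$ moves out of $A$, whose destinations are distributed proportionally to $|E(u,A)|$. Property \ref{P:expansion} now plays its role: for a large absolute constant $K$, the edge-mass from $A$ to the set of ``high-mass'' neighbours (those with $\ge K$ or $\ge Ka\alpha\ln n/n$ edges to $A$, depending on the regime of $a$) is at most a $(1/\ln K)$-fraction of the full outgoing mass $\Theta(a\alpha\ln n)$. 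Thus at least a $(1-1/\ln K)$-fraction of the $\Theta(a\ln n)$ destinations land on ``low-mass'' vertices, each absorbing at most $K$ (resp.\ $Ka\alpha\ln n/n$) such landings; a balls-into-bins argument with bounded bin capacities yields $\Omega(a\ln n)$ distinct external vertices touched. The overall failure probability $\exp(-\Omega(a\ln n))$ easily beats the union bound $\binom{n}{a}\le (en/a)^a$ over all $A$.

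\textbf{Proof of \ref{E:large}.} Fix disjoint $A,B$ with $|A|,|B|\ge\Lambda:=n(\ln\ln n)^{1.5}/\ln n$; by monotonicity assume $|A|=\Lambda$. Property \ref{P:bad} says that at most $\Lambda/2$ vertices outside $A$ are ``bad'' with respect to $A$, so a set $B'\subseteq B$ of size $\ge\Lambda/2$ consists of ``good'' vertices $v$ with $|E(v,A)|\ge\alpha(\ln\ln n)^{1.5}/2$. At stationarity the probability that a single step sees the walk at some vertex of $B'$ and then moves into $A$ is
\begin{equation*}
  \sum_{v\in B'}\pi_v\cdot\frac{|E(v,A)|}{2\,d(v)}
  \;\ge\; c\cdot\frac{\Lambda}{n}\cdot\frac{(\ln\ln n)^{1.5}}{\ln n}
  \;=\;c\cdot\frac{(\ln\ln n)^{3}}{\ln^{2}n},
\end{equation*}
so in $L'$ stationary steps the expected number of $B'\to A$ transitions is $\Omega\!\left(n(\ln\ln n)^{3}/\ln n\right)$. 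The block-coupling argument converts this mean bound into a deviation bound
\begin{equation*}
  \pr{\text{no }B'\to A\text{ transition occurs}}
  \;\le\;\exp\!\left(-\Omega\!\left(\tfrac{n(\ln\ln n)^{3}}{\ln n}\right)\right),
\end{equation*}
which easily beats $\binom{n}{\Lambda}^{2}\le\exp(2\Lambda\ln(n/\Lambda))=\exp\!\left(O(n(\ln\ln n)^{2.5}/\ln n)\right)$ in the union bound over all pairs $(A,B)$; a single transition already places an edge of the trace between $A$ and $B$.

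\textbf{Main obstacle.} The principal technical difficulty is converting the Markovian dependence of the walk into exponential concentration strong enough to survive the above union bounds. The remedy is the mixing/coupling machinery of \cref{jerrum_cor,buffer}: decomposing the walk into $\Theta(n)$ blocks of length $b=O(\ln n)$ and coupling the start of each block to a stationary sample renders the per-block counts (visits to $A$, or $B'\to A$ transitions) effectively independent bounded random variables, so a block-wise Chernoff bound promotes to the global concentration needed. Combining these with the combinatorial inputs \ref{P:expansion} (for \ref{E:small}) and \ref{P:bad} (for \ref{E:large}) yields both expansion properties of the trace.
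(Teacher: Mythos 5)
Your high-level plan matches the paper's: pass to an $\alpha$-pseudo-random host via \cref{p16}, switch to a $\frac12$-lazy walk and bound the number of genuine moves, and then turn mixing into concentration via \cref{jerrum_cor,buffer}. That much is fine. Where the proposal diverges — and where it breaks — is the heart of the argument. You propose to Chernoff-bound the number of visits of the walk to the \emph{set} $A$ and then run a balls-into-bins analysis on the exit destinations. The paper instead proves a \emph{per-vertex} lower bound $\nu(v)\ge\rho\ln n$ for every $v\in[n]$ simultaneously (\cref{visits}), via the supermartingale of \cref{superm} and the re-visit bound \cref{comeback}, and then decouples the exit destinations using the $x_v^k\sim\unif(N_G(v))$ pre-sampled construction. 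These two routes are not equivalent, and the set-level version has a genuine gap in several places.

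First, ``block coupling plus Chernoff per block yields $\exp(-c_2 a\ln n)$'' is not justified: in a block of length $b=O(\ln n)$ the walk can visit $A$ anywhere between $0$ and $\Theta(\ln n)$ times, so the per-block count has large variance; a naive Hoeffding-style bound with block-range $O(\ln n)$ gives an exponent like $a^2/n$, not $a\ln n$, for small $a$. To get the right exponent you must replace ``visits in a block'' by a bounded indicator (``the block contributes at least one fresh exit'') and show that its mean is $\Omega(b\pi_A)$, which in turn requires ruling out re-visits within a block — exactly what the paper's \cref{comeback} (via \cref{superm}) delivers and which your proposal never touches. Second, even if one accepts a set-level count of $\Theta(a\ln n)$ exits from $A$, this does not ensure the exits are spread over the vertices of $A$: all the randomness used by the balls-into-bins step comes from choosing a uniform neighbour of \emph{whichever} vertex is exited, so a clumped trajectory (most exits from a handful of $v\in A$, each with only $\Theta(\alpha\ln n)$ potential landing sites) would defeat the count of distinct neighbours for larger $a$. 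The paper sidesteps this entirely, since $\nu(v)\ge\rho\ln n$ for every $v$ forces the spread. Third, the destinations of the exits are not unconditionally ``distributed proportionally to $|E(u,A)|$'': the destination sequence and the trajectory (hence the visit counts) are coupled, so one cannot both condition on having many visits and then treat the destinations as fresh. The clean way out is the $x_v^k$ coupling in the proof of \cref{trace:expansion:2} (and implicitly \cref{trace:expansion:1}), which fixes an i.i.d.\ stream of destinations per vertex once and for all and then observes that the expansion event is a function of $\{x_v^k\}_{k<\rho\ln n,\,v\in A}$ alone on the event $\{\nu(v)\ge\rho\ln n\ \forall v\}$. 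In short, the missing ingredient is the per-vertex visit theorem and the $x_v^k$-based decoupling; without these, the \ref{E:small} argument as written does not go through, and the \ref{E:large} bound also implicitly uses the same unaddressed re-visit control.
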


It will be convenient the consider a slight variation of this theorem, in
which the random walk is \emph{lazy} and the base graph is pseudo-random:
\begin{theorem}\label{lazy_version}
  For every sufficiently small $\varepsilon>0$, if
  $\alpha\ge 1500\varepsilon^{-2}$
  and $G$ is a $\alpha$-pseudo-random graph on the
  vertex set $[n]$, $v_0\in[n]$ and $L_2=(2+\varepsilon)n\ln{n}$,
  then the trace $\Gamma_{L_2}^{v_0}(G)$ of a $\frac{1}{2}$-lazy
  random walk of length $L_2$ on $G$, starting at $v_0$, has the
  properties \ref{E:small} and \ref{E:large} \whp{}.
\end{theorem}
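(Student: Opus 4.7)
The plan is to invoke the $\xi$-mixing bound of \cref{buffer}: after a burn-in of $b=O(\ln n)$ steps, the lazy walk can be coupled with a \emph{stationary} lazy walk $Y$ at total-variation cost $1/n$. Since $b=o(L_2)$, proving \ref{E:small} and \ref{E:large} for a stationary walk of length effectively $L_2$ will suffice.

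For property \ref{E:large}, fix disjoint $A,B\subseteq[n]$ with $|A|,|B|\ge\Lambda:=n(\ln\ln n)^{1.5}/\ln n$, and pass to subsets of size exactly $\Lambda$. Property \ref{P:bad} applied to $A$ shows that all but $\Lambda/2$ external vertices --- in particular all but $\Lambda/2$ vertices of $B$ --- have at least $\alpha(\ln\ln n)^{1.5}/2$ edges to $A$, giving $|E_G(A,B)|\ge \Lambda\alpha(\ln\ln n)^{1.5}/4$. Combined with $|E(G)|\le \tfrac23\alpha n\ln n$ from \ref{P:regular}, the per-step probability that the stationary walk traverses an edge of $E_G(A,B)$ is at least $q=\Omega((\ln\ln n)^3/\ln^2 n)$. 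I partition $Y$ into $N=L_2/b=\Theta(n)$ blocks of length $b$; by mixing, these blocks can be jointly coupled with i.i.d.\ stationary blocks at aggregate cost $o(1)$, so each block independently contains an $A$-$B$ traversal with probability $\Omega(bq)=\Omega((\ln\ln n)^3/\ln n)$. A Chernoff bound yields a per-pair failure probability $\exp(-\Omega(n(\ln\ln n)^3/\ln n))$, which defeats the $\binom{n}{\Lambda}^2\le \exp(O(n(\ln\ln n)^{2.5}/\ln n))$ union bound by a $(\ln\ln n)^{1/2}$ factor in the exponent.

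For property \ref{E:small}, fix $A$ with $a=|A|\le n/\ln n$. Property \ref{P:boundary} gives $|\partial_G A|\ge a\alpha\ln n/4$, and property \ref{P:expansion} with a large absolute constant $K$ lets me discard the few boundary edges landing on vertices of high back-multiplicity: at least a constant fraction of $\partial_G A$ goes to \emph{low-multiplicity} external vertices (having at most $K$ or $Ka\alpha\ln n/n$ edges to $A$, in the small and medium ranges of $a$ respectively). Each low-multiplicity boundary edge $e$ is traversed by the stationary walk with probability at least $1-\exp(-L_2/(2|E|))=\Omega(1/\alpha)$, yielding $\Omega(a\ln n)$ traversed low-multiplicity boundary edges in expectation. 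The same chunking device as in \ref{E:large} upgrades this to a high-probability bound strong enough to clear the union bound $\binom{n}{a}\le \exp(O(a\ln n))$, and dividing by the per-vertex cap yields $|N_\Gamma(A)|\ge \beta a\ln n$ for some $\beta=\beta(\varepsilon)>0$.

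The main obstacle is turning the pointwise expectation estimates on edge traversals into high-probability bounds that survive the union bounds above: the events ``edge $e$ is traversed'' are highly correlated through the trajectory of $Y$. The chunking device --- replacing $Y$ by $\Theta(n)$ i.i.d.\ stationary blocks of length $b$ at the cost of a $o(1)$ coupling error --- is what makes Chernoff-type tail bounds available in both parts. The delicate point is parameter management: the structural pseudo-random properties \ref{P:boundary}, \ref{P:expansion}, \ref{P:bad} provide exactly the polylogarithmic slack needed for the Chernoff tails to overcome the union bounds, and the constraint $\alpha\ge 1500\varepsilon^{-2}$ is calibrated so that the traversal probability $1-\exp(-L_2/(2|E|))$ stays at least $\Omega(\varepsilon/\alpha)$, which is what drives the final constant $\beta=\beta(\varepsilon)$.
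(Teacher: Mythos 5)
Your high-level plan—mix, couple, chunk into $\Theta(n)$ blocks, and apply Chernoff to block-level events—is in the right spirit but diverges substantially from the paper's proof, and the divergence creates genuine gaps.

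The paper first proves (\cref{visits}) that \whp{}\ \emph{every} vertex is properly exited $\nu(v)\ge\rho\ln n$ times. This is done by cutting the walk into segments of length $T=\ln^2 n$ separated by mixing-buffers of length $b$; the crucial ingredient is \cref{comeback}, a supermartingale-based bound saying that once a vertex is exited, the conditional probability of another exit within the same $T$-window is at most $\ln^{-1/2}n$. That second-moment control is exactly what upgrades the first-moment estimate $\E{M_Y(v)}\sim T/(2n)$ to the comparable lower bound $\pr{M_Y(v)\ge 1}\ge \frac{T}{2n}(1-6\alpha^{-1/2})$ in \cref{visits_stat}. With $\nu(v)\ge\rho\ln n$ in hand, \cref{trace:expansion:1,trace:expansion:2} never revisit the walk's temporal correlations: they represent the walk via independent choices $x_v^k\sim\unif(N_G(v))$, exploit that the exit directions from distinct vertices are \emph{mutually independent}, and reduce \ref{E:small} and \ref{E:large} to per-vertex product bounds.

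Your chunking argument skips this per-vertex structure and runs into exactly the correlation problem that \cref{comeback} was introduced to solve. For \ref{E:large} you assert that a block of length $b$ traverses an $A$--$B$ edge with probability $\Omega(bq)$, but $bq$ is only the \emph{expected} number of such traversals in the block; turning that into a lower bound on $\pr{\text{count}\ge 1}$ requires showing $\E{\text{count}\mid\text{count}\ge 1}=O(1)$, i.e.\ that $A$--$B$ crossings do not clump inside a block. This needs an argument, and it is not a trivial one: after one crossing the walk sits in $A\cup B$, and over the remaining $O(\ln n)$ steps of the block the expected number of further visits to $A\cup B$ is of order $(\ln\ln n)^{1.5}$, so the conditional expectation is not obviously bounded. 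Your margin over the union bound is only a $(\ln\ln n)^{1/2}$ factor in the exponent, so an uncontrolled $\mathrm{polyloglog}$ loss here is fatal. For \ref{E:small} the problem is worse: you want the number of traversed ``good'' boundary edges to concentrate at scale $\Omega(a\ln n)$, but the per-block count is bounded only by $O(\ln n)$ while its expectation is $O(a\ln n/n)=o(1)$, so Azuma/Hoeffding on the blocks yields tails of order $\exp(-\Theta(a^2/n))$, far too weak to beat $\binom{n}{a}$ for $a\ll\sqrt{n}$. You would again need to argue that the per-block count has exponentially light tails, which is once more a clumping-control statement. Moreover you never insert mixing buffers between blocks, and adjacent blocks share an endpoint and are \emph{not} approximately independent, so the joint coupling you invoke does not exist as stated; it must be done segment-by-segment conditionally, as in the paper's \cref{visit_segment}, and only works when the per-segment success probability dominates the $1/n$ coupling error.

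Two further issues: the per-edge traversal probability $1-\exp(-L_2/(2|E|))$ is a Poisson heuristic; the correct quantity depends on return times to the edge's endpoints and requires justification. And in \ref{E:small}, dividing the $\Omega(a\ln n)$ traversed edges by the per-external-vertex cap $Ka\alpha\ln n/n$ gives $|N_\Gamma(A)|=\Omega(n/(K\alpha))$, which is smaller than $\beta a\ln n$ once $a$ gets close to $n/\ln n$, so the final step does not close in the medium range of $a$. The paper avoids this by bounding $\pr{N_\Gamma(A)\subseteq B}$ directly for every candidate small $B$, rather than via a first-moment count. In short: you correctly identified that the difficulty is the correlation of traversal events along the trajectory, but the block-Chernoff device does not by itself overcome it; the paper's solution is the \cref{visits}/\cref{comeback} mechanism plus the independence of exit directions across vertices, and some version of that is unavoidable.
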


Before proving this theorem, we show that \cref{blackbox} is a simple
consequence of it.

\begin{proof}[Proof of \cref{blackbox}]
  Since \ref{E:small} and \ref{E:large} are monotone, we may assume
  $\varepsilon$ is sufficiently small.  Let $L=(1+\varepsilon)n\ln{n}$,
  $L_2=(2+\varepsilon)n\ln{n}$, $C=1500\varepsilon^{-2}$ and
  $p=\alpha\ln{n}/n$ for $\alpha\ge C$.
  Let $(X_t^{v_0})_{t=0}^{L_2}$ be the $\frac{1}{2}$-lazy random
  walk of length $L_2$ on $G$, starting at $v_0$, and define
  \begin{equation*}
    R = \left|\left\{0 < t \le L_2 \mid X_{t}\ne X_{t-1}\right\}\right|.
  \end{equation*}
  Since $\pr{X_t=X_{t-1}}=1/2$ for every $0<t\le L_2$, by Chernoff bounds
  (\cref{chernoff}) we have that $\pr{R>L}=o(1)$.  Denote by
  $\Gamma^\ell_{L_2}$ the trace of that walk, ignoring any loops, and let $P$
  be a monotone graph property which $\Gamma^\ell_{L_2}$ satisfies \whp{}.
  Given $R$, the trace $\Gamma^\ell_{L_2}$ has the same distribution as the
  trace of the non-lazy walk $\Gamma_R$.  Thus:
  \begin{equation*}
    \pr{\Gamma_L\in P}
    \ge \pr{\Gamma^\ell_{L_2}\in P, R\le L} = 1-o(1).
  \end{equation*}
  As \ref{E:small} and \ref{E:large} are both monotone, and since $G$
  is $\alpha$-pseudo-random \whp{} by \cref{p16}, the claim holds using
  \cref{lazy_version}.
\end{proof}

Thus, in what follows, $\varepsilon>0$ is sufficiently small, $G$ is a
$\alpha$-pseudo-random graph on the vertex set $[n]$ for $\alpha\ge
C=1500\varepsilon^{-2}$, $v_0\in[n]$ is some fixed vertex,
$L_2=(2+\varepsilon)n\ln{n}$, $(X_t^{v_0})_{t=0}^{L_2}$ is a $\frac{1}{2}$-lazy
simple random walk of length $L_2$ on $G$, starting at $v_0$ (which we may
simply refer to as $X$), and $(Y_t)_{t=0}^\infty$ (or simply $Y$) is the
$\frac{1}{2}$-lazy simple random walk on $G$, starting at a random vertex sampled
from the stationary distribution of $X$ (the \emph{stationary walk}).

\bigskip

The rest of this section is organised as follows. In the first subsection we
show that \whp{}, every vertex is visited at least a logarithmic number of
times. In the second and third subsections we use this fact to conclude that
small vertex sets typically expand well, and that large vertex sets
are typically connected, by that proving that the trace satisfies
\ref{E:small} and \ref{E:large}  \whp{}.

\subsection{Number of visits}
Define
\begin{equation*}
  \nu(v) = \left|\left\{0<t\le L_2\mid X_t=v,\ X_{t+1}\ne v\right\}\right|,
  \quad v\in[n].
\end{equation*}

\begin{theorem}\label{visits}
  There exists $\rho>0$ such that \whp{}, for every $v\in [n]$, $\nu(v)\ge
  \rho\ln{n}$.
\end{theorem}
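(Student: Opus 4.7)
The plan is to prove, for each fixed $v \in [n]$, the estimate $\pr{\nu(v) < \rho \ln n} \le n^{-1-\delta}$ for some $\rho(\varepsilon), \delta(\varepsilon) > 0$, and then conclude by a union bound over the $n$ vertices. A crucial preliminary observation is that property \ref{P:regular}, combined with the assumption $\alpha \ge 1500 \varepsilon^{-2}$, gives a \emph{sharp} estimate on the stationary distribution: $\pi_u = d(u)/(2|E|) = (1 + O(\varepsilon))/n$ uniformly in $u$. The key point is that this forces $L_2 \pi_v/2 = (1+\varepsilon/2+O(\varepsilon))\ln n$ to strictly exceed $\ln n$, which will eventually drive the exponent in the Chernoff bound above the critical threshold $1$ required for the union bound.

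I would then partition $[L_2]$ into $K := L_2/\ell$ consecutive blocks of length $\ell := \ln^2 n$, each consisting of an initial ``buffer'' of length $b' := \tau(n^{-2}) = O(\ln n)$ (by \cref{jerrum_cor}) followed by a ``sample phase'' of length $m := \ell - b'$, so that $m/\ell = 1 - o(1)$. A slight strengthening of \cref{buffer} (using TV target $n^{-2}$ rather than $n^{-1}$) permits simultaneously coupling the $K$ sample phases with $K$ i.i.d.\ copies of the stationary lazy walk of length $m$, at total coupling failure cost $K/n^2 = o(1/n)$. Let $Y_i$ be the indicator that a sojourn end at $v$ --- a time $t$ with $X_t = v$ and $X_{t+1} \ne v$ --- occurs in the $i$-th sample phase. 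On the coupling event the $Y_i$ become i.i.d.\ Bernoulli, and $\nu(v) \ge \sum_i Y_i$.

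To estimate $p := \pr{Y_i = 1}$ from below, let $N_i$ denote the number of sojourn ends at $v$ in a single stationary sample phase. Then $\E{N_i} = m \pi_v / 2$, and a direct Markov-property/reversibility calculation yields
\begin{equation*}
\E{N_i(N_i-1)} \le \frac{\pi_v m}{2 d(v)} \sum_{k=0}^{m-1} \sum_{u \in N(v)} p_{uv}^{(k)}.
\end{equation*}
For $k < b'$ the inner sum is bounded by $\pi_v / \min_u \pi_u = O(1)$ via stationarity and \ref{P:regular}, contributing $O(b')$ in total; for $k \ge b'$ the inner sum is $\le 2 d(v) \pi_v = O(\alpha \ln n / n)$ by mixing, contributing $o(1)$ when summed over $k \le m$ (since $m$ is polylog and $\pi_v = O(1/n)$). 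Hence $\E{N_i(N_i-1)}/\E{N_i} = O(b'/d(v)) = O(1/\alpha) = O(\varepsilon^2)$, and Bonferroni gives $p \ge \E{N_i}(1 - O(\varepsilon^2))$. Consequently $K p \ge (1 - O(\varepsilon))(1 - o(1)) L_2 \pi_v/2 \ge (1 + \varepsilon/3) \ln n$ for $\varepsilon$ sufficiently small.

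Finally, applying \cref{chernoff} to $\sum_i Y_i$, which is (stochastically) binomially distributed with mean $\mu \ge (1 + \varepsilon/3)\ln n$, and choosing $\rho$ small enough that the Chernoff rate satisfies $\varphi(\rho\ln n/\mu - 1) \ge 1 - \varepsilon/10$, one obtains $\pr{\sum_i Y_i \le \rho \ln n} \le \exp(-\mu(1 - \varepsilon/10)) \le n^{-1-\varepsilon/4}$. Combined with the $o(1/n)$ coupling failure, this gives $\pr{\nu(v) < \rho \ln n} = o(1/n)$, and a union bound over $v \in [n]$ completes the argument. The main obstacle is the sharp second-moment bound above: the ratio $\E{N_i(N_i-1)}/\E{N_i}$ must be $o(1)$ so that Bonferroni yields $p \approx \E{N_i}$ rather than the weaker Paley--Zygmund bound $p \gtrsim \E{N_i}/\mathrm{const}$, which would lose a constant factor and be incompatible with the union bound. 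It is precisely in this step that the hypothesis $\alpha \ge 1500\varepsilon^{-2}$ is used (through $b'/d(v) = O(1/\alpha)$).
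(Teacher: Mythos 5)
Your proof is correct, and it follows the same overall skeleton as the paper's (a block decomposition with mixing-time buffers, a per-block lower bound on the probability of a sojourn end at $v$, and a Chernoff bound over the $\sim L_2/\ln^2 n$ blocks giving an exponent strictly above $1$). The genuine difference lies entirely in how the per-block probability $p = \pr{N_i \ge 1}$ is shown to be $(1-o(1))\,\E{N_i}$. The paper goes via a \emph{return-probability} estimate: Lemma~\ref{superm} builds a distance-based supermartingale to show that the stationary walk, conditioned on having just left $v$, comes back within $T = \ln^2 n$ steps only with probability $\le \ln^{-1/2} n$ (Lemma~\ref{comeback}), with a separate direct argument for $\alpha \ge \ln^2 n$; Lemma~\ref{visits_stat} then converts this into $\pr{M_Y(v)\ge1}\ge \E{M_Y(v)}(1-\ln^{-1/2}n)^2$ via a geometric-series domination of $\pr{M_Y(v)=i}$. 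You instead compute the second falling moment directly: $\E{N_i(N_i-1)}$ is written using the Markov property and stationarity as $\frac{\pi_v m}{2d(v)}\sum_k \sum_{u\in N(v)} p^{(k)}_{uv}$, the pre-mixing tail $k < \tau'(n^{-2})$ is controlled via stationarity and \ref{P:regular} (giving $O(b')$), and the post-mixing tail contributes $O(m^2\pi_v^2) = O(\E{N_i}^2)$ which is negligible; Bonferroni then yields $p \ge \E{N_i}(1-O(1/\alpha))$. Your approach buys a unified treatment without the case distinction on $\alpha$, dispenses with the supermartingale machinery of Lemma~\ref{superm} and property \ref{P:spread} entirely, and gives a tighter $(1-O(\varepsilon^2))$ correction; the paper's approach gives more geometric intuition about excursions from $v$ but is technically heavier. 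Two small presentational caveats: (i) the bound on the post-mixing tail requires the \emph{uniform} mixing time $\tau'(\cdot)$ (pointwise control of $p^{(k)}_{uv}/\pi_v$), not just the TV mixing time $\tau(\cdot)$ --- this is fine since the paper's Claim~\ref{jerrum} bounds $\tau'$ directly, but one should cite that rather than \cref{jerrum_cor}; (ii) the phrase ``on the coupling event the $Y_i$ become i.i.d.'' is slightly imprecise --- the clean formulation, as in the paper's proof, is that $\pr{Y_i = 1 \mid \mathcal{F}_i} \ge p - n^{-2}$ for the natural filtration $\mathcal{F}_i$, whence $\sum_i Y_i$ stochastically dominates a $\bin(K, p-n^{-2})$ variable.
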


In order to prove this theorem, we first introduce a number of definitions and
lemmas.  Recall that a \emph{supermartingale} is a sequence $M(0),M(1),\ldots$
of random variables such that each conditional expectation $\E{M(t+1)\mid
M(0),\ldots,M(t)}$ is at most $M(t)$.  Given such a sequence, a \emph{stopping
rule} is a function from finite histories of the sequence into
$\left\{0,1\right\}$, and a \emph{stopping time} is the minimum time at which
the stopping rule is satisfied (that is, equals $1$). For two integers $s,t$,
let $s\wedge t=\min\left\{s,t\right\}$.  Let
$\lambda=\frac{\ln{n}}{15\ln{\ln{n}}}$, and for every $v\in[n]$ let $F_t^v$ be
the event ``$Y_t=v$ or $d_G(Y_t,v)>\lambda$'' (recall that for two vertices
$u,v$, $d_G(u,v)$ denotes the distance from $u$ to $v$ in $G$).
Note that if $\alpha<\ln^2{n}$ it follows from \ref{P:regular} and
\ref{P:spread} that the diameter of $G$ is larger than $\lambda$.

\begin{lemma}\label{superm}
  Suppose $\alpha<\ln^2{n}$. For $v\in[n]$, define the process
  \begin{equation*}
    \mathcal{M}^v(t)=\left(\frac{10}{\alpha\ln{n}}\right)^{d_G(Y_t,v)}.
  \end{equation*}
  Let $S=\min\left\{t\ge 0: F_t^v\right\}$ be a stopping time; then
  $\mathcal{M}^v(t\wedge S)$ is a supermartingale. In particular, for every
  $u\in[n]$ the stationary walk $Y_t$ satisfies
  \begin{equation*}
    \pr{Y_S=v\mid Y_0=u}
    \le \left(\frac{10}{\alpha\ln{n}}\right)^{d_G(u,v)}.
  \end{equation*}
\end{lemma}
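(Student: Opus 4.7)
The plan is to establish the supermartingale property by a one-step drift calculation at every unstopped state, and then deduce the hitting probability bound from the optional stopping theorem.

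First I would fix $t$ and condition on $Y_0,\ldots,Y_t$ on the event $\{t<S\}$, so that $w:=Y_t\ne v$ and $r:=d_G(w,v)\in[1,\lambda]$. Setting $q=10/(\alpha\ln n)$, I would split the neighbours of $w$ into three groups according to their distance from $v$: let $a$ be the number of neighbours at distance $r-1$, $b$ those at distance $r$, and $c$ those at distance $r+1$ (these are the only possibilities since $w$ is a neighbour). Writing out the one-step expectation for the $\tfrac12$-lazy walk, the supermartingale inequality $\E{\mathcal M^v(t+1)\mid Y_0,\ldots,Y_t}\le \mathcal M^v(t)$ reduces after a short algebraic manipulation to
\begin{equation*}
  a/q + cq \le a + c,\qquad\text{equivalently}\qquad a\le cq.
\end{equation*}

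The key inputs are the two pseudo-random properties: property \ref{P:spread} gives $a+b\le |E(w,B(v,r))|\le 5$, since every neighbour of $w$ at distance at most $r$ from $v$ lies in $B(v,r)$; and property \ref{P:regular} gives $c\ge d(w)-5\ge \tfrac{5\alpha}{6}\ln n - 5$. Together these yield $cq\ge \tfrac{50}{6}-\tfrac{50}{\alpha\ln n}>5\ge a$ once $\alpha\ln n$ is large enough (as it is for $\alpha\ge C$ and $n$ large), which is precisely what we need. Note that \ref{P:spread} is only available under the hypothesis $\alpha<\ln^2 n$, which is why the lemma carries that assumption. This verifies that $\mathcal M^v(t\wedge S)$ is a supermartingale.

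For the second statement, I would start the walk at $u$ and apply the optional stopping theorem to the bounded supermartingale $\mathcal M^v(t\wedge S)$ (it takes values in $[0,1]$ since $q\le 1$). Because $G$ is connected by \ref{P:connected} and finite, the stopping time $S$ is almost surely finite, so
\begin{equation*}
  \E{\mathcal M^v(S)\mid Y_0=u} \le \mathcal M^v(0) = q^{d_G(u,v)}.
\end{equation*}
On $\{Y_S=v\}$ we have $\mathcal M^v(S)=1$, and otherwise $\mathcal M^v(S)\ge 0$, so $\pr{Y_S=v\mid Y_0=u}\le q^{d_G(u,v)}$, which is the claimed inequality.

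The only genuinely delicate point is the drift computation: one must notice that the laziness does not help or hurt the inequality (it multiplies both sides by the same factor), and that the asymmetry driving the supermartingale property comes from the sparsity of short-distance neighbours guaranteed by \ref{P:spread} combined with the near-regularity of $G$ from \ref{P:regular}. Everything else is a clean application of optional stopping on a finite-state, bounded process.
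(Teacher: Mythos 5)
Your proposal is correct and takes essentially the same route as the paper's proof: a one-step drift calculation at each unstopped state, with property \ref{P:spread} bounding the number of edges back towards $v$ and property \ref{P:regular} bounding the degree from below, followed by optional stopping (which the paper carries out by iterating the supermartingale inequality and letting $t\to\infty$). The paper phrases the drift computation in terms of $p_\leftarrow(w)$, $p_\rightarrow(w)$ and a general algebraic inequality $\tfrac{y}{x}+x-y\le 1$, whereas you decompose neighbours into the three distance classes $a,b,c$ and reduce directly to $a\le cq$ — this is the same inequality with different bookkeeping, and both verifications are valid.
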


\begin{proof}
  For a vertex $w\in[n]$, denote
  \begin{align*}
    p_\leftarrow(w) &= \pr{d_G(Y_1,v) < d_G(Y_0,v) \mid Y_0=w},\\
    p_\rightarrow(w) &= \pr{d_G(Y_1,v) > d_G(Y_0,v) \mid Y_0=w}.
  \end{align*}
  We note that for $0<y\le x\le 1$, $\frac{y}{x}+x-y\le 1$. Thus, for
  $q_1,q_2>0$ for which
  $\frac{p_\leftarrow(w)}{p_\rightarrow(w)}\le\frac{q_1}{q_2}\le 1$,
  \begin{align*}
    &\E{\left(\frac{q_1}{q_2}\right)^{d_G(Y_1,v)}\mid Y_0=w}
    -\left(\frac{q_1}{q_2}\right)^{d_G(w,v)}\\
    &=
    \left(\frac{q_1}{q_2}\right)^{d_G(w,v)}
    \left(
    \left(p_\leftarrow(w)\frac{q_2}{q_1}+p_\rightarrow(w)\frac{q_1}{q_2}
    +\left(1-p_\leftarrow(w)-p_\rightarrow(w)\right)\right)
    -1\right)\\
    &=
    \left(\frac{q_1}{q_2}\right)^{d_G(w,v)}p_\rightarrow(w)
    \left(\frac{bp_\leftarrow(w)}{ap_\rightarrow(w)}
    + \frac{q_1}{q_2} - \frac{p_\leftarrow(w)}{p_\rightarrow(w)} - 1\right)
    \le 0.
  \end{align*}
  Let $w$ be such that $0<d_G(v,w)\le\lambda$. Since $\alpha<\ln^2{n}$,
  $G$ satisfies \ref{P:spread}. Considering that and \ref{P:regular}, and since
  $\alpha>25$, we have that
  \begin{align*}
    q_\leftarrow &:= \frac{5}{2(\alpha-2\sqrt{\alpha})\ln{n}}
    \ge \frac{5}{2d_G(w)} \ge p_\leftarrow(w),\\
    q_\rightarrow &:= \frac{\alpha\ln{n}}{4(\alpha-2\sqrt{\alpha})\ln{n}}
    \le \frac{1}{2} - \frac{5}{2(\alpha-2\sqrt{\alpha})\ln{n}}
    \le \frac{d_G(w)-5}{2d_G(w)} \le p_\rightarrow(w),
  \end{align*}
  and as
  $\frac{p_\leftarrow(w)}{p_\rightarrow(w)}
  \le \frac{q_\leftarrow}{q_\rightarrow}=\frac{10}{\alpha\ln{n}} \le 1$ we have
  that $\mathcal{M}^v(t\wedge S)$ is a supermartingale.  In addition, for every
  $t\ge 0$, almost surely,
  \begin{align*}
    \left(\frac{10}{\alpha\ln{n}}\right)^{d_G(Y_0,v)}
    &= \mathcal{M}^v(0)\\
    &\ge \E{\mathcal{M}^v(t\wedge S)\mid Y_0}\\
    &= \sum_{w\in[n]}\left(\frac{10}{\alpha\ln{n}}\right)^{d_G(w,v)}
    \cdot\pr{Y_{t\wedge S}=w\mid Y_0}\\
    &\ge \pr{Y_{t\wedge S}=v\mid Y_0}.
  \end{align*}
  As this is true for every $t\ge 0$, and since $S$ is almost surely finite, it
  follows that for every $u\in[n]$,
  \begin{equation*}
    \pr{Y_S=v\mid Y_0=u} \le
    \left(\frac{10}{\alpha\ln{n}}\right)^{d_G(u,v)}
  \end{equation*}
  \whp{}.
\end{proof}

Let
\begin{equation*}
  T=\ln^2{n}.
\end{equation*}
For a walk $W$ on $G$, let $I_W(v,t)$ be the event ``$W_t=v$ and $W_{t+1}\ne
v$''. Our next goal is to estimate the probability that $I_Y(v,t)$ occurs for
some $1\le t< T$, given that $I_Y(v,0)$ has occurred.

\begin{lemma}\label{comeback}
  For every vertex $v\in[n]$ we have that
  \begin{equation*}
    \pr{\bigcup_{1\le t< T} I_Y(v,t)\mid I_Y(v,0)} \le
    \ln^{-1/2}{n}.
  \end{equation*}
\end{lemma}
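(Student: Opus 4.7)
The plan is to reduce the inequality to a bound on $\pr{\exists\,t\in[1,T-1]\colon Y_t=v\mid Y_1=u}$ uniform in $u\in N_G(v)$. Under the conditioning $I_Y(v,0)$ we have $Y_0=v$, and since $\pr{Y_1=w\mid Y_0=v}$ equals $1/2$ for $w=v$ and $1/(2d(v))$ for $w\in N_G(v)$, after further conditioning on $Y_1\ne v$ the variable $Y_1$ is uniform on $N_G(v)$. By the Markov property the evolution of $(Y_t)_{t\ge 1}$ then depends on this conditioning only through $u=Y_1$, and since $I_Y(v,t)\subseteq\{Y_t=v\}$, a union bound on the hitting times of $v$ will suffice. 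The estimate then splits on the density regime of $G$.

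When $\alpha\ge\ln^{2}{n}$, \ref{P:regular} gives $\delta=\delta(G)\ge\tfrac{5}{6}\alpha\ln{n}\ge\tfrac{5}{6}\ln^{3}{n}$; for any $w\ne v$ one has $\pr{Y_{t+1}=v\mid Y_t=w}\le 1/(2\delta)$, while $\pr{Y_{t+1}=v\mid Y_t=v}=1/2$. Writing $q_t=\pr{Y_t=v\mid Y_1=u}$, this yields the recursion $q_{t+1}\le q_t/2+1/(2\delta)$, from which $q_t\le 1/\delta$ for every $t$. A union bound then bounds the probability of interest by $T/\delta=O(1/\ln{n})=o(\ln^{-1/2}{n})$, which is already strong enough.

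When $\alpha<\ln^{2}{n}$, I will invoke \cref{superm}. Define $\sigma=\min\{t\ge 1\mid Y_t=v\text{ or }d_G(Y_t,v)>\lambda\}$. A direct application of \cref{superm} started at $u$, which is at distance one from $v$, gives $\pr{Y_\sigma=v\mid Y_1=u}\le 10/(\alpha\ln{n})$. On the complementary event the walk first exits $B_G(v,\lambda)$, and each subsequent re-entry into $B_G(v,\lambda)$ must occur at a vertex at distance exactly $\lambda$ from $v$, since each step shifts the distance by at most one. Applying \cref{superm} afresh at each such entry (via the strong Markov property) bounds the probability of hitting $v$ during the resulting in-ball excursion by $(10/(\alpha\ln{n}))^{\lambda}\le\exp(-\lambda\lln{n})=n^{-1/15}$, using $\alpha\ge C$ and $\lambda=\ln{n}/(15\lln{n})$. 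Since consecutive re-entries are separated by at least two time steps (the walk must first exit the ball), there are at most $T/2$ of them in $[1,T-1]$, contributing via a union bound at most $(T/2)\cdot n^{-1/15}=o(1)$. Altogether the probability is at most $10/(\alpha\ln{n})+o(1)$, which lies below $\ln^{-1/2}{n}$ for all sufficiently large $n$.

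The principal obstacle is precisely this iteration in the second case: \cref{superm} is only informative while the walk remains inside $B_G(v,\lambda)$, so $[1,T-1]$ must be partitioned into the initial excursion starting at distance one (which contributes the dominant $10/(\alpha\ln{n})$ term) and the subsequent re-entry excursions from distance $\lambda$ (each contributing $n^{-1/15}$). The local sparsity furnished by \ref{P:spread} -- already exploited inside \cref{superm} -- drives each later excursion to be so costly that it comfortably absorbs the $T/2=\Theta(\ln^{2}{n})$ union-bound loss.
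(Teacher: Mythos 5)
Your proof is correct and follows essentially the same route as the paper's: a split at $\alpha=\ln^{2}n$, a direct degree-based bound in the dense regime, and, in the sparse regime, an invocation of \cref{superm} giving $10/(\alpha\ln n)$ for the first excursion starting at distance one and $(10/(\alpha\ln n))^{\lambda}$ for each excursion after a re-entry at distance $\lambda$, absorbed by a union bound over $O(T)$ excursions. The only differences are cosmetic: in the dense case you run the recursion $q_{t+1}\le q_t/2+1/(2\delta)$ on $q_t=\pr{Y_t=v}$ rather than counting successive entry events, and in the sparse case you speak informally of re-entry times instead of introducing the paper's explicit monotone sequence of stopping times $U_i$. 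The latter is worth tightening in a final writeup -- since the re-entry times are themselves random, the cleanest way to justify the union bound over excursions is to define the increasing stopping times $U_i=\min\{t>U_{i-1}:F_t^v\}$ and observe that $U_i\ge i+1$, so any visit to $v$ before time $T$ occurs at some $U_i$ with $i\le T-2$; each term $\pr{Y_{U_i}=v\mid Y_{U_{i-1}}\ne v}$ is then bounded via \cref{superm} and the strong Markov property exactly as you indicate. With that bookkeeping made explicit the argument is identical to the paper's.
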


\begin{proof}
  Fix $v\in[n]$.  Define the following sequence of stopping times: $U_0=0$,
  and for $i\ge 1$,
  \begin{equation*}
    U_i = \min\left\{t>U_{i-1}\mid F_t^v\right\}.
  \end{equation*}
  Then,
  \begin{equation*}
    \pr{\bigcup_{1\le t< T}I_Y(v,t)\mid I_Y(v,0)}
    \le \pr{Y_{U_1}=v\mid I_Y(v,0)}
    + \sum_{i=2}^{T-1} \pr{Y_{U_i}=v\mid Y_{U_{i-1}}\ne v,\ I_Y(v,0)}.
  \end{equation*}
  Now, if $\alpha<\ln^2{n}$, \cref{superm} and the Markov property imply
  that
  \begin{align*}
    \pr{Y_{U_1}=v\mid I_Y(v, 0)} &\le \frac{10}{\alpha \ln{n}},\\
    \pr{Y_{U_i}=v\mid Y_{U_{i-1}}\ne v,\ I_Y(v, 0)}
    &\le \left(\frac{10}{\alpha\ln{n}}\right)^\lambda
    \qquad (i\ge 2),
  \end{align*}
  so
  \begin{align*}
    \pr{\bigcup_{1\le i< T}I_Y(v,t)\mid I_Y(v,0)}
    &\le \frac{10}{\alpha\ln{n}} +
    T\cdot\left(\frac{10}{\alpha\ln{n}}\right)^\lambda\\
    &= \frac{10}{\alpha\ln{n}} + \ln^2{n}\cdot o\left(n^{-1/20}\right)
    \le \frac{20}{\alpha\ln{n}}
    \le \ln^{-1/2}{n}.
  \end{align*}

  Now consider the case $\alpha\ge\ln^2{n}$. The number of exits from $v$ at
  times $1,\ldots,T-1$ is at most the number of entries to $v$ at times
  $1,\ldots,T-2$ plus $1$. Recalling \ref{P:regular}, at any time
  $i\in[T-3]$, the probability to enter $v$ at time $i+1$ is at most
  $1/d_G(X_i) \le \ln^{-2.5}{n}$. Thus, the number of exits from $v$ is
  stochastically dominated by a binomial random variable with $T-1$ trials
  and success probability $\ln^{-2.5}{n}$. Thus,
  \begin{align*}
    \pr{\bigcup_{1\le i< T}I_Y(v,t)\mid I_Y(v,0)}
    &\le \pr{\sum_{t=1}^{T-1}\mathbf{1}_{I_Y(v,t)}\ge 1}\\
    &\le (T-1)\ln^{-2.5}{n} \le \ln^{-1/2}{n},
  \end{align*}
  and the claim follows.
\end{proof}

For a walk $W$ on $G$ and a vertex $v\in[n]$, let
$M_W(v)=\sum_{t=0}^{T-1}\mathbf{1}_{I_W(v,t)}$.

\begin{lemma}\label{visits_stat}
  For every vertex $v\in[n]$, $\pr{M_Y(v) \ge 1}\ge
  \frac{T}{2n}(1-6\alpha^{-1/2})$.
\end{lemma}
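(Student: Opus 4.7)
The plan is to use the identity $\pr{M_Y(v) \ge 1} = \E{M_Y(v)}/\E{M_Y(v) \mid M_Y(v) \ge 1}$ and bound the two factors separately. For the first moment, since $G$ is simple there is no loop at $v$, and the $\tfrac12$-lazy walk leaves $v$ at each step with probability exactly $1/2$. Hence $\pr{I_Y(v,t)} = \pi_v/2$ by stationarity, and $\E{M_Y(v)} = T\pi_v/2$. Property~\ref{P:regular} yields
\begin{equation*}
\pi_v \ge \frac{(\alpha - 2\sqrt{\alpha})\ln n}{n(\alpha + 2\sqrt{\alpha})\ln n} = \frac{1 - 2/\sqrt{\alpha}}{n(1 + 2/\sqrt{\alpha})} \ge \frac{1 - 4/\sqrt{\alpha}}{n},
\end{equation*}
so $\E{M_Y(v)} \ge (T/(2n))(1 - 4/\sqrt{\alpha})$.

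For the conditional expectation, the plan is to iterate the strong Markov property at the successive exit times from $v$. Let $\tau_1 < \tau_2 < \cdots$ be the times at which $I_Y(v,\cdot)$ occurs, so $\{M_Y(v) \ge k\} = \{\tau_k < T\}$. Conditional on $\tau_k < T$ and on the walk up to time $\tau_k + 1$, the rest of the walk is a fresh $\tfrac12$-lazy walk starting at the neighbour $Y_{\tau_k+1}$ of $v$, and the probability of a further exit from $v$ in the remaining window is bounded by $p := \max_{w \sim v}\pr{\bigcup_{1 \le t < T} I_Y(v,t) \mid Y_0 = v,\, Y_1 = w}$. Hence $\pr{\tau_{k+1} < T} \le p \cdot \pr{\tau_k < T}$, which gives $\pr{M_Y(v) \ge k \mid M_Y(v) \ge 1} \le p^{k-1}$, and summing yields $\E{M_Y(v) \mid M_Y(v) \ge 1} \le 1/(1-p) \le 1 + 2p$ provided $p \le 1/2$.

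The main technical obstacle is that the statement of \cref{comeback} only gives $p \le \ln^{-1/2} n$, which is too weak when $\alpha \gg \ln n$; the sharper estimate $p \le 1/\sqrt{\alpha}$ is required. This refinement is however implicit in the proof of \cref{comeback}, whose bounds hold uniformly over the starting neighbour $w$: when $\alpha < \ln^2 n$ the supermartingale argument gives $p \le 20/(\alpha \ln n) \le 1/\sqrt{\alpha}$ for $n$ large; when $\alpha \ge \ln^2 n$, the number of entries to $v$ in a window of length $T$ is bounded by $T$ times the maximum one-step transition probability $1/(2\cdot (5\alpha/6)\ln n) = 3/(5\alpha \ln n)$ (using~\ref{P:regular}), hence $p \le 3\ln n /(5\alpha) \le 1/\sqrt{\alpha}$ whenever $\sqrt{\alpha} \ge 3\ln n/5$, which is implied by $\alpha \ge \ln^2 n$.

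Combining the two bounds,
\begin{equation*}
\pr{M_Y(v) \ge 1} \ge \frac{T}{2n}\cdot\frac{1 - 4/\sqrt{\alpha}}{1 + 2/\sqrt{\alpha}} \ge \frac{T}{2n}\left(1 - \frac{6}{\sqrt{\alpha}}\right),
\end{equation*}
where the last inequality is equivalent to the trivial $12/\alpha \ge 0$ after cross-multiplying.
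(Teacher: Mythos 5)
Your proof is correct, and it actually handles a point that the paper's own argument glosses over. Both proofs share the same skeleton: lower-bound $\E{M_Y(v)} = T\pi_v/2$ via \ref{P:regular}, then control the tail of $M_Y(v)$ geometrically by iterating a ``no-quick-return'' estimate at successive exit times. The paper does this by writing $\E{M_Y(v)}\le\pr{M_Y(v)=1}\sum_i i\,p^{i-1}$ with $p=\ln^{-1/2}n$ taken from the \emph{statement} of \cref{comeback}, and then asserts that $(1-5\alpha^{-1/2})(1-\ln^{-1/2}n)^2\ge 1-6\alpha^{-1/2}$. Your identity $\pr{M\ge 1}=\E{M}/\E{M\mid M\ge 1}$ reaches the same place more directly, but the real value of your writeup is the observation that the crude $\ln^{-1/2}n$ error cannot be absorbed into a $6\alpha^{-1/2}$ error once $\alpha$ is much larger than $\ln n$ (e.g.\ $\alpha=\ln^4 n$), so the inequality as written in the paper's last step needs an $\alpha$-dependent return estimate. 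You correctly extract the sharper, $\alpha$-uniform bound $p\le\alpha^{-1/2}$ from the two cases already inside \cref{comeback}'s proof --- the supermartingale bound $20/(\alpha\ln n)$ for $\alpha<\ln^2n$ and the one-step transition bound for $\alpha\ge\ln^2n$ --- and this is exactly what makes the stated form $\frac{T}{2n}(1-6\alpha^{-1/2})$ hold throughout the range of $\alpha$. (In the paper the discrepancy is harmless downstream, since $(1-\ln^{-1/2}n)^2=1-o(1)$ gets absorbed in \cref{visits} anyway, but your version makes the lemma's statement match its proof.) One small thing worth being explicit about: your $p$ conditions on the specific neighbour $Y_1=w$ after an exit; both the supermartingale in \cref{superm} (which only sees $d_G(Y_0,v)$) and the transition-probability argument give bounds uniform in $w$, so the strong-Markov iteration is legitimate, but it is worth a sentence.
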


\begin{proof}
  Fix $v\in[n]$.  It follows from \ref{P:regular} that
  \begin{align*}
    \E{M_Y(v)}
    = \sum_{t=0}^{T-1}\pr{I_Y(v,t)}
    &= T\cdot\pr{Y_0=v\land Y_1\ne v}
    = T\cdot\pr{Y_0=v}\pr{Y_1\ne Y_0}\\
    &= \frac{T\cdot \pi_v}{2}
    = \frac{T\cdot d(v)}{4|E(G)|}
    \ge \frac{T\cdot \left(1-2\alpha^{-1/2}\right)}
    {2n\cdot \left(1+2\alpha^{-1/2}\right)}
    \ge \frac{T}{2n}\cdot \left(1-5\alpha^{-1/2}\right).
  \end{align*}
  Thus by \cref{comeback},
  \begin{align*}
    \frac{T}{2n}\left(1-5\alpha^{-1/2}\right)
    &\le \E{M_Y(v)}\\
    &= \sum_{i=1}^\infty i\pr{M_Y(v)=i}\\
    &\le \pr{M_Y(v)=1}\sum_{i=1}^\infty
    i\left(\ln^{-1/2}{n}\right)^{i-1}\\
    &= \pr{M_Y(v)=1}\left(1-\left(\ln^{-1/2}{n}\right)\right)^{-2}.
  \end{align*}
  It follows that
  \begin{equation*}
    \pr{M_Y(v)\ge 1}
    \ge \pr{M_Y(v) = 1}
    \ge \frac{T}{2n}\left(1-6\alpha^{-1/2}\right).\qedhere
  \end{equation*}
\end{proof}

Let
\begin{equation*}
  b=3601\ln{n}.
\end{equation*}

\begin{corollary}\label{visit_segment}
  Let $t\ge 0$. Conditioned on $X_0,\ldots,X_t$, for every vertex $v\in [n]$,
  \begin{equation*}
    \pr{M_{\left(X_{t+b+s}\right)_{s\ge 0}}(v) \ge 1} \ge
      \frac{T}{2n}(1-6\alpha^{-1/2}) - \frac{1}{n}.
  \end{equation*}
\end{corollary}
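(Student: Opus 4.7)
The plan is to combine the stationary-walk estimate in \cref{visits_stat} with the mixing-based coupling furnished by \cref{buffer}. Concretely, I would argue as follows. Condition on $X_0,\ldots,X_t$ and apply \cref{buffer} to obtain a coupling of the shifted walk $W=(X_{t+b+s})_{s\ge 0}$ with the stationary walk $Y=(Y_s)_{s\ge 0}$ such that the event $\mathcal{C}=\{\exists s\ge 0: W_s\neq Y_s\}$ has probability at most $1/n$.

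Next, observe that the random variable $M_W(v)=\sum_{s=0}^{T-1}\mathbf{1}_{I_W(v,s)}$ is a deterministic function of $W_0,W_1,\ldots,W_T$, and similarly $M_Y(v)$ depends only on $Y_0,\ldots,Y_T$. On the complement of $\mathcal{C}$ the two walks coincide pointwise, so $M_W(v)=M_Y(v)$ there. Consequently,
\begin{equation*}
  \pr{M_W(v)\ge 1}
  \ge \pr{M_Y(v)\ge 1,\ \mathcal{C}^c}
  \ge \pr{M_Y(v)\ge 1}-\pr{\mathcal{C}}.
\end{equation*}

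Finally, plug in \cref{visits_stat}, which gives $\pr{M_Y(v)\ge 1}\ge \frac{T}{2n}(1-6\alpha^{-1/2})$, and use $\pr{\mathcal{C}}\le 1/n$ from the coupling. This yields the stated lower bound. There is no real obstacle here: the only subtlety is making sure the coupling from \cref{buffer} is invoked conditionally on $X_0,\ldots,X_t$ (which is exactly the form stated there) and that $M_W(v)$ genuinely depends on only finitely many coordinates, so that pointwise agreement of the coupled walks transfers the event.
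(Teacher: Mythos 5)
Your argument is exactly the paper's proof, just spelled out in more detail: the paper combines \cref{buffer} and \cref{visits_stat} in precisely this coupling-and-subtract-$1/n$ way, and your elaboration of why pointwise agreement of the coupled walks transfers the event $\{M_\cdot(v)\ge 1\}$ is the correct (implicit) justification.
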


\begin{proof}
  According to \cref{visits_stat,buffer},
  \begin{equation*}
    \pr{M_{\left(X_{t+b+s}\right)_{s\ge 0}}(v) \ge 1}
    \ge \pr{M_{\left(Y_s\right)_{s\ge 0}}(v)\ge 1} - \frac{1}{n}
    \ge \frac{T}{2n}\left(1-6\alpha^{-1/2}\right)-\frac{1}{n}.\qedhere
  \end{equation*}
\end{proof}

\begin{proof}[Proof of \cref{visits}]
  Consider dividing the $L_2$ steps of the walk $X$ into segments of length
  $T+1$ with buffers of length $b$ between them (and before the first).
  Formally, the $k$'th segment is the walk
  \begin{equation*}
    \left(X^{(k)}_s\right)_{s=0}^T = \left(X_{(k-1)(T+1)+kb+s}\right)_{s=0}^T.
  \end{equation*}
  It follows from \cref{visit_segment} that independently between
  the segments, for a given $v$,
  \begin{equation*}
    \pr{M_{X^{(k)}}(v)\ge 1} \ge \frac{T}{2n}\left(1-6\alpha^{-1/2}\right)
    -\frac{1}{n}.
  \end{equation*}
  Thus, $\nu(v)$ stochastically dominates a binomial random variable with
  $\floor{L_2/(T+1+b)}$ trials and success probability
  $\frac{T}{2n}\left(1-6\alpha^{-1/2}\right)-\frac{1}{n}$.  Let
  \begin{equation*}
  \mu=\floor{L_2/(T+1+b)}\cdot
  \left(\frac{T}{2n}\left(1-6\alpha^{-1/2}\right)-\frac{1}{n}\right),
  \end{equation*}
  and note that
  \begin{equation*}
    \mu \sim \frac{2+\varepsilon}{2}\ln{n}\cdot \left(1-6\alpha^{-1/2}\right).
  \end{equation*}
  Thus, letting $\eta=(1-6\alpha^{-1/2})(1+\varepsilon/2)$, by
  \cref{chernoff:cor},
  \begin{align*}
    \pr{\nu(v)<\rho\ln{n}}
    = \pr{\nu(v)<\frac{\rho}{\eta}\cdot \mu(1+o(1))}
    &\le \exp\left(-\mu(1+o(1))(1-e^{-c}-\rho c/\eta)\right)\\
    &\le \exp\left(-\ln{n}(1+o(1)) (\eta(1-e^{-c}) - \rho c) \right),
  \end{align*}
  for every $c>0$.  Since $\alpha\ge 1500\varepsilon^{-2}$ we have that
  $\eta\ge 1+\varepsilon/3$, and taking $c>\ln(13/\varepsilon)$
  gives $\eta(1-e^{-c})>1+\varepsilon/4$, and finally taking
  $\rho<\varepsilon/20c$ gives
  \begin{equation*}
    \pr{\nu(v)<\rho\ln{n}} = o\left(n^{-1+\varepsilon/5}\right).
  \end{equation*}
  The union bound yields
  \begin{equation*}
    \pr{\exists v\in[n]:\ \nu(v)<\rho\ln{n}}
    \le n\cdot\pr{\nu(v)<\rho\ln{n}} = o(1),
  \end{equation*}
  and that concludes the proof.
\end{proof}

\subsection{Expansion of small sets in the trace graph}
Let $\Gamma=\Gamma_{L_2}^{v_0}(G)$ be the trace of $X$.

\begin{theorem}\label{trace:expansion:1}
  There exists $\beta>0$ such that \whp{} for every set $A\subseteq [n]$ with
  $|A|=a\le n/\ln{n}$, it holds that $|N_{\Gamma}(A)|\ge \beta \cdot
  a\ln{n}$.
\end{theorem}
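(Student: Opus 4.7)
My plan is to realize the lazy random walk via the \emph{stack coupling}: for each vertex $v\in[n]$, independently draw an infinite i.i.d.\ sequence $Z^v_1,Z^v_2,\ldots$ of uniform random neighbours in $G$, and simulate the walk by popping the next unused element of the current vertex's sequence at each active step. Setting $k:=\rho\ln n$ with the constant $\rho>0$ from \cref{visits}, let $W_v:=\{Z^v_1,\ldots,Z^v_k\}$. By \cref{visits}, \whp{} every vertex has $\nu(v)\ge k$; on that event the first $k$ samples of each stack are popped by the walk, so $W_v\subseteq N_\Gamma(v)$ for every $v$. The key feature is that $\{W_v\}_{v\in[n]}$ are mutually independent.

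Fix $A$ with $|A|=a\le n/\ln n$, and for $u\in[n]\smallsetminus A$ set $Y_u:=\mathbf{1}[W_u\cap A\ne\emptyset]$. The family $\{Y_u\}$ is independent (different stacks), and on the high-probability event above $|N_\Gamma(A)|\ge\sum_{u\notin A}Y_u$. From \ref{P:regular}, $d_G(u)\le(4/3)\alpha\ln n$, giving
\[
\pr{Y_u=1}\ge 1-\exp\bigl(-3\rho\,d_A(u)/(4\alpha)\bigr)\ge \min\bigl(\tfrac{1}{2},\tfrac{3\rho\,d_A(u)}{8\alpha}\bigr).
\]
To lower-bound $\mu:=\E{\sum_{u\notin A}Y_u}$, \ref{P:boundary} gives $|E_G(A,A^c)|\ge a\alpha\ln n/4$, while \ref{P:expansion} (for a sufficiently large constant $K$) shows that almost all of these boundary edges go to \emph{low-codegree} vertices: those with $d_A(u)\le K$ (when $a<n/(\alpha\ln n)$) or $d_A(u)\le Ka\alpha\ln n/n$ (otherwise). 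Summing the low-codegree contributions gives $\mu\ge c\cdot a\ln n$ for an explicit constant $c=c(\rho,K)>0$.

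The endgame is a Chernoff bound on the independent sum, $\pr{\sum_{u\notin A}Y_u<\mu/2}\le e^{-\mu/8}$, combined with a union bound over $A$. The principal obstacle is that the resulting exponent $c/8$ is only of order $\rho$, while \cref{visits} forces $\rho$ to be small, so a naive Chernoff does not immediately beat the $\binom{n}{a}\le n^a$ union-bound factor for very small $a$. To close this gap we reinforce the estimate using the outgoing direction: the independent sets $\{W_v\}_{v\in A}$ themselves cover many distinct neighbours of $A$ in $\Gamma$, with $\sum_v|W_v|\ge a\rho\ln n/2$ concentrated whp and the expected overlap $\sum_v|W_v|-|\bigcup_v W_v|$ controlled by the codegree consequence $\sum_u d_A(u)^2=O(a\alpha\ln n)$ of \ref{P:expansion}, which allows a second-moment argument to retain a constant fraction of the samples as distinct. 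Combining the incoming $Y_u$ estimate (effective once $a$ is moderately large) with this outgoing second-moment estimate (effective for very small $a$) and carrying out the union bound separately in the two regimes yields $|N_\Gamma(A)|\ge\beta a\ln n$ uniformly in $1\le a\le n/\ln n$ and completes the proof.
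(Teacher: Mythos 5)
Your stack-coupling idea is elegant and the overall framing---lower-bounding $|N_\Gamma(A)|$ by counting the independent indicator events $Y_u=\mathbf{1}[W_u\cap A\ne\emptyset]$ and then Chernoff plus union bound---is natural, but there is a genuine gap exactly where you flag the obstacle, and the proposed repair does not close it. The mean $\mu=\E{\sum_u Y_u}$ is only $\Theta(\rho\,a\ln n)$ with $\rho$ the (small) constant from \cref{visits}, so a Chernoff bound gives failure probability at best $e^{-\Theta(\rho\,a\ln n)}$; since $\rho<1$ this is beaten by the $\binom{n}{a}\approx e^{a\ln(n/a)}$ union-bound factor for all but the very largest $a$, and no choice of the target constant $\beta$ in ``$\sum_u Y_u\ge\beta a\ln n$'' can improve the exponent beyond $\mu$ itself. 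The ``outgoing second-moment'' patch has two problems. First, the claimed bound $\sum_u d_A(u)^2=O(a\alpha\ln n)$ does \emph{not} follow from \ref{P:expansion}: that property controls $\sum_{\text{high }u}d_A(u)$ (the total number of edges to high-codegree vertices), but each such $u$ may have $d_A(u)$ as large as $d_G(u)=\Theta(\alpha\ln n)$, so $\sum_u d_A(u)^2$ can be of order $a\alpha^2\ln^2 n$, a factor $\alpha\ln n$ too large. Second, and more fundamentally, even a correct variance bound would only give failure probability of order $\mathrm{Var}/\mu^2=O(1)$ via the second-moment method, nowhere near the $n^{-\Omega(a)}$ needed to absorb the union bound over $\binom{n}{a}$ sets $A$; a second-moment argument is qualitatively the wrong tool here.

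What is missing is the paper's extra union bound over candidate \emph{neighbourhoods} $B$ of size $a\beta\ln n$. Instead of bounding $\pr{\sum_u Y_u<\beta a\ln n}$ via its mean, one bounds $\pr{N_\Gamma(A)\subseteq B}$ for each fixed small $B$ separately. After discarding the few ``bad'' vertices of $A$ (those with too many edges into the high-codegree set $B_0$) and the few vertices with atypically many early exits into $B_0$, each of the $\ge a/4$ remaining vertices $v\in A$ makes $\ge\rho\ln n/2$ effectively uniform exits into $N^+_G(A)\smallsetminus B_0$, each landing in $A\cup B$ with probability $p_v=O(\beta)$ (in the small-$a$ case $p_v\le 6K\beta/\alpha$, in the other case $p_v\le 6Ka\beta\ln n/n$). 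Multiplying over the good vertices gives $\pr{N_\Gamma(A)\subseteq B}\le O(\beta)^{a\rho\ln n/8}$, an exponent of order $a\ln n\cdot(\rho/8)\ln(1/\beta)$. Crucially, the union bound over $B$ only costs $\binom{\cdot}{a\beta\ln n}\approx(1/\beta)^{a\beta\ln n}$, i.e.\ an exponent of order $a\ln n\cdot\beta\ln(1/\beta)$. The comparison is now $\rho/8$ versus $\beta$, which is won by taking $\beta$ small enough---independently of how small $\rho$ is. This is the step your outline never reaches: it is not a concentration bound on a sum, but a collapse of the event ``$N_\Gamma(A)$ is small'' to the exponentially many but individually exponentially-unlikely events ``$N_\Gamma(A)\subseteq B$''.
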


\begin{proof}
  We prove the theorem assuming that for every $v\in[n]$, $\nu(v)\ge\rho\ln{n}$,
  and recall that this event occurs, according to \cref{visits}, \whp{}.
  Let $K>0$ be large enough so that \ref{P:expansion} holds.  Suppose first
  that $A$ is such that $a \ge \frac{n}{\alpha\ln{n}}$. Let
  \begin{equation*}
    B_0 = \left\{u\in N_G(A)\midd|E_G(u,A)|\ge Ka\alpha\ln{n}/n\right\},
  \end{equation*}
  and let
  \begin{equation*}
    A_0 = \left\{v\in A\midd|E_G(v,B_0)| \ge 2\alpha\ln{n}/K\right\}.
  \end{equation*}
  According to \ref{P:expansion}, $|A_0|\le\frac{a}{2}$.  Let
  $A_1=A\smallsetminus A_0$.  For a vertex $v\in A_1$, let $\gamma(v)$ count
  the number of moves (if any) the walk has made from $v$ to $B_0$ in the first
  $\rho\ln{n}$ exits from $v$. By \nameref{P:regular},
  \begin{equation*}
    \pr{\gamma(v)\ge\frac{\rho\ln{n}}{2}}
    \le \binom{\rho\ln{n}}{\rho\ln{n}/2}
    \left(\frac{|E_G(v,B_0)|}{d_G(v)}\right)^{\rho\ln{n}/2}\le
    \left(\frac{15}{K}\right)^{\rho\ln{n}/2}.
  \end{equation*}
  Let $A_2\subseteq A_1$ be the set of vertices $v\in A_1$ with $\gamma(v) \ge
  \rho\ln{n}/2$. We have that
  \begin{equation*}
    \pr{|A_2| \ge \frac{a}{4}}
    \le \binom{a}{a/4}
    \pr{\gamma(v)\ge\frac{\rho\ln{n}}{2}}^{a/4}
    \le \left(\frac{16}{K}\right)^{a\rho\ln{n}/8}.
  \end{equation*}
  Note that for large enough $K$,
  $\binom{n}{a}\left(\frac{16}{K}\right)^{a\rho\ln{n}/8} =
  o\left(n^{-1}\right)$.  Set $B_1=N_G(A)\smallsetminus B_0$,
  $B_2=N_G^+(A)\smallsetminus B_0$ and
  $A_3=A_1\smallsetminus A_2$.  Fix $B_3\subseteq B_1$.
  For $v\in A_3$, let $p_v$ be the probability that a walk which exits $v$
  and lands in $B_2$ will also land in $A\cup B_3$. By \nameref{P:regular} we have
  that for every $v\in A_3$,
  \begin{equation*}
    p_v \le \frac{|E_G(v,A\cup B_3)|}{|E_G(v,B_2)|}
    \le \frac{|E_G(v,B_3)|+a}{\frac{5}{6}\alpha\ln{n} - |E_G(v,B_0)|}
    \le \frac{|E_G(v,B_3)|+a}{\frac{4}{5}\alpha\ln{n}},
  \end{equation*}
  for large enough $K$.
  Assuming that $|A_3|\ge\frac{a}{4}$, we have that
  $|E_G(A_3,B_3)|\le |B_3|\cdot Ka\alpha\ln{n}/n$, hence
  \begin{equation*}
    \frac{1}{|A_3|}\sum_{v\in A_3}p_v
    \le \frac{4}{a}\cdot \frac{|E_G(A_3,B_3)|+a}{\frac{4}{5}\alpha\ln{n}}
    \le \frac{5K|B_3|}{n} + \frac{5}{\alpha\ln{n}},
  \end{equation*}
  hence by the inequality between the arithmetic and geometric means,
  \begin{equation*}
    \prod_{v\in A_3} p_v
    \le \left(\frac{5K|B_3|}{n} + \frac{5}{\alpha\ln{n}} \right)^{|A_3|}.
  \end{equation*}
  In particular, if $N_\Gamma(A)\subseteq B$ for $B$ with $|B|\le a\beta\ln{n}$
  then there exists $B_3\subseteq B_1$ with $|B_3|\le a\beta\ln{n}$ such that
  $N_\Gamma(A_3)\cap B_2 \subseteq A\cup B_3$, and conditioning on
  $|A_3|\ge a/4$ and making sure $\beta=\beta(K)$ is small enough, the
  probability of that event is at most
  \begin{equation*}
    \prod_{v\in A_3}(p_v)^{\rho\ln{n}/2}
    \le \left(\frac{5Ka\beta\ln{n}}{n}
              + \frac{5}{\alpha\ln{n}} \right)^{a\rho\ln{n}/8}
    \le \left(\frac{6Ka\beta\ln{n}}{n}\right)^{a\rho\ln{n}/8}.
  \end{equation*}
  Thus, taking the union bound,
  \begin{align*}
    &\pr{\exists A,\ |A|=a:\ \left|N_\Gamma(A)\right|\le a\beta\ln{n}}\\
    &\le \sum_{\substack{A\\|A|=a}}
      \pr{\left|N_\Gamma(A)\right|\le a\beta\ln{n}}\\
    &\le \sum_{\substack{A\\|A|=a}}\left[
      \pr{|A_2|\ge \frac{a}{4}}
      + \pr{\exists B,|B|=a\beta\ln{n},N_\Gamma(A)\subseteq B \mid
          \left|A_3\right|\ge \frac{a}{4}}
    \right]\\
    &\le \binom{n}{a}\left(\frac{16}{K}\right)^{a\rho\ln{n}/8}
      + \sum_{\substack{A\\|A|=a}} \sum_{\substack{B\\|B|=a\beta\ln{n}}}
      \pr{N_\Gamma(A)\subseteq B\mid |A_3|\ge \frac{a}{4}}\\
    &\le o\left(n^{-1}\right)
      + \binom{n}{a}\binom{n}{a\beta\ln{n}}
        \left(\frac{6Ka\beta\ln{n}}{n}\right)^{a\rho\ln{n}/8}\\
    &\le o\left(n^{-1}\right)
      + \left[\left(\frac{en}{a\beta\ln{n}}\right)^{2\beta}
        \left(\frac{6Ka\beta\ln{n}}{n}\right)^{\rho/8}
      \right]^{a\ln{n}},
  \end{align*}
  and we may take $\beta>0$ to be small enough so that the above expression
  will tend to $0$ faster than $1/n$.

  Now consider the case where $a<\frac{n}{\alpha\ln{n}}$. Let
  \begin{equation*}
    B_0 = \left\{u\in N_G(A)\midd|E_G(u,A)|\ge K\right\},
  \end{equation*}
  and
  \begin{equation*}
    A_0 = \left\{v\in A\midd
      \left|E_G(v,B_0)\right|\ge 2\alpha\ln{n}/\ln{K}\right\}.
  \end{equation*}
  According to \ref{P:expansion}, $|A_0|\le\frac{a}{2}$. Let
  $A_1=A\smallsetminus A_0$.  Define $\gamma(v)$ for vertices from $A_1$ as
  in the first case.  It follows that
  \begin{equation*}
    \pr{\gamma(v) \ge \frac{\rho\ln{n}}{2}}
    \le \binom{\rho\ln{n}}{\rho\ln{n}/2}
    \left(\frac{\left|E_G(v,B_0)\right|}{d_G(v)}\right)^{\rho\ln{n}/2}
    \le \left(\frac{15}{\ln{K}}\right)^{\rho\ln{n}/2}.
  \end{equation*}
  Let $A_2\subseteq A_1$ be the set of vertices $v$ with $\gamma(v) \ge
  \rho\ln{n}/2$. We have that
  \begin{equation*}
    \pr{|A_2| \ge \frac{a}{4}}
    \le \binom{a}{a/4}\pr{\gamma(v)\ge
      \frac{\rho\ln{n}}{2}}^{a/4}
    \le \left(\frac{16}{\ln{K}}\right)^{a\rho\ln{n}/8}.
  \end{equation*}
  Note that for large enough $K$,
  $\binom{n}{a}\left(\frac{16}{\ln{K}}\right)^{a\rho\ln{n}/8} =
  o\left(n^{-1}\right)$.  Set $B_1=N_G(A)\smallsetminus B_0$,
  $B_2=N_G^+(A)\smallsetminus B_0$ and
  $A_3=A_1\smallsetminus A_2$.  Fix $B_3\subseteq B_1$.  For $v\in A_3$, let
  $p_{v}$ be the probability that a walk which exits $v$ and lands in $B_2$
  will also land in $A\cup B_3$. We have that for every $v\in A_3$,
  \begin{equation*}
    p_v \le \frac{|E_G(v,A\cup B_3)|}{|E_G(v,B_2)|}
    \le \frac{|E_G(v, B_3)|+a}{\frac{5}{6}\alpha\ln{n}-|E_G(v,B_0)|}
    \le \frac{|E_G(v, B_2)|+a}{\frac{4}{5}\alpha\ln{n}},
  \end{equation*}
  For large enough $K$.  Assuming that $|A_3|\ge\frac{a}{4}$, we have that
  $|E_G(A_3,B_3)|\le K|B_3|$, and hence
  \begin{equation*}
    \frac{1}{|A_3|}\sum_{v\in A_3}p_v
    \le \frac{4}{a}\cdot
    \frac{|E_G(A_3,B_3)|+a}{\frac{5}{4}\alpha\ln{n}}
    \le \frac{5K|B_3|}{a\alpha\ln{n}}
    + \frac{5}{\alpha\ln{n}},
  \end{equation*}
  and by the inequality between the arithmetic and geometric means,
  \begin{equation*}
    \prod_{v\in A_3}p_v \le
    \left(\frac{5K|B_3|}{a\alpha\ln{n}} + \frac{5}{\alpha\ln{n}}\right)^{|A_3|}.
  \end{equation*}
  In particular, if $N_\Gamma(A)\subseteq B$ for $B$ with $|B|\le a\beta\ln{n}$
  then there exists $B_3\subseteq B_1$ with $|B_3|\le a\beta\ln{n}$ such that
  $N_\Gamma(A_3)\cap B_2 \subseteq A\cup B_3$, and conditioning on
  $|A_3|\ge a/4$ and making sure $\beta=\beta(K)$ is small enough, the
  probability of that event is at most
  \begin{equation*}
    \prod_{v\in A_3}(p_v)^{\rho\ln{n}/2} \le
    \left(
      \frac{5K\beta}{\alpha} + \frac{5}{\alpha\ln{n}}
    \right)^{a\rho\ln{n}/8}
    \le \left(\frac{6K\beta}{\alpha}\right)^{a\rho\ln{n}/8}.
  \end{equation*}
  Recalling \ref{P:regular}, we notice that
  $\left|N_G(A)\right|\le \frac{4}{3}a\alpha\ln{n}$.  Thus, taking the union
  bound,
  \begin{align*}
    &\pr{\exists A,\ |A|=a:\ \left|N_\Gamma(A)\right|\le a\beta\ln{n}}\\
    &\le \sum_{\substack{A\\|A|=a}}
      \pr{\left|N_\Gamma(A)\right|\le a\beta\ln{n}}\\
    &\le \sum_{\substack{A\\|A|=a}}\left[
      \pr{|A_2|\ge \frac{a}{4}}
      + \pr{\exists B,|B|=a\beta\ln{n},N_\Gamma(A)\subseteq B \mid
          \left|A_3\right|\ge \frac{a}{4}}
    \right]\\
    &\le \binom{n}{a}\left(\frac{16}{\ln{K}}\right)^{a\rho\ln{n}/8}
      + \sum_{\substack{A\\|A|=a}}
        \sum_{\substack{B\subseteq N_G(A)\\|B|=a\beta\ln{n}}}
      \pr{N_\Gamma(A)\subseteq B\mid |A_3|\ge \frac{a}{4}}\\
    &\le o\left(n^{-1}\right)
      + \binom{n}{a}\binom{\frac{4}{3}a\alpha\ln{n}}{a\beta\ln{n}}
        \left(\frac{6K\beta}{\alpha}\right)^{a\rho\ln{n}/8}\\
    &\le o\left(n^{-1}\right)
      + \left[
        \left(\frac{en}{a}\right)^{1/\ln{n}}
        \left(\frac{4\alpha}{\beta}\right)^\beta
        \left(\frac{6K\beta}{\alpha}\right)^{\rho/8}
      \right]^{a\ln{n}},
  \end{align*}
  and we may take $\beta>0$ to be small enough so that the last expression
  will tend to $0$ faster than $1/n$.  Taking the union bound over all
  cardinalities $1\le a\le n/\ln{n}$ implies that the claim holds \whp{} in
  both cases.
\end{proof}

\subsection{Edges between large sets in the trace graph}
\begin{theorem}\label{trace:expansion:2}
  With high probability, there is an edge of $\Gamma$ between every pair of
  disjoint subsets $A,B\subseteq[n]$ satisfying $|A|,|B|\ge
  \frac{n\left(\ln{\ln{n}}\right)^{1.5}}{\ln{n}}$.
\end{theorem}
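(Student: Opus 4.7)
The plan is to reduce the statement, by monotonicity of the property, to pairs of disjoint vertex sets $A,B$ of size exactly $\Lambda = n(\lln{n})^{1.5}/\ln{n}$. Property \ref{P:bad} applied to $A$ implies that at most $\Lambda/2$ vertices outside $A$ have fewer than $\alpha(\lln{n})^{1.5}/2$ $G$-neighbours in $A$, so the set
\[
B^{*} := \left\{v \in B : \left|E_G(v,A)\right| \ge \alpha(\lln{n})^{1.5}/2\right\}
\]
has size at least $|B|-\Lambda/2 = \Lambda/2$. Combining this with \ref{P:regular}, at each exit of the walk from any $v \in B^{*}$, the next vertex lies in $A$ with probability at least
\[
q_0 := \frac{\alpha(\lln{n})^{1.5}/2}{(4/3)\alpha\ln{n}} = \frac{3(\lln{n})^{1.5}}{8\ln{n}}.
\]

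To convert these per-exit probabilities into a bound on the probability that the trace contains no $A$-$B$ edge, I would invoke the high-probability event $\mathcal{V}$ from \cref{visits} that $\nu(v)\ge\rho\ln{n}$ for every $v$. On $\mathcal{V}$, the total number of exits from $B^{*}$ during the walk is at least $k := |B^{*}|\rho\ln{n} \ge \Lambda\rho\ln{n}/2$. Let $\tau_1<\tau_2<\cdots$ denote the successive exit times from $B^{*}$, and let $E_i$ be the event $\{X_{\tau_i+1}\in A\}$. The Markov property gives
\[
\pr{E_i = 0 \mid X_0,\ldots,X_{\tau_i},\ \tau_i<\infty} \le 1 - q_0,
\]
and iterating through successive conditional expectations (treating $\tau_i$ as stopping times) yields
\[
\pr{\tau_k<\infty,\ E_1=\cdots=E_k=0} \le (1-q_0)^{k} \le \exp\left(-\tfrac{3\rho\, n(\lln{n})^3}{16\ln{n}}\right).
\]
On $\mathcal{V}$, the absence of any $A$-$B$ edge in the trace forces the event on the left-hand side.

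I would then take a union bound over all ordered disjoint pairs $(A,B)$ of size $\Lambda$, whose number is at most $\binom{n}{\Lambda}^{2} \le (en/\Lambda)^{2\Lambda} = \exp\left((2+o(1))\,n(\lln{n})^{2.5}/\ln{n}\right)$, since $\ln(en/\Lambda)\sim\lln{n}$. The per-pair exponent $3\rho\, n(\lln{n})^{3}/(16\ln{n})$ dominates this enumeration bound by a factor of order $(\lln{n})^{1/2}$, so the resulting sum tends to $0$; together with $\pr{\mathcal{V}^{c}}=o(1)$, this proves the conclusion for pairs of size exactly $\Lambda$, and monotonicity extends it to all larger pairs.

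The main obstacle is the non-independence of the events $\{E_i\}$: the stopping times $\tau_i$ themselves depend on the earlier $E_j$'s (going into $A$ at exit $i$ affects when the walk is next in $B^{*}$), so one cannot simply treat the $E_i$'s as independent Bernoulli trials with failure rate $1-q_0$. The Markov property applied at each $\tau_i$, followed by nested conditional expectations, is what makes the iterated inequality valid; this is why the bound must be stated for the joint event $\{\tau_k<\infty,\ E_1=\cdots=E_k=0\}$ rather than a conditional probability given $\{\tau_k<\infty\}$.
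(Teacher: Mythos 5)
Your proposal is correct and follows the same skeleton as the paper's argument: restrict to pairs of size exactly $\Lambda$, use \ref{P:bad} to isolate a subset $B'\subseteq B$ of size $\ge\Lambda/2$ whose vertices have many $G$-neighbours in $A$, use \ref{P:regular} to convert this to a per-exit probability $q_0\sim\frac{3(\lln n)^{1.5}}{8\ln n}$ of stepping into $A$, invoke \cref{visits} to guarantee $\rho\ln n$ exits per vertex, and finish with a union bound over ordered pairs. The exponents match exactly: compare your $\exp\bigl(-3\rho n(\lln n)^3/(16\ln n)\bigr)$ with the paper's $(1-3\Lambda/8n)^{\rho\Lambda\ln n/2}$.

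Where you diverge from the paper is precisely in the step you flag as the ``main obstacle.'' The paper avoids the stopping-time/iterated-conditioning argument entirely by a presampling device: it draws an i.i.d.\ family $x_v^k\sim\unif(N_G(v))$ in advance (indexed by vertex $v$ and exit count $k$), realizes the walk via $X_{t+1}=\unif(\{X_t,x_{X_t}^{\nu_t(X_t)}\})$, and observes that the event ``none of the first $\rho\ln n$ presampled exits from $v$ lands in $A$'' is then \emph{genuinely} independent across distinct $v\in B'$. The probability of all of them failing then factors as a plain product, with no need to reason about when the walk is where. Your stopping-time route is also valid, but as you note it requires care: the $\tau_i$ you define (times $t$ with $X_t\in B^*$ and $X_{t+1}\ne X_t$) are not stopping times in the raw filtration $\sigma(X_0,\ldots,X_t)$, since deciding that $t$ is an exit requires peeking at $X_{t+1}$. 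One must either work in the filtration generated by $(X_0,\ldots,X_t)$ augmented with the laziness indicator $\mathbf{1}\{X_{t+1}\ne X_t\}$ (observing that the identity of the chosen neighbour remains uniform given the history and that indicator), or switch to counting \emph{visits} rather than exits and absorb the laziness into a factor of $\tfrac12$ in $q_0$. The paper's presampling construction sidesteps this bookkeeping while achieving the same bound, which is presumably why the authors chose it.
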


\begin{proof}
  For each vertex $v\in[n]$ and integer $k\ge0$, let $x_v^k\sim
  \unif\left(N_G(v)\right)$, independently of each other. Let $\nu_t(v)$
  be the number of exits from vertex $v$ by the time $t$. Think of the random
  walk $X_t$ as follows:
  \begin{equation*}
    X_{t+1} =
    \unif\left(\left\{X_t,x_{X_t}^{\nu_t\left(X_t\right)}\right\}\right).
  \end{equation*}
  That is, with probability $1/2$, the walk stays, and with probability
  $1/2$ it goes to a uniformly chosen vertex from $N_G(v)$, independently
  from all previous choices.  Let $\Lambda=n(\ln{\ln{n}})^{1.5}/\ln{n}$, and
  fix two disjoint $A,B$ with $|A|=|B|=\Lambda$.  Denote
  \begin{equation*}
    B' = \left\{v\in B\midd\left|E(v,A)\right|\ge \frac{\Lambda \alpha\ln{n}}
    {2n}\right\},
  \end{equation*}
  and recall that according to property \ref{P:bad} of an
  $\alpha$-pseudo-random graph, $|B'|\ge\frac{\Lambda}{2}$.
  Note also that according to \ref{P:regular}, for every $v\in B'$
  \begin{equation*}
    \frac{\left|A\cap N_G(v)\right|}{\left|N_G(v)\right|}
    \ge \frac{\Lambda\alpha\ln{n}}{2nd_G(v)}
    \ge \frac{3\Lambda}{8n}.
  \end{equation*}

  Let $E_{B,A}$ be the event ``the walk has exited each of the vertices in $B$
  at least $\rho\ln{n}$ times, but has not traversed an edge from $B$ to $A$'',
  and for $v\in B'$, $k>0$, denote by $F_{v,k,A}$ the event ``$\forall0\le i<k,\
  x_v^i\notin A$''. Clearly, $E_{B,A}\subseteq \bigcap_{v\in
  B'}F_{v,\rho\ln{n},A}$, and $F_{v,\rho\ln{n},A}$ are mutually independent for
  distinct vertices $v$, hence
  \begin{equation*}
    \pr{E_{B,A}}
    \le \prod_{v\in B'} \pr{F_{v,\rho\ln{n},A}}
    \le \left(1-\frac{3\Lambda}{8n}\right)^{\rho\ln{n} \cdot\Lambda / 2}
  \end{equation*}
  Let $E$ be the event ``there exist two disjoint sets $A,B$ of size $\Lambda$
  such that the walk $X$ has exited each of the vertices of $B$ at least
  $\rho\ln{n}$ times, but has not traversed an edge from $B$ to $A$''.
  We have that
  \begin{align*}
    \pr{E} &\le
      \sum_{\substack{A\subseteq[n]\\|A|=\Lambda}}
      \sum_{\substack{B\subseteq[n]\\|B|=\Lambda}} \pr{E_{B,A}}\\
    &\le\binom{n}{\Lambda}^2
      \left(1-\frac{3\Lambda}{8n}\right)^{\rho\ln{n} \cdot\Lambda / 2}\\
    &\le \left(\frac{en}{\Lambda}\right)^{2\Lambda}
    \left(1-\frac{3\Lambda}{8n}\right)^{\rho\ln{n} \cdot\Lambda / 2}\\
    &\le \exp\left(2\Lambda\ln{(en/\Lambda)}
    - \frac{3\rho\Lambda^2 \ln{n}}{16n}\right)\\
    &\le \exp\left(3\cdot\frac{n}{\ln{n}}(\ln{\ln{n}})^{2.5}
    - \frac{n}{\ln{n}}(\ln{\ln{n}})^{2.9}\right) = o(1).
  \end{align*}
  Finally, let $E'$ be the event ``there exist two disjoint sets $A,B$ of size
  $\Lambda$ such that the walk $X$ has not traversed an edge from $B$ to
  $A$''.  It follows from \cref{visits} that
  \begin{align*}
    \pr{E'} &= \pr{E',\ \forall v\in [n]:\nu(v)\ge\rho\ln{n}}
    + \pr{E',\ \exists v\in [n]:\nu(v)<\rho\ln{n}}\\
    &= \pr{E} + o(1) = o(1),
  \end{align*}
  and this completes the proof.
\end{proof}

\section{Hamiltonicity and vertex connectivity}\label{sec:hamcon}
This short section is devoted to the proof of \cref{thm:gnp}, which is a
simple corollary of the results presented in the previous sections. In
addition to these results, we will use the following Hamiltonicity criterion
by Hefetz et al:

\begin{samepage}
\begin{lemma}[\cite{HKS09}, Theorem 1.1]\label{ham_crit}
Let $12 \leq d \leq e^{\sqrt[3]{\ln n}}$ and let $G$ be a graph on $n$
vertices satisfying properties $(Q1)$, $(Q2)$ below:
\begin{description}
  \item[(Q1)] For every $S \subseteq [n]$,
    if $|S| \leq  \frac{n \ln \ln n \ln d}{d \ln n \ln \ln \ln n}$,
    then $|N(S)| \geq d|S|$;
\item[(Q2)] There is an edge in $G$ between any two disjoint subsets $A,B
  \subseteq [n]$ such that
  $|A|,|B| \geq  \frac{n \ln \ln n \ln d}{4130 \ln n \ln \ln \ln n}$.
\end{description}
Then $G$ is Hamiltonian, for sufficiently large $n$.
\end{lemma}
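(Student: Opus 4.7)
The plan is to invoke P\'osa's rotation--extension technique, which is the standard route from expansion conditions to Hamiltonicity. Assume for contradiction that $G$ satisfies (Q1) and (Q2) but is not Hamiltonian. Observe first that (Q1) applied to any connected component of size below the (Q1) threshold forces $G$ to be connected, so every longest path in $G$ has length at least $\frac{n \ln \ln n \ln d}{d \ln n \ln \ln \ln n}$.

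Let $P = v_0 v_1 \cdots v_\ell$ be a longest path in $G$; by the maximality of $P$, every neighbour of $v_0$ (and of $v_\ell$) lies on $P$. A \emph{P\'osa rotation} at $v_0$ along a chord $v_0 v_i \in E(G)$ replaces $P$ by $v_{i-1} v_{i-2} \cdots v_0 v_i v_{i+1} \cdots v_\ell$, producing another longest path whose left endpoint is now $v_{i-1}$. Iterating all such rotations while keeping $v_\ell$ fixed yields a set $\mathrm{END} \subseteq V(P)$ of reachable left endpoints. The classical P\'osa lemma, fed by (Q1), shows that in each round at most $2|\mathrm{END}|$ vertices of $N(\mathrm{END})$ are blocked from becoming new endpoints (namely the at most two neighbours on $P$ of each current endpoint corresponding to already-broken edges), so at least $|N(\mathrm{END})| - 2|\mathrm{END}| \ge (d-2)|\mathrm{END}|$ fresh endpoints are produced per round. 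Iterating until (Q1) is saturated forces $|\mathrm{END}|$ up to the (Q1) threshold, which in particular meets the (Q2) threshold.

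For each $u \in \mathrm{END}$, fix a rotated longest path $P_u$ with left endpoint $u$ and run the symmetric rotation procedure on its right endpoint while keeping $u$ fixed, yielding a set $\mathrm{END}_u$ of reachable right endpoints that again meets the (Q2) threshold. Apply (Q2) to $\mathrm{END}$ and $\mathrm{END}_u$ to produce an edge $uw \in E(G)$ with $u \in \mathrm{END}$ and $w \in \mathrm{END}_u$; then $uw$ closes $P_u$ into a cycle $C$ with $V(C) = V(P)$. If $V(P) \subsetneq [n]$, connectivity of $G$ supplies a vertex outside $C$ adjacent to some vertex of $C$, which, after cutting an adjacent edge of $C$, yields a path strictly longer than $P$, contradicting maximality. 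Hence $V(P) = [n]$ and $C$ is a Hamilton cycle.

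The principal technical obstacle is executing the quantitative P\'osa expansion step cleanly: one must track precisely which vertices are inadmissible as new endpoints in each rotation round (those adjacent to a current endpoint via an already-broken edge of the path) and verify that the expansion rate $d$ in (Q1) beats this loss so $|\mathrm{END}|$ grows geometrically until it hits the threshold. The constants in (Q1) and (Q2) are calibrated so that this bookkeeping closes and so that the left- and right-rotation endpoint sets are simultaneously large enough for (Q2) to deliver the closing edge.
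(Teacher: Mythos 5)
This lemma is not proved in the paper at all; it is quoted verbatim as Theorem~1.1 of Hefetz, Krivelevich and Szab\'o~\cite{HKS09} and used as a black box. There is therefore no ``paper proof'' to compare against, and your blind attempt is a reconstruction of an external result.

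On the merits of your sketch: the rotation--extension route is indeed the right framework, and the closing step (use (Q2) to find an edge between the two large endpoint sets, close the cycle, and then use connectivity to extend if the cycle is not spanning) is correct. But the quantitative core has a genuine gap. You claim that iterating P\'osa rotations ``forces $|\mathrm{END}|$ up to the (Q1) threshold, which in particular meets the (Q2) threshold.'' The (Q1) threshold is $\frac{n\ln\ln n\,\ln d}{d\ln n\,\ln\ln\ln n}$ while the (Q2) threshold is $\frac{n\ln\ln n\,\ln d}{4130\ln n\,\ln\ln\ln n}$; for $d>4130$ the (Q1) threshold is \emph{smaller} than the (Q2) threshold, so reaching the first does not give you the second. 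Moreover, the standard P\'osa lemma does not give ``at least $(d-2)|\mathrm{END}|$ fresh endpoints per round''; what it gives is a fixed-point structural inclusion $N(\mathrm{END})\subseteq \mathrm{END}^{-}\cup\mathrm{END}^{+}$ (shifts along the original path) and hence $|N(\mathrm{END})|<2|\mathrm{END}|$ for the \emph{final} set, from which one only concludes $|\mathrm{END}|$ exceeds the (Q1) threshold --- not that it jumps by a factor of order $d$. Bridging the gap between the two thresholds is exactly where the real work of~\cite{HKS09} lies: they run a more delicate rotation argument that genuinely exploits the full $d$-fold expansion (not merely $2$-fold) to show that the set of reachable endpoints exceeds the (Q2) threshold. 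Your sketch asserts the geometric growth but does not establish it, and the one quantitative comparison it does make is wrong for large $d$.
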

\end{samepage}

\begin{proof}[Proof of \cref{thm:gnp}]
  Noting that \cref{blackbox} follows from
  \cref{trace:expansion:1,trace:expansion:2}, and setting $d=\ln^{1/2}n$ in the
  above lemma, we see that its conditions are typically met by the trace
  $\Gamma_L$, $L=(1+\varepsilon)n\ln n$, with much room to spare actually.
  Hence $\Gamma_L$ is \whp{} Hamiltonian.

  \Cref{blackbox} also implies that \whp{} $\Gamma_L$ is an 
  $\left(n/\ln{n},\beta\ln{n}\right)$-expander, for some $\beta>0$,
  and in addition, that there is an edge connecting every two disjoint sets with
  cardinality at least $n(\lln{n})^{1.5}/\ln{n}$.
  Set $k=\beta\ln{n}$, and suppose to the contrary that under these
  conditions, $\Gamma_L^v$ is not $k$-connected. Thus, there is a cut
  $S\subseteq[n]$ with $|S|\le k-1$ such that $[n]\smallsetminus S$ can be
  partitioned into two non-empty sets, $A,B$, with no edge connecting them.
  Without loss of generality, assume $|A|\le|B|$.  If $|A|\le n/\ln{n}$ then
  $k\le \beta\ln{n}|A|\le |N(A)| \le |S|<k$, a contradiction.
  If $n/\ln{n}<|A|<\beta n-k+1$ then take $A_0\subseteq A$ with
  $|A_0|=n/\ln{n}$, and then $\beta n\le |N(A_0)|\le |A\cup S|<\beta n$, again
  a contradiction.  Finally, if $|A|\ge \beta n-k+1$ then $A,B$ are both of size
  at least $n(\lln{n})^{1.5}/\ln{n}$, thus there is an edge connecting the two
  sets, again a contradiction.
\end{proof}

\section{Hitting time results for the walk on \texorpdfstring{$K_n$}{Kn}}
\label{sec:hittingtime}

From this point on, a \emph{lazy} random walk on $K_n$ is a walk which starts
at a uniformly chosen vertex, and at any given step, stays at the current
vertex with probability $1/n$.  Of course, this does not change matters much,
and the random walk of the theorem, including its cover time, can be obtained
from the lazy walk by simply ignoring loops.  Considering the lazy version
makes things much more convenient; observe that for any
$t\ge 0$, the modified random walk is equally likely to be located at any of
the vertices of $K_n$ after $t$ steps, regardless of its history.  Hence, for
any $t$, if we look at the trace graphs $\odd{\Gamma_t}$ and $\even{\Gamma_t}$
formed by the edges (including loops) traversed by the lazy walk at its odd,
respectively even, steps, they are mutually independent, and the graphs formed
by them are distributed as $\hat{G}(n,m)$ with $m=\ceil{t/2}$ and
$m=\floor{t/2}$, respectively, where $\hat{G}(n,m)$ is the random (multi)graph
obtained by drawing independently $m$ edges (with replacement) from all
possible \emph{directed} edges (and loops) of the complete graph $K_n$, and
then ignoring the orientations.  Note that whenever $m=o\left(n^2\right)$, the
probability of a given non-loop edge to appear in $\hat{G}(n,m)$ is
$\sim 2m/n^2$.

Let now for $k\ge 1$,
\begin{align}
  t_-^{(k)} &= n(\ln{n}+(k-1)\ln{\ln{n}}-\ln{\ln{\ln{n}}}),\label{t_-}\\
  t_+^{(k)} &= n(\ln{n}+(k-1)\ln{\ln{n}}+\ln{\ln{\ln{n}}}).
\end{align}
We may as well just write $t_-$ or $t_+$, when $k$ is clear or does not
matter.  Recall the definition of $\tau_C^k$ from \eqref{tauCk}.
The following is a standard result on the coupon collector problem:
\begin{theorem}[Proved in \cite{ER_class}]\label{classical}
  For every $k\ge 1$, \whp{}, $t_-^{(k)} < \tau_C^k < t_+^{(k)}$.
\end{theorem}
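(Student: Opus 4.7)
The plan is to reduce the statement to the classical $k$-th coupon-collector bound. Since the lazy walk on $K_n$ stays put with probability $1/n$ and otherwise jumps uniformly to one of the other $n-1$ vertices, the conditional law of $X_{t+1}$ given $X_t$ is uniform on $[n]$; together with the uniformly random start, this makes $X_0,X_1,\dots$ an i.i.d.\ uniform sequence on $[n]$, so $\tau_C^k$ is exactly the first time at which each of $n$ coupons has been seen at least $k$ times. This reduction is the crucial observation; from here the proof becomes a purely analytic estimate on left tails of binomials.

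For the upper bound $\tau_C^k<t_+^{(k)}$ I would use a first-moment union bound. Writing $N_v(t)$ for the number of visits to $v$ among $X_1,\ldots,X_t$, we have $N_v(t)\sim\mathrm{Bin}(t,1/n)$, and a direct computation in which the $j=k-1$ term dominates the left-tail sum gives
\begin{equation*}
\pr{N_v\mleft(t_+^{(k)}\mright)<k}\sim\frac{\mleft(\ln n\mright)^{k-1}}{(k-1)!}\cdot e^{-t_+^{(k)}/n}=\frac{1}{(k-1)!\,n\ln\ln n}.
\end{equation*}
A union bound over $v\in[n]$ yields $\pr{\tau_C^k>t_+^{(k)}}=O(1/\ln\ln n)=o(1)$.

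For the lower bound $\tau_C^k>t_-^{(k)}$ I would apply the second-moment method to $Y$, the number of vertices visited fewer than $k$ times by time $t_-^{(k)}$. The same estimate as above gives $\E{Y}\sim \ln\ln n/(k-1)!\to\infty$. For the second moment, for distinct $u,v$ the pair $(N_u(t),N_v(t))$ is a marginal of a trinomial with cell probabilities $(1/n,1/n,1-2/n)$; conditioning on $N_u(t_-^{(k)})=j$ makes $N_v(t_-^{(k)})$ distributed as $\mathrm{Bin}(t_-^{(k)}-j,1/(n-1))$, and since on the event of interest $j<k$ is bounded, this binomial differs from $\mathrm{Bin}(t_-^{(k)},1/n)$ only in lower-order terms of the left-tail estimate. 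A short calculation therefore gives
\begin{equation*}
\pr{N_u\mleft(t_-^{(k)}\mright)<k,\ N_v\mleft(t_-^{(k)}\mright)<k}=(1+o(1))\,\pr{N_u\mleft(t_-^{(k)}\mright)<k}\pr{N_v\mleft(t_-^{(k)}\mright)<k},
\end{equation*}
so $\mathrm{Var}(Y)=o(\E{Y}^2)$ and Chebyshev's inequality gives $\pr{Y=0}=o(1)$.

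The main obstacle will be the asymptotic-independence estimate in the second-moment step: one needs to show that the ratio of the conditional probability $\pr{N_v(t_-^{(k)})<k\mid N_u(t_-^{(k)})=j}$ to the unconditional $\pr{N_v(t_-^{(k)})<k}$ tends to $1$ uniformly in $j\in\{0,\dots,k-1\}$. This is a routine but somewhat delicate binomial calculation, relying on the fact that replacing the success probability $1/n$ by $1/(n-1)$ and the length $t_-^{(k)}$ by $t_-^{(k)}-j$ shifts the dominant term of the left tail by a factor $1+O((\ln n)/n)$, which is absorbed by the $(1+o(1))$ above.
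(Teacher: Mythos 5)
The paper does not actually prove this statement: it cites it as a classical result of Erd\H{o}s and R\'enyi (the $k$-fold coupon collector, or ``double Dixie cup,'' threshold). So your proposal fills in a proof that the authors relegated to the bibliography. Your reduction is precisely the observation the paper itself uses one paragraph earlier: under the $1/n$-lazy walk on $K_n$, the conditional law of $X_{t+1}$ given $X_t$ assigns mass $1/n$ to every vertex, so with a uniform start the sequence is i.i.d.~uniform and $\tau_C^k$ is the $k$-th coupon-collector time. Your first-moment/second-moment argument is then the standard Erd\H{o}s--R\'enyi proof and is sound.

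Two small corrections. First, in your tail estimate the third iterated logarithm appears in $t^{(k)}_{\pm}$, so the $j=k-1$ term of the binomial left tail gives
\begin{equation*}
\pr{N_v\mleft(t_+^{(k)}\mright)<k}\sim\frac{(\ln n)^{k-1}}{(k-1)!}\,e^{-t_+^{(k)}/n}
=\frac{1}{(k-1)!\,n\,\ln\ln\ln n},
\end{equation*}
with $\ln\ln\ln n$, not $\ln\ln n$, in the denominator; the same correction applies to your lower-bound computation, where $\E{Y}\sim(\ln\ln\ln n)/(k-1)!$. The argument is unaffected since $\ln\ln\ln n\to\infty$. Second, the theorem as stated concerns the simple (non-lazy) walk, whereas your i.i.d.~reduction holds for the lazy walk; you should note, as the paper does, that the non-lazy cover time is obtained by deleting the $O(\ln n)$ loops occurring up to time $O(n\ln n)$, a quantity negligible compared to the window width $2n\ln\ln\ln n$, so the conclusion transfers. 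With these repairs the proof is complete and matches the classical argument rather than producing a new one.
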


To ease notations, we shall denote $\Gamma_+ = \Gamma_{t_+^{(k)}}$ and
similarly $\Gamma_- = \Gamma_{t_-^{(k)}}$. We add a superscript $\mathrm{o}$ or
$\mathrm{e}$ to consider the odd, respectively even, steps only.
We denote the edges of the walk by $\{e_i\mid i>0\}$.

We note that the trace of our walk is typically not a graph, but rather a
multigraph. However, that fact does not matter much, as the multiplicity of
the edges of that multigraph is typically well bounded, as the following
lemma shows:

\begin{lemma}\label{maxmul}
  With high probability, the multiplicity of any edge of $\Gamma_+$ is at
  most 4.
\end{lemma}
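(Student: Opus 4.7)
The plan hinges on the special structure of the lazy walk on $K_n$: the transition probability from any vertex $u$ to any vertex $v$ is $1/n$ (both when $u=v$, via the laziness, and when $u\ne v$, via the uniform choice of a neighbour). Consequently $X_0,X_1,\ldots$ are in fact i.i.d.\ uniform on $[n]$, so for any fixed non-loop pair $\{u,v\}$ and any fixed time $i\ge 1$ we have $\pr{\{X_{i-1},X_i\}=\{u,v\}}=2/n^2$; for a loop at a fixed vertex the corresponding probability is $1/n^2$. Crucially, the indicators $\mathbf{1}[\{X_{i-1},X_i\}=\{u,v\}]$ for odd $i$ (and separately for even $i$) involve disjoint pairs of coordinates and are therefore mutually independent.

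Using this, I would decompose $M_{u,v}$, the multiplicity of a non-loop edge $\{u,v\}$ in $\Gamma_+$, as $M_{u,v} = M^{\mathrm{o}}_{u,v} + M^{\mathrm{e}}_{u,v}$, where the two summands are independent and each distributed as $\bin(m,2/n^2)$ with $m = \Theta(t_+^{(k)}) = \Theta(n\ln n)$. If $M_{u,v}\ge 5$, then at least one of the two summands must be at least $3$, and by \cref{chernoff:trivial},
\[
  \pr{M^{\mathrm{o}}_{u,v}\ge 3}\le \binom{m}{3}\left(\frac{2}{n^2}\right)^3 = O\!\left(\frac{\ln^3 n}{n^3}\right),
\]
and the same bound holds for $M^{\mathrm{e}}_{u,v}$. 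A union bound over the $\binom{n}{2}\le n^2/2$ non-loop pairs yields $O(\ln^3 n / n) = o(1)$; the analogous argument with $2/n^2$ replaced by $1/n^2$ and the union taken over the $n$ vertices handles loop multiplicities with an even smaller probability.

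I do not expect a genuine obstacle: the statement is morally a third-moment computation, and the i.i.d.\ uniform property of the lazy walk on $K_n$ collapses what could otherwise be a Markov-chain argument into an elementary binomial estimate. The only thing to keep track of is that $t_+^{(k)} = O(n\ln n)$ for the values of $k$ under consideration, which is immediate from the definition in \eqref{t_-}.
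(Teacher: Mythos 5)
Your proof is correct and takes essentially the same route as the paper: the paper observes that the odd- and even-indexed traversals form two independent copies of $\hat G(n,m)$ with $m=\Theta(n\ln n)$, so multiplicity $\ge 5$ forces multiplicity $\ge 3$ in one of them, the probability of which is $O(t_+^3/n^6)=o(n^{-2})$, and a union bound over edges finishes. Your rendering of this via i.i.d.\ uniform coordinates and $\bin(m,2/n^2)$ is just a slightly more explicit phrasing of the same argument.
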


\begin{proof}
  Suppose the multiplicity of an edge $e$ in $\Gamma_+$ is greater than 4; in
  that case, its multiplicity in $\odd{\Gamma_+}$ or in $\even{\Gamma_+}$ is at
  least $3$. As $\odd{\Gamma_+}\sim \hat{G}(n,\ceil{t_+/2})$, we have that the probability for
  that to happen is $O(t_+^3/n^6)=o(n^{-2})$. Applying the union bound over
  all possible edges gives the desired result for the odd case (and the even
  case is identical).
\end{proof}

\subsection{\texorpdfstring{$k$}{k}-connectivity}
Clearly, if a given vertex has been visited at most $k-1$ times, or has been
visited $k$ times without exiting the last time, its degree in the trace is
below $2k-1$ or $2k$ respectively, hence $\tau_C^k\le\tau_\delta^{2k-1}$ and
$\tau_C^k +1 \le \tau_\delta^{2k}$; furthermore, if some vertex has a
(simple) degree less than $m$, then removing all of its neighbours from
the graph will disconnect it, hence it is not $m$-vertex-connected, thus
$\tau_\delta^{m} \le \tau_\kappa^{m}$.
To prove \cref{kn_conn} it therefore suffices to prove the following two claims:
\begin{claim}\label{kn_conn_1}
  For any constant integer $k\ge 1$, \whp{} $\tau_C^k \ge \tau_\delta^{2k-1}$
  and $\tau_C^k +1 \ge \tau_\delta^{2k}$.
\end{claim}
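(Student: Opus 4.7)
The plan is to reduce both inequalities to the fact that the lazy walk on $K_n$ produces an IID uniform sequence $X_0, X_1, \ldots$ on $[n]$, so the trace structure at each vertex is controlled by independent uniforms; by \cref{classical} I may assume throughout that $\tau_C^k \le t_+^{(k)}$. Writing $v^* = X_{\tau_C^k}$ for the last vertex to be $k$-covered, a direct count shows that the number of multigraph incidences of any vertex $v$ in $\Gamma_{\tau_C^k}$ equals
\[
I_v = 2\mu_{\tau_C^k}(v) - \mathbf{1}\{v = X_0\} - \mathbf{1}\{v = v^*\},
\]
and $d'(v) = I_v - C_v$, where $C_v$ counts ``extras'': incidences that are either self-loops (created by two consecutive visits of the lazy walk at $v$, each contributing $2$ to $C_v$) or duplicates of a non-$v$ neighbor already seen. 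My first task is to rule out the degenerate case $v^* = X_0$, which would give $I_{v^*} = 2k - 2$ and immediately falsify the claim. The $k$-th visit time of $X_0$ is distributed as $1 + \mathrm{NegBin}(k-1, 1/n)$ with mean $(k-1)n$, and a Chernoff-type bound shows it exceeds $t_-^{(k)} \sim n\ln n$ with probability at most $O((\ln n)^{k-1}/n) = o(1)$. Hence \whp{} $v^* \ne X_0$, and $I_v \ge 2k - 1$ for every vertex.

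Next I would exploit the IID structure. Conditioning on the visit times of any fixed vertex $v$, the positions of the walk at non-visit steps are IID uniform on $[n] \setminus \{v\}$, and a birthday-style estimate gives $\pr{C_v \ge c} = O(n^{-\lceil c/2 \rceil})$ for constant $c$ and bounded $I_v$, where the $\lceil c/2 \rceil$ arises because self-loops cost only a single $1/n$ factor but contribute $2$ to $C_v$. Since $v$ fails $d'(v) \ge 2k - 1$ iff $C_v \ge I_v - 2k + 2$, a case analysis on $\mu_v$ suffices: the at-most-two boundary vertices $v^*$ and $X_0$ with $I_v = 2k-1$ each fail with probability $O(1/n)$, contributing $O(1/n)$ in total; any other vertex with $\mu_v = k$ has $I_v = 2k$ and fails with probability $O(1/n)$ as well (driven by a single self-loop), and a standard coupon-collector estimate bounds the number of such vertices by $O(\log n)$ \whp{}, giving a total contribution of $O((\log n)/n) = o(1)$; vertices with $\mu_v \ge k + 1$ require $C_v \ge 4$ or more and satisfy sharper failure bounds of $O(1/n^2)$ (or better), so even a polylogarithmic-fold union bound over them contributes $o(1)$. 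Summing yields $\pr{\delta(\Gamma_{\tau_C^k}) < 2k - 1} = o(1)$, which is part~(a). Part~(b) follows from the same argument at time $\tau_C^k + 1$: the extra step adds one exit edge from $v^*$, raising $I_{v^*}$ to $2k$ and shifting all the thresholds up by one, the only new case being the $1/n$-probability event that the new step is itself a self-loop at $v^*$, which is also controlled.

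The hard part will be the single random vertex $v^*$, which is exposed in the sense that its $I_{v^*} = 2k - 1$ incidences are exactly the minimum required, so a single coincidence there suffices to break the claim. The remedy is the conditional-IID observation above: conditioning on the visit pattern of any fixed vertex $v$ makes its $2k - 1$ candidate neighbors look like independent uniform samples on $[n] \setminus \{v\}$, regardless of whether $v$ turns out to be $v^*$, which lets me treat each vertex individually rather than taking a loose union bound over all $n$ vertices at the minimum threshold. Justifying this conditioning precisely around the random stopping time $\tau_C^k$, and properly accounting for the self-loops introduced by the laziness of the walk, is where the bookkeeping is most delicate; once that is in place, the remaining failure probabilities are dominated in a routine way.
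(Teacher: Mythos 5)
Your strategy of writing $d'(v)=I_v-C_v$ and bounding the collision count $C_v$ is sound in outline, and the individual tail estimates you sketch are plausible, but the part you yourself describe as ``where the bookkeeping is most delicate'' is a genuine gap rather than a routine detail. Fixing the visit set $S_v=\{i:X_i=v\}$ does make the remaining slots IID uniform on $[n]\smallsetminus\{v\}$, but $\{v=v^*\}$ is the event that the $k$-th visit time of $v$ is the maximum over all vertices; this depends on the entire walk, including the very $O(k)$ entry/exit slots whose collisions determine $C_v$, so conditioning on $\{v=v^*\}$ (and on the value of $\tau_C^k$) is not simply conditioning on $S_v$. Your case analysis on $\mu_v$ at the random time $\tau_C^k$ has the same problem: the coupon-collector count of vertices with $\mu_v=k$ is taken at a stopping time and is correlated with the per-vertex collision events. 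Since you need an effective bound for each of $n$ vertices, these correlations cannot be deferred; they are the content of the proof.

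The paper avoids the issue entirely by never singling out the random vertex $v^*$. It fixes a \emph{deterministic} set $\Small$ of at most $n^{0.2}$ candidate low-degree vertices (those of multigraph degree $<d_0=\Theta(\ln n)$ in the odd-step trace $\odd{\Gamma}_{t_-^{(k)}}$, a deterministic time window), and proves on deterministic windows that no vertex of $\Small$ is incident to a loop or multi-edge anywhere in $\Gamma_{t_+^{(k)}}$ (exploiting the independence of odd- and even-step edges to decouple ``$v\in\Small$'' from a chosen loop position) and that all edge multiplicities are at most $4$. Given these events together with $t_-^{(k)}<\tau_C^k<t_+^{(k)}$, the bound $\delta(\Gamma_{\tau_C^k+1})\ge 2k$ becomes \emph{deterministic}: a vertex outside $\Small$ already has $\ge(d_0-8)/4\gg 2k$ distinct neighbours in $\odd{\Gamma}_{t_-^{(k)}}\subseteq\Gamma_{\tau_C^k}$, while a vertex in $\Small$ other than $X_0$ has, by time $\tau_C^k+1$, been entered and exited at least $k$ times with no loops or repeated edges, hence has simple degree $\ge 2k$. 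No union bound is ever taken at a $\Theta(1/n)$ threshold, so no stopping-time conditioning arises. If you want to repair your version along similar lines, replace the conditioning on $\{v=v^*\}$ by a deterministic vertex partition (e.g.\ by degree in one half of the walk up to a fixed time) and establish the collision bounds over the deterministic interval $[0,t_+^{(k)}]$ rather than $[0,\tau_C^k]$.
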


\begin{claim}\label{kn_conn_2}
  For any constant integer $m\ge 1$, \whp{} $\tau_\delta^m \ge
  \tau_\kappa^m$.
\end{claim}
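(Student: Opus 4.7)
The plan is to invoke \cref{rc_lemma} and reduce the $m$-vertex-connectivity of $\Gamma_{\tau_\delta^m}$ to an expansion statement. Choosing $R = \ceil{(n+m)/(2(m+1))}$ so that $R(m+1) \ge (n+m)/2$, it suffices to show $\Gamma_{\tau_\delta^m}$ is \whp{} an $(R,m)$-expander. Equivalently, via the standard three-regime reduction, I would verify at time $\tau_\delta^m$: (a) minimum simple degree $\ge m$, immediate by the definition of $\tau_\delta^m$; (b) every set $S \subseteq [n]$ with $2 \le |S| \le s_0$ has $|N(S)| \ge m$; and (c) every two disjoint $A, B \subseteq [n]$ with $|A|, |B| > s_0$ share an edge in $\Gamma_{\tau_\delta^m}$. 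Taking $s_0$ of order $n(\lln{n})^{2}/\ln{n}$ suffices: a putative cut $T$ with $|T| \le m - 1$ separating a smallest component $C$ contradicts (a), (b), or (c) according to whether $|C| = 1$, $2 \le |C| \le s_0$, or $|C| > s_0$.

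Property (c) is monotone in the trace, and \cref{classical} together with the trivial bound $\tau_\delta^m \ge \tau_C^{\ceil{m/2}}$ gives $\tau_\delta^m \ge t := t_-^{(\ceil{m/2})}$ \whp{}, so it is enough to verify (c) at the deterministic time $t$. Using that $\odd{\Gamma_t} \sim \hat{G}(n, \ceil{t/2})$ consists of $\Theta(n \ln{n})$ independent uniformly random directed edges, the probability that a fixed disjoint pair $A, B$ of size $\ge s_0$ shares no such edge is at most $\exp(-|A||B|\ln{n}/n)$; the chosen $s_0$ makes this much smaller than $\binom{n}{s_0}^{-2}$, so a union bound closes (c).

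The main obstacle is (b): a naive union bound using only the ``no walk transition leaves $T$'' structure gives, via a transfer-matrix estimate, a per-pair probability of order $n^{-|S|}$, which does not beat the $\sim n^{|S| + m - 1}$ choices of $(S, T)$. The resolution is to additionally use the min-degree condition. The dominant contribution to $n^{-|S|}$ comes from walks where some $v \in S$ is unvisited, which is incompatible with $\delta(\Gamma_{\tau_\delta^m}) \ge m$ --- the latter forces $\mu_{\tau_\delta^m}(v) \ge \ceil{m/2}$ for every $v \in S$. Since the lazy walk on $K_n$ satisfies that $X_0, X_1, \ldots$ are i.i.d.\ uniform on $[n]$, conditional on the visit counts $V_v := \mu_t(v)$ the $2V_v$ preceding and following walk positions at $v$'s visits are i.i.d.\ uniform on $[n]$; the bad event requires all of them to fall into the fixed set $T$ of size $|S|+m-1$, occurring with conditional probability $((|S|+m-1)/n)^{2V_v}$. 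Summing the weight $\pr{V_v = k} = O((\ln{n})^k/(nk!))$ against this over $k \ge \ceil{m/2}$ yields a per-pair probability of order $((\ln{n})^{m/2}/n^{m+1})^{|S|}$, and multiplying by the $n^{|S| + m - 1}$ union-bound factor gives $o(1)$. Combining (a), (b), and (c) via the three-regime argument yields the $m$-vertex-connectivity of $\Gamma_{\tau_\delta^m}$.
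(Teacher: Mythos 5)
Your three--regime decomposition (a)/(b)/(c) is a sensible reduction, and both (a) and (c) are fine as sketched --- (c) in particular is a monotone property, so bracketing $\tau_\delta^m$ below by $t_-^{(\lceil m/2\rceil)}$ and then a straightforward union bound in $\odd{\Gamma_t}$ works. But the argument for (b), which you correctly flag as the main obstacle, has genuine gaps that I don't see how to close along the route you describe.

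The core difficulty, which your sketch elides, is that $\tau_\delta^m$ is a stopping time whose value is determined by the very same walk trajectory you are trying to bound probabilities for. The calculation ``$\pr{V_v = k} = O((\ln n)^k/(nk!))$, then conditional on $V_v$ the $2V_v$ adjacent positions are i.i.d.\ uniform, then multiply over $v\in S$'' silently treats $\tau_\delta^m$ as a fixed time $t\sim n\ln n$ and treats the per-vertex events as independent. Neither is true. The event $\{N_{\Gamma_{\tau_\delta^m}}(S)\subseteq T\}$ is neither monotone increasing nor decreasing in time, so you cannot simply evaluate it at $t_-$ or $t_+$ and then slide; and the bad event itself (``$\delta\ge m$ for the first time AND $N(S)\subseteq T$'') is a complicated intersection where the stopping criterion and the event to be bounded are correlated. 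Separately, the product over $v\in S$ is not justified: the times adjacent to visits of different $v\in S$ can coincide (if the walk oscillates inside $S$), and the relevant positions are not independent across $v$. One can imagine salvaging the product with a more careful excursion decomposition, but as stated it's a leap, and the factor you need ($\approx n^{-(m+1)}$ per vertex of $S$) is delicate enough that one cannot just wave at it.

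The paper avoids exactly these problems by not analyzing $\Gamma_{\tau_\delta^m}$ head-on. Instead it constructs a deterministic-flavoured subgraph $\Gamma_*=\odd{\Gamma_-}\cup\{e_i: i\le \tau_C^k+1,\ e_i\cap\Small\ne\varnothing\}$, shows $\Gamma_*$ has min degree $\ge 2k$ (Lemma~\ref{mindeg}) and contains a sparse $(n/(2k+2),2k)$-expander $\Gamma_0$ (Lemma~\ref{gamma0}), and then invokes Lemma~\ref{rc_lemma}. The key structural trick is that $\odd{\Gamma_-}$ is an honest $\hat G(n,m)$ at a \emph{deterministic} time, with the few low-degree vertices $\Small$ patched up separately using the fact that they are visited $\ge k$ times by time $\tau_C^k+1$; this decouples the i.i.d.\ randomness from the small set of problematic vertices, and $\Small$ is shown to be tiny and well-separated (Lemmas~\ref{small:small}, \ref{noloops}, \ref{short_paths}), which is what replaces the independence you assumed. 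This construction also feeds directly into the subsequent Hamiltonicity proof (via boosters), so it isn't wasted effort. If you want to pursue your direct route, you would at minimum need to (i) replace the conditioning at $\tau_\delta^m$ by a rigorous bracketing or a reveal argument that handles the stopping time, and (ii) prove a correlation inequality or excursion estimate justifying the product over $v\in S$; both are nontrivial, and the paper's $\Gamma_*$ construction is considerably less painful.
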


\subsubsection{The set \texorpdfstring{$\Small$}{SMALL}}
To argue about the relation between the number of visits of a vertex and its
degree, we would wish to limit the number of loops and multiple edges
incident to a vertex.  This can be easily achieved for small degree vertices,
which are the only vertices that may affect the minimum degree anyway. This
gives motivation for the following definition.

Denote $d_0=\floor{\delta_0\ln{n}}$ for $\delta_0=e^{-20}$.
\begin{equation*}
  \Small = \left\{v\in [n]\mid d_{\odd{\Gamma_-}}(v)<d_0\right\}
\end{equation*}
be the set of all small degree vertices of $\odd{\Gamma_-}$.
Note that the exact value of $\delta_0$ is not important.  We will simply need
it to be small enough for the proof of \cref{final}.

\begin{lemma}\label{smalldeg}
  Let $m\sim n\ln{n}/2$, $\hat{G}\sim \hat{G}(n,m)$ and $v\in[n]$.  Then,
  $\pr{d_{\hat{G}}(v)<d_0}\le n^{-0.9}$.
\end{lemma}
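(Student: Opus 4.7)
The plan is to reduce the statement to a standard Chernoff lower-tail bound on a binomial random variable. Write each of the $m$ i.i.d.\ directed edges forming $\hat G$ as an ordered pair $(u,w)$ drawn uniformly from $[n]^2$, and let $Z$ count how many of these pairs are incident to $v$, i.e.\ have $u=v$ or $w=v$. Each such pair contributes at least $1$ to the multigraph degree $d_{\hat G}(v)$ (the loop $(v,v)$ contributes exactly $2$), so $d_{\hat G}(v)\ge Z$ pointwise, and it is enough to show $\pr{Z<d_0}\le n^{-0.9}$.

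By inclusion–exclusion each ordered pair is incident to $v$ independently with probability $q=2/n-1/n^2$, so $Z\sim\bin(m,q)$ and $\mu:=\E{Z}=mq=(1+o(1))\ln n$ since $m\sim n\ln n/2$. Writing $d_0\le\alpha\mu$ with $\alpha=(1+o(1))\delta_0=(1+o(1))e^{-20}$, I would invoke \cref{chernoff:cor} with $c=20$ to obtain
\begin{equation*}
  \pr{Z\le d_0}\le \exp\!\left(-\mu(1-e^{-20}-20\alpha)\right)=\exp\!\left(-(1-21e^{-20}+o(1))\ln n\right).
\end{equation*}
Since $1-21e^{-20}>0.99$, the right-hand side is at most $n^{-0.9}$ for all sufficiently large $n$, which is the desired conclusion.

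The entire argument is essentially one application of a Chernoff bound; the only subtle point is that the crude Gaussian form $\exp(-a^2/(2\mu))$ would give only about $n^{-(1-e^{-20})^2/2}\approx n^{-1/2}$, because $d_0$ is a constant fraction of the mean $\mu$. Using the sharper $\ell_c$-based form of \cref{chernoff:cor} with $c$ taken large (so that both $e^{-c}$ and $c\alpha$ are small) is what produces an exponent close to $1$ and thus comfortably below $-0.9$. The slack obtained this way also explains why the concrete choice $\delta_0=e^{-20}$ is not tight; any sufficiently small positive constant would have worked.
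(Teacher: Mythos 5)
Your proof is correct and takes essentially the same approach as the paper: reduce to a binomial lower-tail bound and apply \cref{chernoff:cor} with a large $c$ so that $1-e^{-c}-c\delta_0$ stays above $0.9$. The only cosmetic difference is that the paper observes $d_{\hat G}(v)\sim\bin(2m,1/n)$ exactly (each drawn directed pair gives two independent Bernoulli$(1/n)$ trials), whereas you pass through the slightly looser stochastic domination $d_{\hat G}(v)\ge Z\sim\bin(m,2/n-1/n^2)$; both yield a mean $\sim\ln n$ and the same exponent up to $o(1)$.
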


\begin{proof}
  Noting that $d_{\hat{G}}(v)$ is distributed binomially with $2m$ trials and
  success probability $1/n$, we invoke \cref{chernoff:cor} with $c=10$ to obtain
  \begin{equation*}
    \pr{d_{\hat{G}}(v)<d_0}
    \le \exp\left(-\ln{n} \left(1-e^{-10}-10\delta_0\right)(1+o(1)) \right)
    \le n^{-0.9}.\qedhere
  \end{equation*}
\end{proof}

An application of Markov's inequality (since $t_-\sim n\ln{n}$) gives the
following:
\begin{corollary}\label{small:small}
  With high probability, $|\Small|\le n^{0.2}$.
\end{corollary}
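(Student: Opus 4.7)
The proof will be a direct application of the first moment method, using Lemma \ref{smalldeg} as the per-vertex input. Since $\odd{\Gamma_-}$ is distributed as $\hat{G}(n, m)$ with $m = \lceil t_-/2 \rceil$ and $t_-^{(k)} = n(\ln n + (k-1)\ln\ln n - \ln\ln\ln n) \sim n\ln n$ (for any fixed $k$), the hypothesis $m \sim n\ln n / 2$ required by Lemma \ref{smalldeg} is satisfied, so for each fixed vertex $v \in [n]$,
\begin{equation*}
  \pr{v \in \Small} = \pr{d_{\odd{\Gamma_-}}(v) < d_0} \le n^{-0.9}.
\end{equation*}

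Summing over all $n$ vertices and using linearity of expectation gives
\begin{equation*}
  \E{|\Small|} = \sum_{v \in [n]} \pr{v \in \Small} \le n \cdot n^{-0.9} = n^{0.1}.
\end{equation*}
Markov's inequality then yields
\begin{equation*}
  \pr{|\Small| \ge n^{0.2}} \le \frac{\E{|\Small|}}{n^{0.2}} \le \frac{n^{0.1}}{n^{0.2}} = n^{-0.1} = o(1),
\end{equation*}
which is the desired bound.

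There is essentially no obstacle in this proof: the per-vertex tail bound has already been established in Lemma \ref{smalldeg}, and the only thing to verify is that the parameter $m$ coming from $t_-^{(k)}/2$ is asymptotically $n\ln n / 2$, which is immediate from the definition of $t_-^{(k)}$ since $(k-1)\ln\ln n - \ln\ln\ln n = o(\ln n)$ for fixed $k$. The exponents $0.9$ and $0.2$ are chosen so that the expectation, $n^{0.1}$, is polynomially smaller than the threshold $n^{0.2}$, leaving a polynomial safety margin in the Markov step.
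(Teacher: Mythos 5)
Your proof is correct and takes exactly the approach indicated by the paper, which simply notes that the corollary follows from Lemma \ref{smalldeg} by Markov's inequality (since $t_- \sim n\ln n$). You have written out the expected-value computation and the Markov step explicitly, which is precisely what the paper leaves to the reader.
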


\begin{lemma}\label{noloops}
  With high probability, no vertex in $\Small$ is incident to a loop or to a
  multiple edge in $\Gamma_+$.
\end{lemma}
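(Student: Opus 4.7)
The plan is to fix a vertex $v$, bound $\pr{v\in\Small \text{ and } v \text{ is incident to a loop or multiple edge in } \Gamma_+}$ by $O(\mathrm{polylog}(n)/n^{1.9})$, and then union-bound over $v\in[n]$. The key structural fact driving the argument is the one noted just before the statement: on $K_n$ each step of the lazy walk is independently a uniformly random directed edge among the $n^2$ ordered pairs (including loops). In particular, the three edge multisets $\odd{\Gamma_-}$, $\odd{\Gamma_+}\setminus\odd{\Gamma_-}$ and $\even{\Gamma_+}$ are mutually independent, and the event $S_v:=\{v\in\Small\}$ depends only on $\odd{\Gamma_-}$.

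To exploit this I would decompose the bad event $B_v$ that $v$ is incident to a loop or to a multi-edge in $\Gamma_+$ according to which of the three independent parts contains the offending edge(s). The ``loop'' part $L_v$ splits into three terms, one per part; the ``multi-edge'' part $M_v$ splits into the six cases indexed by the (unordered) pair of parts containing the two coinciding edges. For each of the pieces that involves \emph{no} edges of $\odd{\Gamma_-}$, the corresponding event is independent of $S_v$, and a first moment over step-index pairs gives probability $O(t_+/n^2)=O(\ln n/n)$ for a loop and $O(t_+^2/n^3)=O(\ln^2 n/n)$ for a multi-edge. Multiplying by $\pr{S_v}\le n^{-0.9}$ from \cref{smalldeg} yields a joint bound of $O(\ln^2 n/n^{1.9})$ for each of these pieces.

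For the remaining pieces, in which one or two edges of $\odd{\Gamma_-}$ are forced to have a specified form, I would condition on precisely those step-edges. Given that a specified step-edge of $\odd{\Gamma_-}$ equals a loop at $v$ or a given edge $\{v,u\}$, the other $m_- - O(1)$ edges of $\odd{\Gamma_-}$ are still i.i.d.\ uniform directed edges, so $d_{\odd{\Gamma_-}}(v)$ equals a constant of size $O(1)$ plus a $\bin(2m_- - O(1),1/n)$ variable, and the Chernoff estimate used in \cref{smalldeg} (whose slack is enormous because $d_0=\lfloor e^{-20}\ln n\rfloor$) still produces a conditional bound of $n^{-0.9}$ for $S_v$. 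The probability of the specified edge configuration at $v$ is $O(1/n^2)$ per specified step-edge, and summing over the $O(t_+)$ or $O(t_+^2)$ choices of step indices, times the $O(n)$ choices of $u$ in the multi-edge case, yields the same $O(\mathrm{polylog}(n)/n^{1.9})$ order. The only real obstacle is keeping the bookkeeping consistent across all seven subcases; once done, the union bound over $v\in[n]$ gives $o(1)$ and the lemma follows.
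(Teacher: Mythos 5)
Your overall strategy — fix $v$, bound $\pr{v\in\Small \text{ and bad edge at } v}$ by splitting on where the offending step(s) lie, decorrelate by conditioning, apply the Chernoff estimate of \cref{smalldeg}, and union bound — is essentially the paper's. The paper does it in one pass: at an index $i$ where $e_i$ is a loop at $v$ (or one of two coinciding edges), it deletes the odd-indexed edges among $e_{i-1},e_i,e_{i+1}$ from $\odd{\Gamma_-}$, observes that the remaining multigraph $\hat G$ is still $\sim t_-/2$ i.i.d.\ uniform directed edges, \emph{independent} of the event at step $i$, and applies \cref{smalldeg} to $d_{\hat G}(v)\le d_{\odd{\Gamma_-}}(v)$. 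Your seven-way decomposition into ``parts'' is morally the same, but considerably more cumbersome.

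There is, however, a genuine error in your setup. The claim that ``each step of the lazy walk is independently a uniformly random directed edge'' is false: consecutive steps share a vertex, so $e_i$ and $e_{i+1}$ are correlated. What is true is that $X_0,X_1,\dots$ are i.i.d.\ uniform on $[n]$, hence the odd steps $e_1,e_3,\dots$ are i.i.d.\ among themselves and separately the even steps are i.i.d.\ among themselves — but the odd block and the even block are \emph{not} independent of each other. (E.g.\ $\pr{e_1=e_2=\{v,v\}}=n^{-3}\ne n^{-4}$.) Consequently the pieces of your decomposition ``involving no edges of $\odd{\Gamma_-}$'' are \emph{not}, in general, independent of $S_v$ when the offending step is even: a loop at even step $i$ forces $X_{i-1}=X_i=v$, which in turn forces the odd edges $e_{i-1}$ and $e_{i+1}$ to be incident to $v$, tying the event to $d_{\odd{\Gamma_-}}(v)$. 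As written, those subcases are not actually covered. The fix is the same conditioning device you already use for the $\odd{\Gamma_-}$ pieces — condition on the offending even step(s), delete the $O(1)$ adjacent odd steps, and run \cref{smalldeg} on the $\sim t_-/2$ remaining i.i.d.\ odd edges (or, alternatively, observe that conditioning can only increase $d_{\odd{\Gamma_-}}(v)$, so the product bound holds by monotonicity rather than independence). This is exactly what the paper's unified ``delete nearby odd edges'' argument accomplishes without any case split.
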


\begin{proof}
  Let $L_v^i$ be the event ``$v$ is incident to a loop in $\Gamma_+$ which is
  the $i$'th step of the random walk''.  Note that we consider loops in
  $\Gamma_+$, which need not be in $\odd{\Gamma_+}$.
   Fix a vertex $v$ and assume it is
  incident to a loop in $\Gamma_+$.  Take $i$ such that the $i$'th step of
  $X_t$, $e_i$, is a loop incident to $v$ (that is, $X_{i-1}=X_i=v$). Let
  $\hat{G}$ be the graph obtained from $\odd{\Gamma_+}$ by removing $e_j$ for
  every odd $i-1\le j\le i+1$.  It is clear then that $\hat{G}$ is distributed
  like $\hat{G}(n,m)$ with $t_+/2-2\le m\le t_+/2-1$ (so $m\sim n\ln{n}/2$),
  and it is independent of the event $L_v^i$.
  Noting that $\pr{L_v^i}=n^{-2}$ and using \cref{smalldeg} we conclude that
  \begin{equation*}
    \pr{L_v^i,v\in\Small}
    \le \pr{L_v^i,d_{\hat{G}}(v)<d_0}
    = \pr{L_v^i}\pr{d_{\hat{G}}(v)<d_0}
    \le n^{-2.9},
  \end{equation*}
  and by applying the union bound over all
  vertices and over all potential times for loops at a vertex we obtain the
  following upper bound for the existence of a vertex from $\Small$ which is
  incident to a loop:
  \begin{equation*}
    \pr{\exists v\in[n],\ i\in [t_+]:\ L^i_v,\ v\in\Small}
    \le n\cdot t_+\cdot n^{-2.9} = o(1).
  \end{equation*}
  Using a similar method, we can show that \whp{} there is no vertex in
  $\Small$ which is incident to a multiple edge in $\Gamma_+$, and this
  completes the proof.
\end{proof}

\begin{lemma}\label{uvo}
  With high probability, for every pair of disjoint vertex subsets
  $U,W\subseteq [n]$ of size $|U|=|W|=n/\ln^{1/2}{n}$, $\odd{\Gamma_-}$ has at
  least $0.5n$ edges between $U$ and $W$.
\end{lemma}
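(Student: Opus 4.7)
The plan is a routine first-moment-plus-Chernoff argument followed by a union bound. The main work is just checking that the expected number of edges between two fixed sets of size $s=n/\ln^{1/2}n$ is comfortably larger than the entropy cost $2\ln\binom{n}{s}$ of specifying the two sets.

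First I would fix two disjoint sets $U,W\subseteq[n]$ with $|U|=|W|=s=n/\ln^{1/2}n$. Since $\odd{\Gamma_-}\sim\hat{G}(n,m)$ with $m=\lceil t_-/2\rceil\sim n\ln n/2$, the number $N$ of directed edges (with loops) drawn between $U$ and $W$ is distributed as $\bin(m,q)$ with
\begin{equation*}
  q=\frac{2|U||W|}{n^2}=\frac{2}{\ln n},
\end{equation*}
so $\mu:=\E(N)=mq\sim n$. The number of edges of $\odd{\Gamma_-}$ between $U$ and $W$ is exactly $N$ (the orientation is forgotten but no edge is double-counted, since $U\cap W=\emptyset$ rules out loops contributing here).

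Next I would apply the lower-tail Chernoff bound \eqref{chernoff_ineq_low} to get
\begin{equation*}
  \pr{N\le n/2}\le \exp\!\left(-\tfrac{1}{8}\mu(1+o(1))\right)\le \exp(-n/9)
\end{equation*}
for $n$ large. Finally, the number of ordered pairs $(U,W)$ of disjoint $s$-subsets is at most
\begin{equation*}
  \binom{n}{s}^{2}\le\left(\frac{en}{s}\right)^{2s}
  =\exp\!\left(\frac{2n}{\ln^{1/2}n}\cdot\ln(e\ln^{1/2}n)\right)
  =\exp\!\left(O\!\left(\frac{n\ln\ln n}{\ln^{1/2}n}\right)\right)=e^{o(n)}.
\end{equation*}
Therefore the union bound over all such pairs yields
\begin{equation*}
  \pr{\exists\,U,W:\ |E_{\odd{\Gamma_-}}(U,W)|<n/2}
  \le e^{o(n)}\cdot e^{-n/9}=o(1),
\end{equation*}
which is the claim.

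There is no real obstacle here: the point is simply that the expected number of edges between two sets of size $n/\ln^{1/2}n$ in a random multigraph with $\sim n\ln n/2$ edges is of order $n$, and a Chernoff bound gives a failure probability $e^{-\Theta(n)}$ that easily dominates the $e^{o(n)}$ entropy of choosing the two sets. The only minor care-point is to note that $\odd{\Gamma_-}$ is a \emph{multi}graph obtained from $m$ i.i.d.\ uniformly chosen directed edges of $K_n$, so the count $N$ is genuinely binomial with the stated parameters — not hypergeometric or otherwise correlated — which is exactly the setting in which the Chernoff bound \eqref{chernoff_ineq_low} applies directly.
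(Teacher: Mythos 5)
Your proof is correct and follows essentially the same route as the paper: model $\odd{\Gamma_-}$ as $\hat G(n,m)$ with $m=\lceil t_-/2\rceil\sim n\ln n/2$, observe that the number of $U$--$W$ steps is binomial with success probability $\asymp 2/\ln n$ and mean $\sim n$, apply the lower-tail Chernoff bound to get failure probability $e^{-\Theta(n)}$, and union bound over the $e^{o(n)}$ choices of $(U,W)$. The only cosmetic difference is that you compute the per-step success probability as $2|U||W|/n^2$ directly from the directed-edge model, whereas the paper writes it (equivalently, up to lower-order terms) as $|U||W|\binom{n+1}{2}^{-1}$ and then just bounds it below by $1.9/\ln n$; both yield the same conclusion.
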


\begin{proof}
  We note that $\left|E_{\odd{\Gamma_-}}(U,W)\right|$ is distributed binomially
  with $\ceil{t_-/2}$ trials and success probability
  $p=\frac{n^2}{\ln{n}}\binom{n+1}{2}^{-1}$.
  As $p>1.9/\ln{n}$, using the Chernoff bounds we have that
  \begin{align*}
    \pr{\left|E_{\odd{\Gamma_-}}(U,W)\right| < 0.5n}
    &\le \pr{\bin(\ceil{t_-/2},1.9/\ln{n}) < 0.5n}\\
    &\le \pr{\bin(n\ln{n}/1.9,1.9/\ln{n}) \le n - 0.5n}
    \le e^{-0.1n},
  \end{align*}
  thus by the union bound
  \begin{align*}
    \pr{\exists U,W: \left|E_{\odd{\Gamma_-}}(U,W)\right| < 0.5n}
    &\le \binom{n}{n/\ln^{1/2}{n}}^2 e^{-0.1n}\\
    &\le \left(e^2\ln{n}\right)^{n/\ln^{1/2}{n}}e^{-0.1n}\\
    &\le \exp\left( \frac{n}{\ln^{1/2}{n}}
    \left(2+\lln{n}\right)-0.1n \right)
    = o(1).\qedhere
  \end{align*}
\end{proof}

\subsubsection{Extending the trace}
Define
\begin{equation*}
  \Gamma_* = \odd{\Gamma_-} + \left\{e_i\mid 1\le i\le \tau_C^k+1,
  e_i\cap\Small\ne\varnothing\right\}.
\end{equation*}

\begin{lemma}\label{mindeg}
  With high probability, $\delta(\Gamma_*)\ge 2k$.
\end{lemma}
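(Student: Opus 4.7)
The plan is to split on whether a vertex lies in $\Small$, exploiting the preparatory facts that edges of $\Gamma_+$ have bounded multiplicity (\cref{maxmul}) and that $\Small$ vertices carry no loops or parallel edges (\cref{noloops}). By \cref{classical}, \whp{} $\tau_C^k+1\le t_+^{(k)}$, hence $\Gamma_*\subseteq\Gamma_+$ and those conclusions transfer to $\Gamma_*$.

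For $v\notin\Small$, the definition of $\Small$ gives $d_{\odd{\Gamma_-}}(v)\ge d_0=\floor{\delta_0\ln n}$; since $\odd{\Gamma_-}\subseteq\Gamma_*$, the same multigraph-degree bound persists. \cref{maxmul} bounds edge multiplicities in $\Gamma_*$ by $4$, and a straightforward union bound shows that \whp{} no vertex is incident to more than $O(1)$ loops in $\Gamma_+$ (each step is a loop at any fixed vertex with probability $1/n^2$). Combining these, $d'_{\Gamma_*}(v)\ge (d_0-O(1))/4\gg 2k$ for large $n$.

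For $v\in\Small$, first observe that \whp{} the starting vertex $X_0$ is not in $\Small$: the same Chernoff computation as in \cref{smalldeg} (together with the fact that $X_0,X_1,\ldots$ are i.i.d.\ uniform on $K_n$) gives $\pr{X_0\in\Small}\le n^{-0.9}=o(1)$. Assume then $v\in\Small\smallsetminus\{X_0\}$. The definition of $\tau_C^k$ supplies $k$ visit-times $1\le i_1<\cdots<i_k\le\tau_C^k$ to $v$; each contributes an entering step $e_{i_j}$ and an exiting step $e_{i_j+1}$, both with index in $[1,\tau_C^k+1]$ and incident to $v\in\Small$, hence both appearing in $\Gamma_*$. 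The absence of loops at $v$ (\cref{noloops}) forces $i_{j+1}\ge i_j+2$, so the $2k$ walk steps listed above occupy $2k$ distinct positions; the absence of parallel edges at $v$ then shows they correspond to $2k$ distinct simple edges. Thus $d'_{\Gamma_*}(v)=d_{\Gamma_*}(v)\ge 2k$.

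Combining the two cases yields $\delta(\Gamma_*)\ge 2k$ \whp. The main substance is already absorbed into the preparatory lemmas \cref{maxmul,smalldeg,noloops}; once edge multiplicities are bounded and $\Small$-vertices are known to carry neither loops nor parallel edges, the statement reduces to a clean deterministic count of walk steps. The one delicate point is the need to exclude the starting vertex from $\Small$, since its time-$0$ visit provides only one (rather than two) incident walk steps, shaving exactly one off the count; this is handled cleanly by the $\pr{X_0\in\Small}=o(1)$ observation above.
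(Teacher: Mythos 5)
Your proof is correct and follows essentially the same route as the paper's: split on membership in $\Small$, use \cref{maxmul} to deflate the multi-degree of non-$\Small$ vertices, use \cref{noloops} together with the definition of $\Gamma_*$ to get $2k$ distinct simple edges at each $\Small$ vertex, and exclude $X_0$ from $\Small$ via \cref{smalldeg}. The only cosmetic difference is that you argue separately that each vertex carries only $O(1)$ loops, whereas the paper extracts this directly from \cref{maxmul} (a loop at $v$ is an edge $\{v,v\}$, so at most $4$ of them, giving the $-8$ in the paper's $(d_0-8)/4$ bound).
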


\begin{proof}
  Let $v$ be a vertex. If $v\notin\Small$ then $d(v)\ge d_0$ hence \whp{}
  $d'(v)\ge (d_0-8)/4\ge 2k$ (according to \cref{maxmul}). On the other
  hand, if $v\in\Small$, and is not the first vertex of the random walk, then
  it was entered and exited at least $k$ times in the first $\tau_C^k+1$
  steps of the random walk.  By the definition of $\Gamma_*$, all of these
  entries and exits are in $E(\Gamma_*)$.  Since \whp{} none of the vertices in
  $\Small$ is incident to loops or multiple edges (according to
  \cref{noloops}), the simple degree of each such vertex is at least $2k$.

  Noting that \whp{} the first vertex of the random walk is not in $\Small$
  (according to \cref{smalldeg}), we obtain the claim.
\end{proof}

We note that by deleting the edge $e_{\tau_{C}^k+1}$ from $\Gamma_*$ its
minimum degree cannot drop by more than one, so \cref{kn_conn_1}
follows from \cref{mindeg}.

\begin{lemma}\label{maxdeg}
  With high probability, $\Delta\left(\Gamma_*\right) \le 6\ln{n}$.
\end{lemma}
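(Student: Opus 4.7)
The plan is to bound $\Delta(\Gamma_*)$ by the maximum number of visits of the walk to any single vertex, which on $K_n$ reduces to a routine Chernoff estimate. Two observations drive the argument. First, every edge of $\Gamma_*$ is either an odd step at time $\le t_-$ or a step at time $\le \tau_C^k+1$, so $\Gamma_* \subseteq \Gamma_{\tau_C^k+1}$; by \cref{classical} we have $\tau_C^k+1 \le t_+$ \whp{}, hence $\Gamma_* \subseteq \Gamma_{t_+}$ \whp{}. Second, for any vertex $v$, each visit to $v$ is responsible for at most one entry edge and one exit edge, so $d'_{\Gamma_{t_+}}(v) \le 2\mu_{t_+}(v)$. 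It therefore suffices to show that \whp{} $\mu_{t_+}(v) \le 3\ln n$ for every $v \in [n]$.

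To estimate $\mu_{t_+}(v)$, I would exploit the crucial feature of the lazy walk on $K_n$: $\pr{X_i = v \mid X_{i-1} = u} = 1/n$ for every $u$ (whether $u=v$ and the walk stays, with probability $1/n$, or $u\neq v$ and the walk moves to $v$, with probability $(1-1/n)\cdot\frac{1}{n-1} = 1/n$). Consequently $X_0, X_1, \ldots, X_{t_+}$ are i.i.d.\ uniform on $[n]$, and $\mu_{t_+}(v) \sim \bin(t_+ + 1, 1/n)$ with mean $\mu \sim \ln n$.

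Next I would invoke the Chernoff bound from \cref{chernoff} with $(1+\delta)\mu = 3\ln n$, so $\delta \to 2$, yielding
\begin{equation*}
  \pr{\mu_{t_+}(v) > 3\ln n} \le \exp\!\left(-\mu\,\varphi(\delta)\right) = \exp\!\left(-\ln n \cdot (3\ln 3 - 2 + o(1))\right) = o(n^{-1}),
\end{equation*}
since $3\ln 3 - 2 > 1$. A union bound over the $n$ vertices then gives $\max_{v} \mu_{t_+}(v) \le 3\ln n$ \whp{}, and combining this with the two observations above yields $\Delta(\Gamma_*) \le \Delta(\Gamma_{t_+}) \le 6\ln n$ \whp{}.

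There is no real obstacle in this proof; the only thing to be careful about is the numerical constant in the Chernoff estimate, which must be small enough to beat the union bound factor of $n$. The choice of $3\ln n$ as the threshold gives a tail exponent of $3\ln 3 - 2 \approx 1.296 > 1$, leaving comfortable slack, and the corresponding bound $2\cdot 3\ln n = 6\ln n$ matches the claim exactly.
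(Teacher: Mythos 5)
Your proof is correct, and it takes a genuinely different (though comparably short) route from the paper's. The paper bounds $d'_{\Gamma_*}(v)\le d_{\odd{\Gamma_-}}(v)+d_{\even{\Gamma_+}}(v)$ and applies a Chernoff estimate to each of the two binomial degrees, exploiting the odd/even decomposition that organizes the whole of \cref{sec:hittingtime}. You instead observe that $\Gamma_*\subseteq\Gamma_{\tau_C^k+1}\subseteq\Gamma_{t_+}$ \whp{} (by \cref{classical}), bound the simple degree of any vertex by twice its visit count, and then use the key feature of the $\frac1n$-lazy walk on $K_n$ (already noted at the start of \cref{sec:hittingtime}) that $X_0,X_1,\ldots$ are i.i.d.\ uniform on $[n]$, so $\mu_{t_+}(v)\sim\bin(t_++1,1/n)$. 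The Chernoff calculation with threshold $3\ln n$ and exponent $\varphi(2)=3\ln 3-2>1$ then closes the argument exactly as the paper's $c=-1$ invocation of \cref{chernoff:cor} does; both comfortably beat the union bound over $n$ vertices. The two proofs buy the same thing, but yours leans more directly on the independence structure of the lazy walk rather than on the odd/even splitting of the trace, which is perhaps a touch cleaner in isolation, while the paper's version stays within the family of degree estimates it repeatedly reuses. One small point worth making explicit in a write-up is that $d_{\Gamma_{t_+}}(v)\le 2\mu_{t_+}(v)$ holds with the paper's convention that a loop adds $2$ to the degree, since $d_{\Gamma_{t_+}}(v)=\sum_{i=1}^{t_+}\bigl(\mathbf 1[X_{i-1}=v]+\mathbf 1[X_i=v]\bigr)\le 2\mu_{t_+}(v)$, and $d'\le d$; but you clearly had this in mind.
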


\begin{proof}
  Fix a vertex $v$.  Noting that $d_{\odd{\Gamma_-}}(v)$ is binomially
  distributed with mean $t_-/n\sim\ln{n}$, we invoke \cref{chernoff:cor} with
  $c=-1$ to obtain
  \begin{equation*}
    \pr{d_{\odd{\Gamma_-}}(v) > 3\ln{n}}
    \le \exp\left(-\ln{n} \left(1-e+3\right)(1+o(1)) \right)
    = O\left(n^{-1.2}\right).
  \end{equation*}
  Similarly one can derive
  $\pr{d_{\even{\Gamma_+}}(v)>3\ln{n}} = O\left(n^{-1.2}\right)$.  Since
  $d'_{\Gamma_*}(v)\le d_{\odd{\Gamma_-}}(v) + d_{\even{\Gamma_+}}(v)$ we
  have that $\pr{d_{\Gamma_*}'(v) > 6\ln{n}} = O\left(n^{-1.2}\right)$.  The
  union bound over all vertices gives that
  $\pr{\Delta\left(\Gamma_*\right) > 6\ln{n}} = o(1)$, as we have wished to
  show.
\end{proof}

\begin{lemma}\label{short_paths}
  Fix $\ell \ge 1$. With high probability there is no path of length between
  $1$ and $\ell$ in $\Gamma_*$ such that both of its (possibly identical)
  endpoints lie in $\Small$.
\end{lemma}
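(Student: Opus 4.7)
The plan is a first-moment argument showing that the expected number of ``bad'' paths---paths of length $j\in\{1,\ldots,\ell\}$ in $\Gamma_*$ with both endpoints in $\Small$---is $o(1)$. Since \cref{classical} yields $\tau_C^k+1\le t_+^{(k)}$ whp, we may work in $\Gamma_+$ in place of $\Gamma_*$. Taking a shortest bad path, we may further assume it has no internal $\Small$-vertex, so all of $v_1,\ldots,v_{j-1}$ are outside $\Small$. The degenerate short cases $j=1$ or $j=2$ with $v_0=v_j$ (loops and double edges at $\Small$-vertices) are immediately excluded by \cref{noloops}.

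Fix such a potential path $P=(v_0,\ldots,v_j)$. The first step is to bound $\pr{P\subset\Gamma_+}$. Writing $\Gamma_+=\odd{\Gamma_+}\cup\even{\Gamma_+}$ with the two parts independent, each distributed as $\hat{G}(n,m')$ for some $m'\sim n\ln{n}/2$, and using that for any $a$ distinct edges the probability all of them appear in $\hat{G}(n,m)$ is at most $(2m/n^2)^a$ (by a union bound over which draws realize each target edge), I split $P$'s edges between odd and even parts in $2^j$ ways to obtain $\pr{P\subset\Gamma_+}\le(C\ln{n}/n)^j$ for some absolute constant $C>0$.

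The second step is to bound $\pr{v_0,v_j\in\Small\midd P\subset\Gamma_+}$. I condition further on a specific allocation of $P$'s edges to specific walk-draws; the remaining draws of $\odd{\Gamma_+}$ remain i.i.d.\ uniform. The degree of $v_0$ in $\odd{\Gamma_-}$ is then a deterministic contribution of at most $2$ from the conditioned edges, plus a $\bin(2m_-,1/n)$-like random variable from the remaining draws, and likewise for $v_j$. The Chernoff estimate of \cref{smalldeg} then shows each endpoint lies in $\Small$ with conditional probability at most $n^{-0.9+o(1)}$, and since the two degree-binomials depend on disjoint subsets of the remaining i.i.d.\ draws, they are approximately independent, yielding $\pr{v_0,v_j\in\Small\midd P\subset\Gamma_+}\le n^{-1.8+o(1)}$.

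Combining and union-bounding over the at most $n^{j+1}$ configurations with $v_0\ne v_j$ gives
\[
  n^{j+1}\cdot n^{-1.8+o(1)}\cdot(C\ln{n}/n)^j \;=\; n^{-0.8+o(1)}(C\ln{n})^j \;=\; o(1),
\]
and the cyclic case $v_0=v_j$ is analogous with at most $n^j$ configurations and a single $n^{-0.9+o(1)}$ factor, giving $n^{-0.9+o(1)}(C\ln{n})^j=o(1)$. A final sum over the finitely many $j\in\{1,\ldots,\ell\}$ completes the argument. The main obstacle is the probabilistic decomposition of the second step: ensuring that conditioning on $P\subset\Gamma_+$ perturbs the $\Small$-probabilities of $v_0,v_j$ only by a $1+o(1)$ factor. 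This is resolved by exploiting the independence of the odd and even parts of the lazy walk together with the edge-independence inside each $\hat{G}(n,m)$, which confines the interaction to an $O(1)$ shift of the endpoint degrees.
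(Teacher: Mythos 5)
Your outline shares the paper's overall anatomy---a first-moment union bound, followed by a conditioning step that exploits disjointness of $X$-indices to control the $\Small$-membership of the endpoints---but Step 1 contains a genuine gap. The traces $\odd{\Gamma_+}$ and $\even{\Gamma_+}$ are \emph{not} mutually independent. Already for $n=2$, $t=2$: $\odd{\Gamma_2}=\left\{\left\{X_0,X_1\right\}\right\}$ and $\even{\Gamma_2}=\left\{\left\{X_1,X_2\right\}\right\}$ share $X_1$, and $\pr{\odd{\Gamma_2}=\{1,1\},\,\even{\Gamma_2}=\{2,2\}}=0$ while each marginal is $1/4$. (The preamble of the paper's Section 5 does loosely assert this independence, but the actual proof of the lemma is careful not to invoke it.) Concretely, the bound you want, $\pr{E_1\subseteq\odd{\Gamma_+},\ E_2\subseteq\even{\Gamma_+}}\le (2m/n^2)^{j}$, fails for allocations in which an odd draw and an even draw at consecutive times $s,s+1$ realize two adjacent path edges: sharing $X_s$ makes that pair cost only $n^{-3}$ rather than $(2/n^2)^2$. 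The paper resolves exactly this issue by stratifying the time-set $T$ by its number of runs $r(T)$: for such $T$, $\pr{A_T}\le n^{-(\ell+r)}$ and there are only $O(t_+^{\,r})$ choices, so each $r$ contributes $O\left((\ln n)^{r}/n^{\ell}\right)$ and the total remains $O\left((\ln n/n)^{\ell}\right)$. Your final bound $\pr{P\subset\Gamma_+}\le (C\ln n/n)^{j}$ is in fact true, but it is not a consequence of odd/even independence; recovering it requires precisely this run-counting (or an equivalent accounting of shared $X$-indices).

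Step 2 is the right idea but is stated too loosely where the paper is precise. The paper defines $I_T$ to be the odd times at distance more than $1$ from all allocated times, so that the endpoint degrees restricted to $I_T$ are jointly independent of the allocation event, and it handles the single draw that could affect both endpoints (the edge $\left\{v_0,v_\ell\right\}$) by conditioning on $\left\{v_0,v_\ell\right\}\notin I_T$, at the cost of a $(1+o(1))$ factor. Your ``$O(1)$ shift'' and ``disjoint subsets of remaining i.i.d.\ draws'' gesture at both adjustments, but you should make explicit (i) the exclusion of odd times adjacent to allocated \emph{even} times, whose draws are biased by the conditioning, and (ii) the one potentially shared draw $\left\{v_0,v_\ell\right\}$, which prevents the two endpoint-degree variables from being functions of exactly disjoint draws.
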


\begin{proof}
  For a set $T\subseteq[t_+]$ let $r(T)$ be the minimum number of integer
  intervals whose union is the set of elements from $T$.  In symbols,
  \begin{equation*}
    r(T) = \left| \left\{
    1\le i\le t_+\mid i\in T\land i+1\notin T
    \right\} \right|.
  \end{equation*}
  Fix $\ell\ge 1$ and $P=(v_0,\ldots,v_\ell)$, a path of length $\ell$.
  Suppose first that $v_0\ne v_\ell$.  Let $A$ be the event $P\subseteq
  E\left(\Gamma_+\right)$.  For every set
  $T=\{s_1,\ldots,s_\ell\}\subseteq[t_+]$ with $s_1<s_2<\ldots<s_\ell$,
  let $A_T$ be the event ``$\forall j\in[\ell]$,
  $e_{s_j} = \left\{v_{j-1},v_j\right\}$''.  We have that
  \begin{align*}
    \pr{A,\ v_0,v_\ell\in\Small}
    &\le \sum_{T\in\binom{[t_+]}{\ell}} \pr{A_T,\ v_0,v_\ell\in\Small}\\
    &= \sum_{r=1}^\ell
    \sum_{\substack{T\in\binom{[t_+]}{\ell} \\r(T)=r}}
    \pr{A_T,\ v_0,v_\ell\in\Small}.\\
  \end{align*}
  For every set $T\in\binom{[t_+]}{\ell}$, let
  \begin{equation*}
    I_T = \left\{i\in[t_-] \mid
      i\text{ is odd},\ \nexists s\in T:\ |i-s|\le 1\right\},
  \end{equation*}
  and for a vertex $v\in[n]$, let $d_{I_T}(v)$ be the degree of $v$ in the
  graph formed by the edges $\left\{e_i\mid i\in I_T\right\}$.  Let $D_T(v)$ be
  the event ``$d_{I_T}(v)\le d_0$''.  It follows from the definition of $I_T$
  that $D_T(v_0)$ and $D_T(v_\ell)$ are independent of the event $A_T$.
  Moreover, if $v\in\Small$ then $D_T(v)$ (since
  $d_{I_T}(v) \le d_{\odd{\Gamma_-}}(v)$), and
  as there is exactly one edge of $K_n$ connecting $v_0$ with $v_\ell$,
  conditioning on the event $D_T(v_0)$ cannot increase the probability of the
  event $D_T(v_\ell)$ by much:
  \begin{align*}
    \pr{D_T(v_0),D_T(v_\ell)}
    &\le \pr{D_T(v_0),D_T(v_\ell)\mid \left\{v_0,v_\ell\right\}\notin I_T}\\
    &= \pr{D_T(v_0)\mid \left\{v_0,v_\ell\right\}\notin I_T}
        \pr{D_T(v_\ell)\mid \left\{v_0,v_\ell\right\}\notin I_T}\\
    &\le \pr{D_T(v_0)}\pr{D_T(v_\ell)}
    \cdot\frac{1}{(\pr{\left\{v_0,v_\ell\right\}\notin I_T})^2}\\
    &= \pr{D_T(v_0)}\pr{D_T(v_\ell)}(1+o(1))
    \le n^{-1.7},
  \end{align*}
  here we have used \cref{smalldeg}, and the fact that
  $\left|I_T\right|\sim n\ln{n}/2$. Thus, for a fixed $T$,
  \begin{align*}
    \pr{A_T,\ v_0,v_\ell\in\Small}
    &\le \pr{A_T,\ D_T(v_0),\ D_T(v_\ell)}\\
    &= \pr{A_T} \pr{D_T(v_0),D_T(v_\ell)}\\
    &\le \pr{A_T} \cdot n^{-1.7}.
  \end{align*}
  Similarly, if $v_0=v_\ell$ we obtain
  $\pr{A_T,v_0\in\Small}\le\pr{A_T}\cdot n^{-0.9}$.
  Now, given $T$ with $r(T)=r$ ($1\le r\le \ell$), the probability of $A_T$ is
  at most $n^{-(\ell+r)}$.  It may be $0$, in case $T$ is not feasible, and
  otherwise there are exactly $\ell+r$ times where the walk is forced to be
  at a given vertex (the walk has to start each of the $r$ intervals at a
  given vertex, and to walk according to the intervals $\ell$ steps in
  total), and the probability for each such restriction is $1/n$.  The number
  of $T$'s for which $r(T)=r$ is $O\left(\left(t_+\right)^r\right)$ (choose
  $r$ points from $\left[t_+\right]$ to be the starting points of the $r$
  intervals; then for every $j\in[\ell]$ there are at most $r\ell$ options
  for the $j$'th element of $T$).  Noting that the number of paths of length
  $\ell$ is no larger than $n^{\ell+1}$ if $v_0\ne v_\ell$, or $n^\ell$ if
  $v_0=v_\ell$, the union bound gives
  \begin{align*}
    &\pr{\exists P=(v_0,\ldots,v_\ell):\ A,\ v_0,v_\ell\in\Small}\\
    &\le n^{\ell+1}\sum_{r=1}^\ell
    \frac{O\left(\left(t_+\right)^r\right)}{n^{\ell+r}}
    \cdot n^{-1.7}
    + n^\ell\sum_{r=1}^\ell
    \frac{O\left(\left(t_+\right)^r\right)}{n^{\ell+r}}
    \cdot n^{-0.9}\\
    &\le n^{-0.7}
    \sum_{r=1}^\ell O\left(\ln^r{n}\right) = o(1).\qedhere
  \end{align*}
\end{proof}

\begin{lemma}\label{spans_notmany}
  With high probability, every vertex set $U$ with $|U|\le n/\ln^{1/2}{n}$
  spans at most $2|U|\cdot \ln^{3/4}{n}$ edges (counting multiple edges and
  loops) in $\Gamma_*$.
\end{lemma}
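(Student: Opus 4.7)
The plan is to split $U = U_1\sqcup U_2$ with $U_1 = U\cap\Small$ and $U_2 = U\smallsetminus\Small$, bound $e_{\Gamma_*}(U_1)$, $e_{\Gamma_*}(U_1,U_2)$ and $e_{\Gamma_*}(U_2)$ separately, and sum. The bulk is handled by a routine Chernoff bound on $\odd{\Gamma_-}$; the structural lemmas \cref{noloops} and \cref{short_paths} control everything that touches $\Small$.

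First, I will establish an auxiliary claim: with high probability, every $U$ with $|U|=u\le n/\ln^{1/2}n$ satisfies $e_{\odd{\Gamma_-}}(U)\le u\ln^{3/4}n$ (edges counted with multiplicity and loops). Since $\odd{\Gamma_-}\sim\hat G(n,\ceil{t_-/2})$ is a sequence of i.i.d.\ uniform directed edges, this count is distributed as $\bin(\ceil{t_-/2},u^2/n^2)$, with mean $\sim u^2\ln n/(2n)$. By \cref{chernoff:trivial},
\begin{equation*}
  \pr{e_{\odd{\Gamma_-}}(U)\ge u\ln^{3/4}n}\le\left(\frac{eu\ln^{1/4}n}{2n}\right)^{u\ln^{3/4}n}.
\end{equation*}
Multiplying by $\binom{n}{u}\le(en/u)^u$ and using $u\le n/\ln^{1/2}n$ (which forces the Chernoff base below $e/(2\ln^{1/4}n)$), the logarithm of the union bound becomes $-\Theta(u\ln^{3/4}n\cdot\ln\ln n)$, dominating the positive contribution $u\ln(en/u)$ uniformly in $u$; summing over $u$ yields $o(1)$.

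Next, I will condition on this auxiliary claim together with the high-probability events of \cref{noloops} and \cref{short_paths} applied with $\ell=2$. Then $e_{\Gamma_*}(U_1)=0$: \cref{short_paths} with $\ell=1$ forbids any edge of $\Gamma_*$, loops included, with both endpoints in $\Small$. Next, $e_{\Gamma_*}(U_1,U_2)\le|U_2|$: each such edge is simple by \cref{noloops}, and were some $w\in U_2$ adjacent in $\Gamma_*$ to two distinct $v_1,v_2\in U_1\subseteq\Small$, the path $v_1 - w - v_2$ would contradict \cref{short_paths} with $\ell=2$; hence every $w\in U_2$ has at most one neighbour in $U_1$. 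Finally, $e_{\Gamma_*}(U_2)=e_{\odd{\Gamma_-}}(U_2)$, because every edge of $\Gamma_*\smallsetminus\odd{\Gamma_-}$ is incident to $\Small$ by the definition of $\Gamma_*$, while $U_2\cap\Small=\varnothing$; the auxiliary claim then bounds this by $|U_2|\ln^{3/4}n$.

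Adding the three pieces gives $e_{\Gamma_*}(U)\le 0+|U_2|+|U_2|\ln^{3/4}n\le 2|U|\ln^{3/4}n$ for $n$ sufficiently large, as required. The main obstacle is the margin in the Chernoff calculation: near $u = n/\ln^{1/2}n$ the target $u\ln^{3/4}n$ exceeds the mean by only a factor of $\Theta(\ln^{1/4}n)$, so the cushion in the Chernoff exponent is only $\Theta(u\ln^{3/4}n\cdot\ln\ln n)$, which is slightly larger than the union-bound cost $\ln\binom{n}{u}=\Theta(u\ln\ln n)$; the factor of $\ln^{3/4}n$ gained from raising to the $k=u\ln^{3/4}n$-th power is what makes the inequality close.
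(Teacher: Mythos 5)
Your proof is correct, but it takes a genuinely different route from the paper's. The paper's argument is purely first-moment: it bounds $\odd{e}(U)$ and $\even{e}(U)$ --- the edges spanned by $U$ in $\odd{\Gamma_+}$ and $\even{\Gamma_+}$ --- each by $u\ln^{3/4}n$ via \cref{chernoff:trivial} and a union bound over $U$ (splitting the sum at $u=\ln n$), and then invokes the whp containment $\Gamma_*\subseteq\Gamma_+$ (which follows from $\tau_C^k+1\le t_+^{(k)}$ whp) to conclude. You instead decompose $U$ as $U_1\sqcup U_2$ along $\Small$, dispose of all edges touching $U_1$ deterministically via the structural lemmas (\cref{noloops} plus \cref{short_paths} with $\ell=2$, which give $e_{\Gamma_*}(U_1)=0$ and $e_{\Gamma_*}(U_1,U_2)\le|U_2|$), and observe that $e_{\Gamma_*}(U_2)=e_{\odd{\Gamma_-}}(U_2)$ since the extension edges all touch $\Small$; only that last quantity needs a Chernoff bound. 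Both calculations are essentially the same and both close with the same $\Theta(u\ln^{3/4}n\ln\ln n)$ cushion at the top of the $u$ range (your diagnosis of where the margin is tight is accurate, though for small $u$ the cushion is larger, of order $u\ln^{3/4}n\ln n$, not merely $u\ln^{3/4}n\ln\ln n$). The paper's route is shorter and has fewer dependencies; yours burns \cref{noloops} and \cref{short_paths} for what is arguably a conceptual gain --- it makes explicit that after conditioning on the structural lemmas only the ``core'' graph $\odd{\Gamma_-}$ contributes randomness inside $U\smallsetminus\Small$ --- but no quantitative gain.
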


\begin{proof}
  Fix $U\subseteq[n]$ with $|U|=u\le n/\ln^{1/2}{n}$.  Let $\odd{e}(U)$ and
  $\even{e}(U)$ be the number of edges (including multiple edges and loops)
  spanned by $U$ in $\odd{\Gamma_+}$ and $\even{\Gamma_+}$ respectively.  Note
  that
  $\odd{e}(U)$ is binomially distributed with $\ceil{t_+/2}$ trials and success
  probability $u^2/n^2$.  Thus, using \cref{chernoff:trivial} we have that
  \begin{equation*}
    \pr{\odd{e}(U)>u\ln^{3/4}{n}} \le
    \left(\frac{et_+u^2}{2n^2u\ln^{3/4}{n}}\right)
    ^{u\ln^{3/4}{n}}
    \le \left(\frac{e\ln^{1/4}{n}u}{n}\right)^{u\ln^{3/4}{n}}.
  \end{equation*}

  The union bound over all choices of $U$ yields
  \begin{align*}
    \pr{\exists U,\ |U|\le\frac{n}{\ln^{1/2}{n}},\ \odd{e}(U)\ge|U|\ln^{3/4}{n}}
    &\le \sum_{u=1}^{n/\ln^{1/2}{n}}\binom{n}{u}
    \left(\frac{e\ln^{1/4}{n}u}{n}\right)^{u\ln^{3/4}{n}}\\
    &\le \sum_{u=1}^{n/\ln^{1/2}{n}}
    \left(\frac{en}{u}\cdot
    \left(\frac{e\ln^{1/4}{n}u}{n}\right)^{\ln^{3/4}{n}}\right)^u.
  \end{align*}

  We now split the sum into two:
  \begin{equation*}
    \sum_{u=1}^{\ln{n}} \left(\frac{en}{u} \left(\frac{e\ln^{1/4}{n}u}
    {n}\right)^{\ln^{3/4}{n}}\right)^u
    \le \ln{n} \cdot en\left(\frac{e\ln^{5/4}{n}}{n}\right) ^{\ln^{3/4}{n}}
    =o(1),
  \end{equation*}
  and
  \begin{align*}
    \sum_{u=\ln{n}}^{n/\ln^{1/2}{n}} \left(\frac{en}{u}
    \left(\frac{e\ln^{1/4}{n}u} {n}\right)^{\ln^{3/4}{n}}\right)^u
    &= \sum_{u=\ln{n}}^{n/\ln^{1/2}{n}}
    \left( e \left(\frac{u}{n}\right)^{\ln^{3/4}{n}-1}
    \left(e\ln^{1/4}{n}\right)^{\ln^{3/4}{n}}\right)^u\\
    &\le n\left( e \left(\frac{1}{\ln^{1/2}{n}}\right)^{\ln^{3/4}{n}-1}
    \left(e\ln^{1/4}{n}\right)^{\ln^{3/4}{n}}\right)^{\ln{n}} = o(1).
  \end{align*}

  As the same bound applies for $\even{e}(U)$, the union bound concludes the claim
  (noting that ${\Gamma_*\subseteq \Gamma_+}$).
\end{proof}

\begin{lemma}\label{cross_notmany}
  With high probability, for every pair of disjoint vertex sets $U,W$ with
  $|U|\le n/\ln^{1/2}{n}$ and $|W|\le |U|\cdot \ln^{1/4}{n}$, it holds that
  $\left|E_{\Gamma_*}(U,W)\right|\le 2|U|\ln^{0.9}{n}$.
\end{lemma}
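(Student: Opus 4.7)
The plan is to mirror the proof of \cref{spans_notmany}. Since $\Gamma_* \subseteq \Gamma_+$ \whp{} and $\Gamma_+ = \odd{\Gamma_+}\cup\even{\Gamma_+}$, it suffices to show that \whp{} each of $\bigl|E_{\odd{\Gamma_+}}(U,W)\bigr|$ and $\bigl|E_{\even{\Gamma_+}}(U,W)\bigr|$ is at most $|U|\ln^{0.9}n$ for every feasible pair $U,W$; summing the two bounds yields the desired total. The odd and even cases are symmetric, so I focus on the odd one.

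For fixed disjoint $U,W$ with $|U|=u$ and $|W|=w$, the quantity $\bigl|E_{\odd{\Gamma_+}}(U,W)\bigr|$ is distributed as $\bin(\lceil t_+/2\rceil, 2uw/n^2)$ (the success probability being the chance that a uniformly chosen directed edge of $K_n$ has one endpoint in each of $U$ and $W$), hence has mean $\sim uw\ln n/n$. \Cref{chernoff:trivial} then gives
\[
  \pr{\bigl|E_{\odd{\Gamma_+}}(U,W)\bigr|\ge u\ln^{0.9}n}
  \le \left(\frac{ew\ln^{0.1}n}{n}\right)^{u\ln^{0.9}n}.
\]
A union bound over all pairs, using $\binom{n}{u}\binom{n}{w}\le(en/u)^u(en/w)^w$, reduces the task to showing
\[
  S := \sum_{u=1}^{n/\ln^{1/2}n}\sum_{w=1}^{u\ln^{1/4}n}
  \left(\frac{en}{u}\right)^u\left(\frac{en}{w}\right)^w
  \left(\frac{ew\ln^{0.1}n}{n}\right)^{u\ln^{0.9}n}
  = o(1).
\]

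This estimate is the heart of the proof and the main obstacle. A direct differentiation (the $w$-derivative of the log-summand is $\ln(n/w)+u\ln^{0.9}n/w>0$) shows that the summand is strictly increasing in $w$, so the worst case is $w=u\ln^{1/4}n$. At that value, writing $L=\ln(n/u)$, the logarithm of the summand divided by $u$ simplifies to
\[
  L\bigl(1+\ln^{1/4}n-\ln^{0.9}n\bigr) + O\bigl(\ln^{0.9}n\ln\ln n\bigr),
\]
whose leading behaviour is $-L\ln^{0.9}n$. The delicate point is the competition between the combinatorial factor $w\ln(en/w)$, which contributes a positive multiple of $L$ with coefficient $\ln^{1/4}n$, and the Chernoff exponent, which contributes a negative multiple of $L$ with coefficient $\ln^{0.9}n$; the gap $\ln^{0.9}n\gg\ln^{1/4}n$ is what unlocks the argument. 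Since the hypothesis $u\le n/\ln^{1/2}n$ guarantees $L\ge\tfrac12\ln\ln n$, the entire expression is at most $-\tfrac1{10}\ln^{0.9}n\ln\ln n$ for large $n$, so each summand is bounded by $\exp(-\Omega(u\ln^{0.9}n\ln\ln n))$, which easily beats the $O(n^2)$ factor from the outer summation. As in \cref{spans_notmany}, one may cleanly split the sum at $u=\ln n$: for $u\le\ln n$ the stronger bound $L\ge\tfrac12\ln n$ yields a per-term bound of $\exp(-\Omega(u\ln^{1.9}n))$, making the two ranges straightforward to estimate separately.
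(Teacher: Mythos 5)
Your proposal follows essentially the same route as the paper's proof: decompose into odd and even traces, identify the binomial distribution of $\odd{e}(U,W)$ with parameters $\ceil{t_+/2}$ and $2uw/n^2$, apply \cref{chernoff:trivial}, union bound over $U,W$, observe the summand is worst at $w = u\ln^{1/4}n$, and close by splitting the resulting sum at $u = \ln n$. One small caveat: your intermediate assertion that the per-term bound $\exp(-\Omega(\ln^{0.9}n\ln\ln n))$ alone "easily beats the $O(n^2)$ factor" is not literally correct (since $\ln^{0.9}n\ln\ln n = o(\ln n)$), but this does not affect the argument because the split you then invoke — using $L \ge \tfrac12\ln n$ in the range $u\le\ln n$ and the $u\ge\ln n$ boost in the complementary range — supplies exactly the extra leverage needed, just as in the paper.
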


\begin{proof}
  For $U,W\subseteq[n]$, $|U|\le n/\ln^{1/2}{n}$, $|W|\le |U|\ln^{1/4}{n}$,
  let $\odd{e}(U,W)$ ($\even{e}(U,W)$) be the number of edges in
  $\odd{\Gamma}_+$ (in $\even{\Gamma}_+$) between $U$ and $W$.  For
  $\mathrm{x}\in\left\{\mathrm{o},\mathrm{e}\right\}$, let
  $A^\mathrm{x}(U,W)$ be the event ``$e^\mathrm{x}(U,W)\ge|U|\ln^{0.9}{n}$'',
  and let $A^\mathrm{x}$ be the event
  ``$\exists U,W,\ |U|\le n/\ln^{1/2}{n},\ |W|\le|U|\ln^{1/4}{n},\
  A^\mathrm{x}(U,W)$''.

  Fix $U,W$ with $|U|=u\le n/\ln^{1/2}{n}$ and $|W|=w\le u\ln^{1/4}{n}$.
  Note that $\odd{e}(U,W)$ is binomially distributed with $\ceil{t_+/2}$ trials
  and success probability $2uw/n^2$.  Thus, using
  \cref{chernoff:trivial} we have that
  \begin{equation*}
    \pr{\odd{e}(U,W) > u\ln^{0.9}{n}}
    \le
    \left(\frac{et_+uw}{n^2u\ln^{0.9}{n}}\right)^{u\ln^{0.9}{n}}
    \le
    \left(\frac{ew\ln^{0.1}{n}}{n}\right)^{\ln^{0.9}{n}}.
  \end{equation*}

  The union bound over all choices of $U,W$ yields
  \begin{align*}
    \pr{\odd{A}}
    &\le \sum_{u=1}^{n/\ln^{1/2}{n}} \sum_{w=1}^{u\ln^{1/4}{n}}
    \binom{n}{u}\binom{n}{w}
    \left(\frac{ew\ln^{0.1}{n}}{n}\right)^{\ln^{0.9}{n}}\\
    &\le \sum_{u=1}^{n/\ln^{1/2}{n}} \sum_{w=1}^{u\ln^{1/4}{n}}
    \left(
    \frac{en}{u}\left(\frac{en}{w}\right)^{w/u}
    \left(\frac{ew\ln^{0.1}{n}}{n}\right)^{\ln^{0.9}{n}}
    \right)^u\\
    &\le \sum_{u=1}^{n/\ln^{1/2}{n}} u\ln^{1/4}{n}
    \left(
    \frac{en}{u}\left(\frac{en}{u\ln^{1/4}{n}}\right)^{\ln^{1/4}{n}}
    \left(\frac{eu\ln^{0.35}{n}}{n}\right)^{\ln^{0.9}{n}}
    \right)^u\\
    &\le \sum_{u=1}^{n/\ln^{1/2}{n}} u\ln^{1/4}{n}
    \left(
    e\left(\frac{u}{n}\right)^{\ln^{0.9}{n}-\ln^{1/4}{n}-1}
    \left(e\ln^{0.35}{n}\right)^{\ln^{0.9}{n}}
    \left(e\ln^{-1/4}{n}\right)^{\ln^{1/4}{n}}
    \right)^u.
  \end{align*}

  We now split the sum into two:
  \begin{align*}
    &
    \sum_{u=1}^{\ln{n}} u\ln^{1/4}{n}
    \left(
    e\left(\frac{u}{n}\right)^{\ln^{0.9}{n}-\ln^{1/4}{n}-1}
    \left(e\ln^{0.35}{n}\right)^{\ln^{0.9}{n}}
    \left(e\ln^{-1/4}{n}\right)^{\ln^{1/4}{n}}
    \right)^u\\
    &\le
    \ln^{9/4}{n}
    \cdot e \left(\frac{\ln{n}}{n}\right)^{\ln^{0.9}{n}-\ln^{1/4}{n}-1}
    \left(e\ln^{0.35}{n}\right)^{\ln^{0.9}{n}}
    \left(e\ln^{-1/4}{n}\right)^{\ln^{1/4}{n}} = o(1),
  \end{align*}
  and
  \begin{align*}
    &
    \sum_{u=\ln{n}}^{n/\ln^{1/2}{n}} u\ln^{1/4}{n}
    \left(
    e\left(\frac{u}{n}\right)^{\ln^{0.9}{n}-\ln^{1/4}{n}-1}
    \left(e\ln^{0.35}{n}\right)^{\ln^{0.9}{n}}
    \left(e\ln^{-1/4}{n}\right)^{\ln^{1/4}{n}}
    \right)^u\\
    &\le
    n^2 \left(
    e\left(\frac{1}{\ln^{1/2}{n}}\right)^{\ln^{0.9}{n}-\ln^{1/4}{n}-1}
    \left(e\ln^{0.35}{n}\right)^{\ln^{0.9}{n}}
    \left(e\ln^{-1/4}{n}\right)^{\ln^{1/4}{n}}
    \right)^{\ln{n}} = o(1).
  \end{align*}

  As the same bound applies for $\mathrm{x}=\mathrm{e}$, the union bound over
  $\mathrm{x}\in\left\{\mathrm{o},\mathrm{e}\right\}$ concludes the claim
  (noting that $\Gamma_*\subseteq \Gamma_+$).
\end{proof}

We will need the following lemma, according to which not too many edges were
added by extending the trace, when we will prove the Hamiltonicity of the
trace:
\begin{lemma}\label{add_notmany}
  With high probability, $\left|E\left(\Gamma_*\right)\smallsetminus
  E\left(\odd{\Gamma_-}\right)\right| \le n^{0.4}$.
\end{lemma}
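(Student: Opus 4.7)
The plan is to bound the added edges directly by double-counting their incidences with the set $\Small$. By the construction of $\Gamma_*$, every edge of $E(\Gamma_*)\smallsetminus E(\odd{\Gamma_-})$ has at least one endpoint in $\Small$, so if I can control both $|\Small|$ and the maximum simple degree in $\Gamma_*$, the lemma follows almost immediately.

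First, I would recall that by \cref{small:small}, \whp{} $|\Small|\le n^{0.2}$, and by \cref{maxdeg}, \whp{} $\Delta(\Gamma_*)\le 6\ln{n}$. Assume both events hold. Observing the definition
\begin{equation*}
  \Gamma_* = \odd{\Gamma_-} + \left\{e_i\mid 1\le i\le \tau_C^k+1,\ e_i\cap\Small\ne\varnothing\right\},
\end{equation*}
every edge in the set difference $E(\Gamma_*)\smallsetminus E(\odd{\Gamma_-})$ is some $e_i$ with $e_i\cap\Small\ne\varnothing$, hence is incident to at least one vertex of $\Small$.

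Consequently, by counting incidences, the number of distinct such edges is at most
\begin{equation*}
  \sum_{v\in\Small} d'_{\Gamma_*}(v) \le |\Small|\cdot\Delta(\Gamma_*) \le n^{0.2}\cdot 6\ln{n} = o(n^{0.4}),
\end{equation*}
which is well below $n^{0.4}$. Combining with the two whp events from \cref{small:small,maxdeg} (and, if one wishes to reason in terms of multiplicities rather than distinct edges, the fact from \cref{maxmul} that each multiplicity in $\Gamma_+$ is at most $4$), the lemma follows.

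The only subtle point, and the thing to double check carefully rather than a real obstacle, is that $\Delta(\Gamma_*)$ indeed refers to the simple degree in $\Gamma_*$ itself (not in $\odd{\Gamma_-}$), so applying \cref{maxdeg} really does control the number of distinct edges of $\Gamma_*$ at each vertex of $\Small$. Everything else is a one-line union bound.
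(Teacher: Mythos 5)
Your proof is correct and follows essentially the same route as the paper's: you combine \cref{small:small} (for $|\Small|\le n^{0.2}$) and \cref{maxdeg} (for $\Delta(\Gamma_*)\le 6\ln n$), and pick up the factor of $4$ from \cref{maxmul} when passing from simple degrees to edge counts with multiplicity. The paper's proof is identical in structure and ingredients, bounding the edge count by $n^{0.2}\cdot 24\ln n < n^{0.4}$.
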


\begin{proof}
  Recall from \cref{small:small} that \whp{}
  $\left|\Small\right|\le n^{0.2}$. From \cref{maxdeg} it follows that \whp{}
  $\Delta\left(\Gamma_*\right)\le 6\ln{n}$. From \cref{maxmul} it follows
  that \whp{} $d_{\Gamma_*}(v)\le 4\Delta\left(\Gamma_*\right)\le 24\ln{n}$ for
  every $v\in\Small$.  We conclude that the number of edges in $\Gamma_*$ with
  at least one end in $\Small$ is \whp{} at most $n^{0.2}\cdot24\ln{n} <
  n^{0.4}$, and the claim follows by the definition of $\Gamma_*$.
\end{proof}

\subsubsection{Sparsifying the extension}
We may use the results of
\cref{uvo,mindeg,maxdeg,short_paths,spans_notmany,cross_notmany} to show that
$\Gamma_*$ is a (very) good expander.  This, together with \cref{rc_lemma},
will imply that $\Gamma_*$ is $2k$-connected.  However, in order to later show
that $\Gamma_*$ is Hamiltonian, we wish to show it contains a much
\emph{sparser} expander, which is still good enough to guarantee high
connectivity.

To obtain this, we assume $\Gamma_*$ has the properties guaranteed by these
lemmas, and sparsify $\Gamma_*$ randomly as follows: for each vertex $v$, if
$v\in\Small$, define $E(v)$ to be all edges incident to $v$; otherwise let
$E(v)$ be a uniformly chosen subset of size $d_0$ of all edges incident to
$v$. Let $\Gamma_0$ be the spanning subgraph of $\Gamma_*$ whose edge set is
the union of $E(v)$ over all vertices $v$.

\begin{lemma}\label{pairs}
  With high probability (over the choices of $E(v)$), for every pair of
  disjoint vertex sets $U,W\subseteq [n]$ of size $|U|=|W|=n/\ln^{1/2}{n}$,
  $\Gamma_0$ has at least one edge between $U$ and $W$.
\end{lemma}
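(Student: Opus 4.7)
The plan is to condition on the high-probability events established in the preceding lemmas and then exploit the fact that the sets $E(v)$ are mutually independent across distinct vertices $v$. Specifically, I assume that $\Gamma_*$ is such that $\odd{\Gamma_-}$ has at least $0.5n$ edges between any pair of disjoint sets of size $n/\ln^{1/2}{n}$ (Lemma \ref{uvo}), that $\Delta(\Gamma_*) \le 6\ln{n}$ (Lemma \ref{maxdeg}), and that $|\Small| \le n^{0.2}$ (Corollary \ref{small:small}).

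Fix disjoint sets $U, W\subseteq[n]$ of size $n/\ln^{1/2}{n}$. Writing $d_u = |E_{\odd{\Gamma_-}}(u,W)|$, we have $\sum_{u\in U} d_u \ge 0.5n$, and since the contribution of $\Small$ is at most $|\Small|\cdot\Delta(\Gamma_*) = O(n^{0.2}\ln{n}) = o(n)$, it follows that $\sum_{u\in U\setminus\Small} d_u \ge 0.4n$ for $n$ large. The key observation is that for $\Gamma_0$ to contain no edge between $U$ and $W$, each $u\in U\setminus\Small$ must have its random $d_0$-subset $E(u)$ miss \emph{all} $d_u$ edges from $u$ to $W$ in $\odd{\Gamma_-}$.

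For a single such $u$, since $E(u)$ is a uniform $d_0$-subset of the at most $6\ln{n}$ edges incident to $u$ in $\Gamma_*$, a standard estimate gives
\begin{equation*}
\pr{E(u)\cap E_{\odd{\Gamma_-}}(u,W)=\varnothing}
\le \left(1-\frac{d_u}{d'_{\Gamma_*}(u)}\right)^{d_0}
\le \exp\!\left(-\frac{\delta_0 d_u}{6}\right).
\end{equation*}
By independence of the $E(u)$'s across $u\in U\setminus\Small$, the probability that no edge of $\Gamma_0$ lies between $U$ and $W$ is at most
\begin{equation*}
\exp\!\left(-\frac{\delta_0}{6}\sum_{u\in U\setminus\Small} d_u\right)
\le \exp\!\left(-\frac{\delta_0 n}{15}\right),
\end{equation*}
which is $e^{-\Theta(n)}$.

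Finally, I would apply a union bound over the at most $\binom{n}{n/\ln^{1/2}{n}}^2 = \exp(O(n\lln{n}/\ln^{1/2}{n})) = \exp(o(n))$ pairs $(U,W)$, which easily absorbs the $e^{-\Theta(n)}$ factor to give $o(1)$. There is no real obstacle here: the only thing to double-check is that $d_0 = \floor{\delta_0\ln{n}} \le d'_{\Gamma_*}(u)$ for $u\notin\Small$ (which holds since $d'_{\Gamma_*}(u)\ge d_0$ by the very definition of $\Small$ applied to $\odd{\Gamma_-}\subseteq\Gamma_*$), and that the exponential gain $\delta_0 n/15$ dwarfs the entropy cost of choosing $(U,W)$, which it comfortably does.
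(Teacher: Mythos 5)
Your proof is correct and follows essentially the same route as the paper: fix $U,W$, invoke \cref{uvo} to get $0.5n$ edges of $\odd{\Gamma_-}$ between them, dispose of the $\Small$ contribution, bound the per-vertex probability that $E(u)$ misses all of $u$'s edges into $W$ (the paper uses the hypergeometric Chernoff bound, \cref{chernoff:hypg}, in place of your direct $(1-x)^{d_0}\le e^{-d_0 x}$ estimate, but the two are interchangeable here), multiply by independence, and union bound. One small slip: $E(u)$ is a uniform $d_0$-subset of the \emph{multi-}edges incident to $u$ in $\Gamma_*$, while \cref{maxdeg} bounds only the simple degree $d'_{\Gamma_*}$; you need \cref{maxmul} to upgrade this to $d_{\Gamma_*}(u)\le 24\ln n$, which just degrades your exponent constant from $\delta_0/6$ to roughly $\delta_0/48$ and changes nothing else. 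Also, the paper dispatches $\Small$ a touch more directly: if some $v\in U\cap\Small$ has an edge into $W$, that edge survives into $\Gamma_0$ automatically since $E(v)$ keeps all of $v$'s edges, so one may assume every one of the $0.5n$ cross edges leaves from $U\smallsetminus\Small$; your subtraction via $|\Small|\cdot\Delta$ reaches the same place.
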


\begin{proof}
  Let $U,W\subseteq [n]$ with $|U|=|W|=n/\ln^{1/2}{n}$.  From \cref{uvo}
  it follows that in $\Gamma=\odd{\Gamma_-}$ there are at least $0.5n$ edges
  between $U$ and $W$.  If there is a vertex $v\in U\cap\Small$ with an edge
  into $W$, we are done, so we can assume that there is no such.  Let
  $U'=U\smallsetminus\Small$; thus, $\left|E_\Gamma(U',W)\right|\ge 0.5n$.

  Fix a vertex $u\in U'$. Let $X_u$ be the number of edges between $u$ and
  $W$ in $\Gamma$ that fall into $E(u)$.  $X_u$ is a random variable,
  distributed according to
  $\hypg(d_\Gamma(u),\left|E_\Gamma(u,W)\right|,d_0)$.
  According to \cref{chernoff:hypg}, the probability that $X_u=0$ may
  be bounded from above by
  \begin{equation*}
    \exp\left(-\frac{\left|E_\Gamma(u,W)\right|\cdot d_0}{2d_\Gamma(u)}\right),
  \end{equation*}
  which, according to \cref{maxmul,maxdeg}, may be bounded from above by
  \begin{equation*}
    \exp\left(-\frac{\left|E_\Gamma(u,W)\right|\cdot d_0}{50\ln{n}}\right).
  \end{equation*}
  Hence, the probability that there is no vertex $u\in U$ from which there
  exists an edge to $W$ can be bounded from above by
  \begin{equation*}
    \prod_{u\in U'}
     \exp\left(-\frac{d_0}{50\ln{n}}\cdot \left|E_\Gamma(u,W)\right|\right)
    =\exp\left(-\frac{d_0}{50\ln{n}}\cdot \left|E_\Gamma(U',W)\right|\right)
    =\exp\left(-\Theta(n)\right).
  \end{equation*}
  Union bounding over all choices of $U,W$, we have that the probability that
  there exists such a pair of sets with no edge between them is at most
  \begin{equation*}
    \binom{n}{n/\ln^{1/2}{n}}^2e^{-\Theta(n)} \le
    \exp\left(\frac{n}{\ln^{1/2}{n}}(2+\lln{n})-\Theta(n)\right) = o(1).\qedhere
  \end{equation*}
\end{proof}

\begin{lemma}\label{mindeg0}
  $\delta\left(\Gamma_0\right)\ge 2k$.
\end{lemma}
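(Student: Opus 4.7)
The plan is to verify the bound $d'_{\Gamma_0}(v)\ge 2k$ pointwise, splitting on whether $v\in\Small$. The two cases are handled by completely different means: in the $\Small$ case the edges at $v$ were not subsampled at all, while in the non-$\Small$ case we have a large pool of $d_0=\Theta(\ln n)$ chosen edges and only need to convert edge-copies into distinct neighbors.

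For $v\in\Small$, I would observe that by construction $E(v)$ is the full set of edges of $\Gamma_*$ incident to $v$, hence $N_{\Gamma_0}(v)\supseteq N_{\Gamma_*}(v)$. Then \cref{mindeg} immediately gives $d'_{\Gamma_0}(v)\ge d'_{\Gamma_*}(v)\ge 2k$. This case is essentially free.

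For $v\notin\Small$, the defining inequality $d_{\odd{\Gamma_-}}(v)\ge d_0$ together with $\odd{\Gamma_-}\subseteq\Gamma_*$ makes the random choice of $E(v)$ well-defined. Now I would invoke \cref{maxmul}, which bounds the multiplicity of every edge of $\Gamma_+\supseteq\Gamma_*$ by $4$. Consequently each distinct neighbor $u\neq v$ contributes at most $4$ edge-copies to the multiset $E(v)$, and the single possible loop at $v$ contributes at most $4$ edge-copies as well. Hence $E(v)$ is supported on at least $(d_0-4)/4$ distinct non-loop neighbors in $\Gamma_0$, i.e.
\begin{equation*}
  d'_{\Gamma_0}(v)\ge \frac{d_0-4}{4}=\Omega(\ln n),
\end{equation*}
which exceeds $2k$ for $n$ large, since $k$ is a fixed constant and $d_0=\lfloor e^{-20}\ln n\rfloor$.

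The only "obstacle" here is a small accounting issue — making sure one properly separates edge-copies from distinct edges and correctly discounts the loop — but no further probabilistic input is needed beyond Lemmas~\ref{mindeg} and~\ref{maxmul}, both of which hold \whp. Assembling the two cases yields $\delta(\Gamma_0)\ge 2k$ \whp, finishing the proof.
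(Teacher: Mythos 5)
Your proof is correct and follows the same two-case split the paper uses: for $v\in\Small$ no incident edges are removed, so \cref{mindeg} carries over directly, while for $v\notin\Small$ you combine $|E(v)|=d_0$ with the multiplicity bound from \cref{maxmul} to convert edge-copies into at least $(d_0-4)/4\ge 2k$ distinct neighbours. The paper's published proof merely compresses the second case into a pointer back to \cref{mindeg} (whose own proof already performs the maxmul-based conversion, there phrased as $d'(v)\ge(d_0-8)/4$ because it works with multigraph degree rather than edge counts); the arithmetic differs by whether a loop is counted once as an edge or twice toward degree, but both bounds are $\Theta(\ln n)$ and dwarf $2k$.
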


\begin{proof}
  This follows from \cref{mindeg}, since we have not removed any
  edge incident to a vertex from $\Small$ and since any other vertex is
  incident to at least $d_0$ edges.
\end{proof}

Recall the definition of $(R,c)$-expanders from \cref{sec:RCexpanders}.

\begin{lemma}\label{gamma0}
  With high probability (over the choices of $E(v)$) $\Gamma_0$ is a
  $\left(\frac{n}{2k+2},2k\right)$-expander, with at most $d_0 n$ edges.
\end{lemma}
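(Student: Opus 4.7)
The plan is to separate the two claims of the lemma.

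First, the edge bound follows by direct counting. For each $v \notin \Small$ we have $|E(v)| = d_0$ by construction, while for each $v \in \Small$ we have $|E(v)| \le d_{\Gamma_*}(v) \le \Delta(\Gamma_*) \le 6\ln n$ by \cref{maxdeg}. Since $|\Small| \le n^{0.2}$ by \cref{small:small},
\begin{equation*}
|E(\Gamma_0)| \le \sum_{v\in[n]} |E(v)| \le (n-|\Small|)\,d_0 + |\Small|\cdot 6\ln n \le d_0 n,
\end{equation*}
the last inequality holding for $n$ large because $|\Small|\cdot 6\ln n = o(d_0 n)$.

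For the expansion, fix $U \subseteq [n]$ with $u := |U| \le n/(2k+2)$ and seek to show $|N_{\Gamma_0}(U)| \ge 2ku$. When $u \ge n/\ln^{1/2} n$, the argument is deterministic: if expansion fails, then $V' := [n] \setminus (U \cup N_{\Gamma_0}(U))$ has size $> n - (2k+1)u \ge n/(2k+2) \ge n/\ln^{1/2} n$, and picking subsets of $U$ and $V'$ of size $n/\ln^{1/2} n$ each produces a pair with no $\Gamma_0$-edge between them, contradicting \cref{pairs}.

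When $u < n/\ln^{1/2} n$, split $U = U_0 \sqcup U_1$ with $U_0 := U \cap \Small$ and $U_1 := U \setminus \Small$. If $U_1 = \varnothing$, then by \cref{short_paths} with $\ell = 2$ the vertices of $U_0 \subseteq \Small$ have pairwise-disjoint neighborhoods in $\Gamma_*$, all lying outside $\Small$, so $|N_{\Gamma_*}(U)| = \sum_{v\in U} d_{\Gamma_*}(v) \ge 2ku$ by \cref{mindeg}; because $E(v)$ is the full edge set at every $v \in \Small$, this bound transfers to $\Gamma_0$. If $U_1 \ne \varnothing$, the argument becomes probabilistic over the choices of $E(v)$, $v \in U_1$. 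For a candidate obstruction $W \subseteq [n]\setminus U$ with $|W| < 2ku$ (note $|W| \le u\ln^{1/4} n$ for $n$ large), the event $\{N_{\Gamma_0}(U) \subseteq W\}$ forces each $v \in U_1$ to have its $d_0$ sampled edges entirely inside $E_{\Gamma_*}(v, U \cup W)$; independently across $v$, this occurs with probability at most $\prod_{v \in U_1} (|E_{\Gamma_*}(v, U \cup W)|/d_{\Gamma_*}(v))^{d_0}$. Applying \cref{spans_notmany} and \cref{cross_notmany} yields
\begin{equation*}
\sum_{v \in U_1} |E_{\Gamma_*}(v, U \cup W)| \le 2|E_{\Gamma_*}(U)| + |E_{\Gamma_*}(U, W)| = O(u\ln^{0.9} n),
\end{equation*}
and an AM-GM argument converts this sum bound into a rapidly-decaying product bound. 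A union bound over $U$, $W$, and over the profile $u_1 \ge 1$ then gives a total failure probability of $o(1)$.

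The main obstacle is this final sub-case. The subtlety is that the hypothesis $|N_{\Gamma_0}(U)|<2ku$ itself forces $u_1$ to be very small relative to $u$ (via the very spans/cross estimates above), and one needs the union bound to stay small uniformly in that tight regime; the $\Small$-vertex structure (which contributes $2ku_0 - u_1$ neighbors deterministically) alone leaves a gap, so the probabilistic suppression of the $U_1$-edges must close it via the hypergeometric inequality $\binom{a}{d_0}/\binom{b}{d_0} \le (a/b)^{d_0}$ applied at the right scale.
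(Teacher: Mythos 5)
Your edge-count argument has a false last step: $(n-|\Small|)d_0 + |\Small|\cdot 6\ln n \le d_0 n$ is equivalent to $6\ln n \le d_0$, which fails because $d_0=\floor{\delta_0\ln n}$ with $\delta_0=e^{-20}$. What you actually get is $|E(\Gamma_0)| \le d_0 n + |\Small|\cdot 6\ln n = (1+o(1))d_0 n$. (The paper's own line ``by definition $|E(v)|\le d_0$ for every $v$'' has the same imprecision for $v\in\Small$; the slack is harmless downstream, where only $|E(H)|\le d_0 n + n$ is used in \cref{final}.)

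Your large-set case ($u\ge n/\ln^{1/2}n$ via \cref{pairs}) and the $U_1=\varnothing$ routine are sound and match the paper in substance.

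The genuine gap is the sub-case $U_1\ne\varnothing$, $u<n/\ln^{1/2}n$. Here you diverge from the paper and the divergence does not work. The paper's Case~2 is \emph{deterministic}: every $v\notin\Small$ has $d_{\Gamma_0}(v)\ge d_0$ by construction, so
\begin{equation*}
|\partial_{\Gamma_0}S_2| \;\ge\; d_0|S_2| - 2|E_{\Gamma_0}(S_2)| \;\ge\; |S_2|\bigl(d_0-4\ln^{3/4}n\bigr) \;>\; 3|S_2|\ln^{0.9}n
\end{equation*}
by \cref{spans_notmany}, and then \cref{cross_notmany} forces $|N_{\Gamma_0}(S_2)|>|S_2|\ln^{1/4}n$; combined with the pairwise-disjoint $\Small$-neighborhoods (\cref{short_paths}, \cref{mindeg0}) this gives $|N_{\Gamma_0}(S)|\ge 2k|S|$ with no further appeal to randomness in $E(v)$. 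In contrast, you propose a union bound over $U$, $W$ together with a probabilistic bound on $E(v)$, $v\in U_1$, and that calculation does not close when $u_1=|U_1|$ is small relative to $u_0=|U_0|$. Concretely: for $u_1=1$ and $u_0$ a positive fraction of $|\Small|$, there are $\exp(\Theta(|\Small|))$ choices of $U_0\subseteq\Small$, while your hypergeometric factor $\binom{a}{d_0}/\binom{b}{d_0}\le(a/b)^{d_0}$ with $a=|E_{\Gamma_*}(v,U\cup W)|$, $b=d_{\Gamma_*}(v)$ gives at best $(\delta_0/6)^{d_0}=n^{O(\delta_0)}$, which is only polynomial; moreover once $|U\cup W|$ exceeds $\ln^{1/4}n$ (which happens as soon as $u_0\gg \ln^{1/4}n$) you lose the single-vertex control of $a$ from \cref{cross_notmany}, and $a$ can approach $b$, killing the bound entirely. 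Your closing claim that the hypothesis ``forces $u_1$ to be very small relative to $u$'' is not established and, on inspection of the paper's accounting ($2k|S_1|-|S_2|$ from $\Small$, $>|S_2|\ln^{1/4}n$ from $S_2$), is also not needed: the hypothesis imposes no relation between $u_1$ and $u$. The probabilistic route you sketch would need a new idea; the deterministic degree-plus-spans/cross-notmany argument is the right one here, and it is what the paper does.
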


\begin{proof}
  Since by definition $|E(v)|\le d_0$ for every $v\in[n]$, it follows
  that $|E(\Gamma_0)|\le d_0 n$.
  Let $S\subseteq [n]$ with $|S|\le n/(2k+2)$. Denote $S_1=S\cap\Small$ and
  $S_2=S\smallsetminus\Small$. Consider each of the following cases:

  \begin{description}
    \item[Case $|S_2|\ge n/\ln^{1/2}{n}$:] From \cref{pairs} it
      follows that the set of all non-neighbours of $S_2$ (in $\Gamma_0$) is
      of cardinality less than $n/\ln^{1/2}{n}$.  Thus
      \begin{equation*}
        \left|N_{\Gamma_0}(S)\right|
        \ge n - n/\ln^{1/2}{n} - |S| \ge
        \frac{(2k+1)n}{2k+2} - n/\ln^{1/2}{n}
        \ge \frac{2kn}{2k+2} \ge 2k|S|.
      \end{equation*}

    \item[Case $|S_2| < n/\ln^{1/2}{n}$:] From \cref{mindeg0,short_paths}
      it follows that $\left|N_{\Gamma_0}(S_1)\right|\ge 2k|S_1|$.  From
      \cref{spans_notmany} it follows that $S_2$ spans at most
      $2|S_2|\cdot\ln^{3/4}{n}$ edges in $\Gamma_0$.  Consequently,
      \begin{equation*}
        \left|\partial_{\Gamma_0} S_2\right|
        \ge d_0|S_2| - 2\left|E_{\Gamma_0}(S_2)\right|
        > |S_2|(d_0-4\ln^{3/4}{n}) \ge 3|S_2|\cdot \ln^{0.9}{n},
      \end{equation*}
      hence, by \cref{cross_notmany} it holds that
      $\left|N_{\Gamma_0}(S_2)\right| > |S_2|\cdot \ln^{1/4}{n}$.
      Finally, by \cref{short_paths} we obtain that for each $u\in S_2$,
      $\left|N_{\Gamma_0}(S_1)\cap N_{\Gamma_0}^+(u)\right|\le 1$, hence
      \begin{equation*}
        \left|N_{\Gamma_0}(S_1) \cap N_{\Gamma_0}^+(S_2)
        \right| \le |S_2|,
      \end{equation*}
      and thus
      \begin{equation*}
        \left|N_{\Gamma_0}(S_1) \smallsetminus N_{\Gamma_0}^+(S_2) \right|
        \ge 2k|S_1|-|S_2|.
      \end{equation*}

      Similarly, for each vertex in $S_2$ has at most one neighbour in
      $S_1$, thus
      \begin{equation*}
        \left|N_{\Gamma_0}\left(S_2\right)\smallsetminus S_1\right|
        \ge \left|N_{\Gamma_0}\left(S_2\right)\right|-\left|S_2\right|
        > \left|S_2\right|\cdot\ln^{0.2}{n}.
      \end{equation*}

      To summarize, we have that
      \begin{align*}
        \left|N_{\Gamma_0}(S)\right|
        &= \left|N_{\Gamma_0}(S_1) \smallsetminus N_{\Gamma_0}^+(S_2) \right|
        + \left|N_{\Gamma_0}(S_2) \smallsetminus S_1 \right|\\
        &\ge 2k|S_1|-|S_2| + |S_2|\cdot \ln^{0.2}{n}\\
        &\ge 2k\left(|S_1|+|S_2|\right) = 2k|S|.\qedhere
      \end{align*}
  \end{description}

\end{proof}

Since $\Gamma_0$ is \whp{} an $(R,c)$-expander (with
$R(c+1)=\frac{n(2k+1)}{2k+2}\ge\frac{n}{2}+k$), we have that $\Gamma_*$ is
such, and from \cref{rc_lemma} we conclude it is
$2k$-vertex-connected. \Cref{kn_conn_2} follows for even values of $m$.

We have already shown (in \cref{kn_conn_1}) that
$\tau_\delta^{2k-1}+1=\tau_C^k+1=\tau_\delta^{2k}$. Hence, using what we have
just shown we have that $\tau_\delta^{2k-1}+1=\tau_\kappa^{2k}$. Since
removing an edge may decrease connectivity by not more than 1, it follows
that $\tau_\delta^{2k-1}\ge \tau_\kappa^{2k-1}$.

That concludes the proof of \cref{kn_conn_2} and of \cref{kn_conn}.

\subsection{Hamiltonicity}
We start by describing the tools needed for our proof.
\begin{definition}\label{booster}
  Given a graph $G$, a non-edge $e=\left\{u,v\right\}$ of $G$ is called a
  \emph{booster} if adding $e$ to $G$ creates a graph $\tilde{G}$, which is
  either Hamiltonian or whose maximum path is longer than that of $G$.
\end{definition}

Note that technically every non-edge of a Hamiltonian graph $G$ is a booster
by definition.
Adding a booster advances a non-Hamiltonian graphs towards Hamiltonicity.
Sequentially adding $n$ boosters makes any graph with $n$ vertices Hamiltonian.

\begin{lemma}\label{posa}
  Let $G$ be a connected non-Hamiltonian $(R,2)$-expander. Then $G$ has at
  least $(R+1)^2/2$ boosters.
\end{lemma}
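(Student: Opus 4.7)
The argument is a standard two-stage Pósa rotation--extension followed by a direct count of booster pairs.

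First, I fix a longest path $P = v_0 v_1 \ldots v_\ell$ in $G$; since $P$ is longest, every neighbour of an endpoint of $P$ lies in $V(P)$. Holding $v_\ell$ fixed, I perform Pósa rotations iteratively: from the current left endpoint $u$, for each edge $uw \in E(G)$ with $w$ an internal vertex of the current path, I replace the path by the one obtained by rerouting through $uw$, whose new left endpoint is the neighbour of $w$ on the previous path that lies on the $u$-side. Let $A$ be the set of all vertices arising as such a left endpoint; then $A \subseteq V(P)$ and every $a \in A$ is the first vertex of some longest path $P_a$ in $G$ ending at $v_\ell$ and spanning $V(P)$. Pósa's rotation lemma yields a strict bound $\left|N_G(A)\right| < 2|A|$ (in the external-neighbourhood convention used here), because every external neighbour of $A$ lies on $V(P)$ and its position is controlled by a $2$-to-$1$ map to the endpoint set $A$, with the boundary position at $v_\ell$ absorbing the slack. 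Combined with the $(R,2)$-expansion of $G$, this forces $|A| \ge R+1$: if $|A| \le R$ then $\left|N_G(A)\right| \ge 2|A|$, contradicting the strict Pósa bound.

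Next, for each $a \in A$ I repeat the procedure on $P_a$, now keeping $a$ fixed and rotating at the other endpoint, to obtain a set $B_a$ of reachable second endpoints; the identical argument gives $|B_a| \ge R+1$. Each pair $(a,b)$ with $a \in A$ and $b \in B_a$ then corresponds to a longest path $P_{a,b}$ of $G$ whose endpoints are $a$ and $b$ and whose vertex set is $V(P)$.

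I claim that for every such pair, $\{a,b\}$ is a non-edge of $G$ and is a booster. Suppose first $\{a,b\} \in E(G)$: then $P_{a,b} + \{a,b\}$ is a cycle $C$ on $V(P)$. If $V(P) \neq V(G)$, connectedness of $G$ supplies a vertex $u \notin V(P)$ with a neighbour on $C$, and breaking $C$ at that neighbour produces a path in $G$ longer than $P$, contradicting the choice of $P$. If $V(P) = V(G)$, then $C$ is a Hamilton cycle of $G$, contradicting non-Hamiltonicity. Hence $\{a,b\} \notin E(G)$. Moreover, adding $\{a,b\}$ to $G$ produces the same cycle $C$ in $G + \{a,b\}$, and the same case analysis shows that $G + \{a,b\}$ either contains a path strictly longer than $P_{a,b}$ or is Hamiltonian; thus $\{a,b\}$ is a booster.

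Finally, the number of ordered pairs $(a,b)$ with $a \in A$ and $b \in B_a$ is at least $(R+1)^2$, and each unordered non-edge $\{a,b\}$ is counted at most twice (once as $(a,b)$ and possibly once as $(b,a)$), so the number of distinct booster non-edges is at least $(R+1)^2/2$. The main obstacle in this plan is the careful derivation of the strict inequality $\left|N_G(A)\right| < 2|A|$ from the rotation structure (and likewise for each $B_a$): this is exactly what allows the mild $(R,2)$-expansion hypothesis, rather than a stronger $(R,c)$-expansion with $c>2$, to force $|A|, |B_a| \ge R+1$.
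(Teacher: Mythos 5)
Your proof is correct and is essentially the standard Pósa rotation--extension argument that the paper cites from \bollobas{}'s book (Chapter 8.2) rather than proving in-house; the two key facts you use are exactly those in the standard treatment: the Pósa lemma giving $|N_G(A)| \le 2|A|-1$, which together with $(R,2)$-expansion forces $|A|, |B_a| \ge R+1$, and the $2$-to-$1$ overcounting bound converting $(R+1)^2$ ordered pairs into $(R+1)^2/2$ booster non-edges.
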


The above is a fairly standard tool in Hamiltonicity arguments for random
graphs, based on the P\'osa rotation-extension technique~\cite{P76}. Its proof
can be found, e.g., in~\cite{B01}*{Chapter 8.2}.

We have proved in \cref{gamma0}, for $k=1$, that $\Gamma_*$ (and thus
$\Gamma_{\tau_C+1}$) typically contains a sparse
$\left(\frac{n}{4},2\right)$-expander $\Gamma_0$. We can obviously assume that
$\Gamma_0$ does not contain loops or multiple edges. Expanders are not
necessarily Hamiltonian themselves, but they are extremely helpful in
reaching Hamiltonicity as there are many boosters relative to them by
\cref{posa}. We will thus start with $\Gamma_0$ and will repeatedly add
boosters to it to bring it to Hamiltonicity.  Note that those boosters should
come from within the edges of the trace $\Gamma_{\tau_C+1}$. This is taken care
of by the following lemma.

\begin{lemma}\label{final}
  With high probability, every non-Hamiltonian
    $\left(\frac{n}{4},2\right)$-expander $H\subseteq\Gamma_*$ with
    $\left|E(H)\right|\le d_0n+n$ and
    $\left|E(H)\smallsetminus E(\odd{\Gamma_-})\right|\le n^{0.4}$
    has a booster in $\odd{\Gamma_-}$.
\end{lemma}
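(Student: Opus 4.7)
The plan is to union bound over all potential counterexamples. Since imposing $H\subseteq\Gamma_*$ only strengthens the bad event, it suffices to show that with high probability no simple graph $H$ on $[n]$ satisfies: $H$ is a non-Hamiltonian $(n/4,2)$-expander; $|E(H)|\le(d_0+1)n$; $|E(H)\setminus E(\odd{\Gamma_-})|\le n^{0.4}$; and $B(H)\cap E(\odd{\Gamma_-})=\varnothing$, where $B(H)$ denotes the booster set of $H$. \cref{posa} then gives $|B(H)|\ge(n/4+1)^2/2\ge n^2/32$.

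I would work in the independent-draws model: $\odd{\Gamma_-}\sim\hat G(n,m)$ with $m=\lceil t_-/2\rceil\sim\tfrac12 n\ln n$ is formed by $m$ i.i.d.\ uniformly random directed edges of $K_n$. Fixing disjoint edge sets $H_1,H_2$ with $|H_1|+|H_2|\le(d_0+1)n$, $|H_2|\le n^{0.4}$, and $H=H_1\cup H_2$, I would bound the joint event by assigning $|H_1|$ of the $m$ draws (in an ordered way) to cover the edges of $H_1$ (at cost $(2/n^2)^{|H_1|}$ times $\binom{m}{|H_1|}|H_1|!\le m^{|H_1|}$) and forcing the remaining draws to avoid $B(H)$:
\begin{equation*}
\pr{H_1\subseteq\odd{\Gamma_-},\,B(H)\cap\odd{\Gamma_-}=\varnothing}\le\left(\frac{2m}{n^2}\right)^{|H_1|}\!\left(1-\frac{2|B(H)|}{n^2}\right)^{m-|H_1|},
\end{equation*}
which simplifies to at most $(\ln n/n)^{|H_1|}\exp(-n\ln n/32\cdot(1-o(1)))$ using $|B(H)|\ge n^2/32$ and $|H_1|\ll m$. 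The union bound then runs over $H$ (at most $\binom{\binom n2}{(d_0+1)n}$ graphs) and, per $H$, over $H_2\subseteq E(H)$ with $|H_2|\le n^{0.4}$ (at most $|E(H)|^{n^{0.4}}$ possibilities). Writing $h=|E(H)|$, the logarithm of a typical summand is at most $h\ln(en^2/2h)+|H_1|\ln(\ln n/n)+O(n^{0.4}\ln n)-n\ln n/32$. The first two terms combine to $h\ln(en\ln n/2h)$, which for $h\le(d_0+1)n\sim\delta_0 n\ln n$ with $\delta_0=e^{-20}$ is at most $h\ln(1/\delta_0)(1+o(1))=20\delta_0\,n\ln n(1+o(1))$. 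Since $20 e^{-20}\ll 1/32$, the booster-avoidance term dominates, the total is $-\Omega(n\ln n)$, and the union bound is $o(1)$.

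The main obstacle is the sharpness of this arithmetic: the entropy $h\ln(n^2/h)$ of choosing $H$ almost exactly cancels the concentration cost $|H_1|\ln(\ln n/n)$ of requiring $H_1\subseteq\odd{\Gamma_-}$, leaving only the sub-leading residual $h\ln(1/\delta_0)$ to be beaten by the booster-avoidance exponent $\sim n\ln n/32$. This is precisely the design reason behind the very small constant $\delta_0=e^{-20}$ in the definition of $d_0$ (and hence of $\Small$), and why \cref{smalldeg} was calibrated accordingly. All lower-order contributions --- the $H_2$ enumeration, Stirling-type corrections, and the $-|H_2|\ln(\ln n/n)$ adjustment --- total $O(n^{0.4}\ln n)$ and are comfortably absorbed in the slack.
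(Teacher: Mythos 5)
Your proof is correct and follows essentially the same route as the paper's: union bound over the candidate expanders $H$ and over the part $H_2 = H\smallsetminus\odd{\Gamma_-}$, pay the entropy of choosing $H$ plus the cost of forcing $H_1=H\smallsetminus H_2$ into $\odd{\Gamma_-}$, and beat the residual with the booster-avoidance exponent from \cref{posa}, the whole thing engineered to close because $\delta_0=e^{-20}$ is so small. The one place you differ — and it is a genuine simplification — is that you work directly in the $\hat G(n,m)$ description of $\odd{\Gamma_-}$, where the $m$ odd draws are i.i.d.\ uniform directed edges, so you never have to worry about correlations between consecutive steps; the paper instead defines $T$ as all times at which edges of $H$ (including the extension edges of $H_{\mathrm e}$, which may sit at even times) were traversed, excludes odd times adjacent to $T$ via $I_T$, and thereby keeps only $|I_T|\ge t_-/3$ independent draws for the booster-avoidance product, losing a constant in the exponent (and later absorbing a spurious $n^{\sqrt n}$). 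Your variant is cleaner and slightly sharper since you retain $m-|H_1|\sim t_-/2$ draws; both yield $o(1)$ with ample room. The only caveat worth stating explicitly, which you implicitly use, is that $\cref{posa}$ requires $H$ to be connected — this follows from $H$ being an $(n/4,2)$-expander via \cref{rc_lemma}.
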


\begin{proof}
  For a non-Hamiltonian $\left(\frac{n}{4},2\right)$-expander $H$ let
  $H_{\mathrm{o}}=H\cap \odd{\Gamma_-}$ and
  $H_{\mathrm{e}}=H\smallsetminus H_{\mathrm{o}}$
  be two subgraphs of $H$.  Denote by $\mathcal{B}(H)$ the set of
  boosters with respect to $H$.  At the first stage we choose $H$. For
  that, we first choose how many edges $H$ has (at most $d_0n+n$) and call
  that quantity $i$, then we choose the edges from $K_n$.  At the second stage we choose $H_{\mathrm{e}}$.  For that, we
  first
  choose how many of $H$'s edges are not in $\odd{\Gamma_-}$ (at most $n^{0.4}$)
  and call that quantity $j$, then we choose the edges from $H$.  At the third stage, we require all of $H_{\mathrm{o}}$'s edges
  to
  appear in $\odd{\Gamma_-}$.  For that, we first choose for each edge of
  $H_{\mathrm{o}}$
  a time in which it was traversed, then we actually require that edge to be
  traversed at that time.

  Finally, we wish to bound the probability that given all of the above
  choices, $\odd{\Gamma_-}$ does not contain a booster with respect to $H$. For
  that, recall the definition of $t_-=t_-^{(1)}$ from \eqref{t_-}, let $T$ be the set of
  times in which edges from $H$ were traversed, and as in the proof of
  \cref{short_paths}, define
  \begin{equation*}
    I_T = \left\{i\in[t_-] \mid
      i\text{ is odd},\ \nexists s\in T:\ |i-s|\le 1\right\}.
  \end{equation*}
  Note that in view of \cref{maxmul}, \whp{}
  \begin{equation*}
    \left|I_T\right| \ge \frac{t_-}{2} - 4|E(H)| \ge
    \frac{t_-}{2}-5d_0n \ge \frac{t_-}{3},
  \end{equation*}
  since $\delta_0<1/1000$,
  and observe that every edge traversed in $\odd{\Gamma_-}$ at one of the times
  in $I_T$ is chosen uniformly at random, and independently of all previous
  choices, from all $n^2$ possible directed edges (including loops), so the
  probability of hitting a given (undirected) edge is $2n^{-2}$.  Since
  $H$ is a $\left(\frac{n}{4},2\right)$-expander, it is connected, hence by
  \cref{posa}, $\left|\mathcal{B}(H)\right|\ge n^2/32$, and it follows
  that for $t\in I_T$,
  \begin{equation*}
    \pr{e_t\in\mathcal{B}(H)}\ge \frac{n^2}{32}\cdot 2n^{-2}
    = \frac{1}{16},
  \end{equation*}
  hence for every $H$
  \begin{equation*}
    \prod_{t\in I_T}\pr{e_t\notin \mathcal{B}(H)}
    \le \left(\frac{15}{16}\right)^{t_-/3}.
  \end{equation*}
  To summarize,
  \begin{align*}
    &
    \pr{\exists H:\ \mathcal{B}(H)\cap
    E\left(\odd{\Gamma_-}\right) = \varnothing}
    \\
    &\le
    \sum_{i\le d_0n+n} \binom{\binom{n}{2}}{i}
    \sum_{j\le n^{0.4}}\binom{i}{j}
    \ceil{\frac{t_-}{2}}^{i-j} \left(\frac{2}{n^2}\right)^{i-j}
    \left(\frac{15}{16}\right)^{t_-/3}\\
    &\le
    \left(\frac{15}{16}\right)^{t_-/3}
    \sum_{i\le 2d_0n}\left(\frac{en^2}{2i}\right)^i
    \left[
    n^{0.4}\left(2d_0n\right)^{n^{0.4}}
    \left(\frac{n^2}{t_-}\right)^{n^{0.4}}\right]
    \left(\frac{2t_-}{n^2}\right)^i\\
    &\le
    \left(\frac{15}{16}\right)^{t_-/3}
    n^{\sqrt{n}}
    \sum_{i\le 2d_0n}
    \left(\frac{et_-}{i}\right)^i
    \\
    &\le
    \left(\frac{15}{16}\right)^{t_-/4}
    \sum_{i\le 2d_0n}\left(\frac{et_-}{i}\right)^i.
  \end{align*}
  Let $f(x) = (et_-/x)^x$.  In the interval $(0,et_-)$, $f$ attains its maximum
  at $t_-$, and is unimodal.  Recalling that $d_0=\floor{\delta_0\ln{n}}$
  and that we chose $\delta_0<1/1000$, $f$ is strictly increasing in the
  interval $(0,2d_0n)$.  Thus
  \begin{align*}
    \pr{\exists H:\ \mathcal{B}(H)\cap
    E\left(\odd{\Gamma_-}\right) = \varnothing}
    &\le \left(\frac{15}{16}\right)^{t_-/4}
    2d_0n\left(\frac{et_-}{2d_0n}\right)^{2d_0n}\\
    &\le
    \exp\left(\frac{t_-}{4}\ln\left(\frac{15}{16}\right)
      + 2d_0n\ln\left(\frac{e}{\delta_0}\right)\right)\\
    &\le \exp\left(n\ln{n}\left(
    \frac{\ln(15/16)}{4}
    + 2\delta_0\ln\left(\frac{e}{\delta_0}\right)\right)\right),\\
    &\le \exp\left(n\ln{n}\left(
    -\frac{1}{64} + 42e^{-20}\right)\right) = o(1).\qedhere
  \end{align*}
\end{proof}


Now all ingredients are in place for our final argument. We first state that
\whp{} the graph $\Gamma_*$ contains a sparse
$\left(\frac{n}{4},2\right)$-expander $\Gamma_0$, as delivered by
\cref{gamma0}. We set $H_0=\Gamma_0$, and as long as $H_i$ is not
Hamiltonian, we seek a booster from $\odd{\Gamma_-}$ relative to it; once
such a booster $b$ is found, we add it to the graph and set $H_{i+1}=H_i+b$.
This iteration is repeated less than $n$ times. It cannot get stuck as
otherwise we would get graph $H_i$ for which the following hold:
\begin{itemize}
  \item $H_i$ is a non-Hamiltonian $\left(\frac{n}{4},2\right)$-expander (as
    $H_0\subseteq H_i)$
  \item $\left|E\left(H_i\right)\right|\le d_0 n + n$ (as
    $\left|E\left(\Gamma_0\right)\right|\le d_0 n$)
  \item $\left|E\left(H_i\right)\smallsetminus
    E\left(\odd{\Gamma_-}\right)\right| \le n^{0.4}$ (follows from
    \cref{add_notmany})
  \item $\odd{\Gamma_-}$ does not contain a booster with respect to $H_i$
\end{itemize}
and by \cref{final}, with high probability, such $H_i$ does not exist.

This shows that $\Gamma_{\tau_C+1}$ is \whp{} Hamiltonian; since
$\delta\left(\Gamma_{\tau_C}\right)=1$, $\tau_\mathcal{H}=\tau_C+1$, and the
proof of \cref{kn_ham} is complete.

\subsection{Perfect Matching}
Assume $n$ is even.  Since $\delta\left(\Gamma_{\tau_C-1}\right)=0$, in order
to prove \cref{kn_pm} it suffices to show that $\tau_\mathcal{PM}\le
\tau_C$. Indeed, our proof above shows that \whp{} $\Gamma_{\tau_C}$ contains
a Hamilton path.  Taking every second edge of that path, including the last
edge, yields a matching of size $n/2$, thus \whp{} $\Gamma_{\tau_C}$ contains
a matching of that size, and \cref{kn_pm} follows.

\section{Concluding remarks}\label{sec:concluding}
We have investigated several interesting graph properties (minimum degree,
vertex-connectivity, Hamiltonicity) of the trace of a long-enough random walk
on a dense-enough random graph, showing that in the relevant regimes, the
trace behaves much like a random graph with a similar density.  In the
special case of a complete graph, we have shown a hitting time result, which
is similar to standard results about random graph processes.

However, the two models are, in some aspects, very different.  For example,
an elementary result from random graphs states that the threshold for the
appearance of a vertex of degree $2$ is $n^{-3/2}$, whereas the expected
density of the trace of the walk on $K_n$, at the moment the maximum degree
reaches $2$, is of order $n^{-2}$ (as it typically happens after two steps).
It is therefore natural to ask for which graph properties and in which regimes
the two models are alike.

Further natural questions inspired by our results include asking for the
properties of the trace of the walk in different random environments, such as
random regular graphs, or in deterministic environments, such as
$(n,d,\lambda)$-graphs and other pseudo-random graphs (see~\cite{KS06} for a
survey).  We have decided to leave these questions for a future research.

A different direction would be to study the directed trace.  Consider the set
of \emph{directed} edges traversed by the random walk. This induces a random
directed (multi)graph, and we may ask, for example: is it true that when
walking on the complete graph, typically one step after covering the graph we
achieve a \emph{directed} Hamilton cycle?

\begin{acknowledgement}
  The authors wish to thank Wojciech Samotij and Yinon Spinka for useful
  discussions, and the anonymous referee for careful reading and valuable
  comments and suggestions.
\end{acknowledgement}

\newpage

\bibliography{library}
\end{document}